\def \c{\mathbb{C}}
\def \z{\mathbb{Z}}
\def \r{\mathbb{R}}
\def \n{\mathbb{N}}
\def \p{\mathbb{P}}
\def \K{\mathcal{K}}
\def \.{\cdot}
\def \Div{\textup{Div}}
\def \deg{\textup{deg}}
\def \dim{\textup{dim}}
\def \Pic{\textup{Pic}}
\def \Vol{\textup{Vol}}
\def \supp{\textup{supp}}
\def \Ord{\textup{Ord}}
\def \ord{\textup{ord}}
\def \K{{\bf K}_{rat}(X)}
\def \qp{quasi-projective }
\def \ratmap{\dashrightarrow}
\theoremstyle{plain}
\newtheorem{Th}{Theorem}[section]
\newtheorem{Lem}[Th]{Lemma}
\newtheorem{Prop}[Th]{Proposition}
\newtheorem{Cor}[Th]{Corollary}
\theoremstyle{definition}
\newtheorem{Ex}[Th]{Example}
\newtheorem{Def}[Th]{Definition}
\newtheorem{Rem}[Th]{Remark}
\begin{document}
\title{Mixed volume and an extension of intersection theory of divisors}
\author{Kiumars Kaveh, A. G. Khovanskii\\Department of Mathematics\\
University of Toronto}
\address{Kiumars Kaveh: Department of Mathematics, University of Toronto, Toronto,
Canada,}
\email{kaveh@math.toronto.edu}

\address{A. G. Khovanskii: Department of Mathematics, Toronto University, Toronto,
Canada; Moscow Independent Univarsity; Institute for Systems Analysis, Russian Academy of Sciences,}
\email{askold@math.toronto.edu}
\thanks{The second author is partially supported by Canadian Grant {\rm N~156833-02}.}
\maketitle

\begin{abstract}
Let $\K$ be the collection  of all non-zero finite
dimensional subspaces of rational functions on an $n$-dimensional irreducible
variety $X$. For any $n$-tuple  $L_1, \ldots, L_n\in \K $, we define
an intersection index $[L_1, \ldots, L_n]$
as the number of solutions in $X$ of a system of equations $f_1 =
\cdots = f_n = 0$ where each $f_i$ is a generic function from the
space $L_i$. In counting the solutions, we neglect the solutions $x$ at
which all the functions in some space $L_i$ vanish as well as the
solutions at which at least one function from some subspace $L_i$ has a
pole. The collection $\K$  is a commutative semigroup
with respect to a natural multiplication. The intersection index
$[L_1, \ldots, L_n]$ can be extended to the Grothendieck group
of $\K$. This gives an extension of the intersection
theory of divisors. The extended theory is applicable even to non-complete varieties.  We show that this
intersection index enjoys all the main properties of the mixed volume
of convex bodies. Our paper is inspired by the Bernstein-Ku\v{s}nirenko
theorem from the Newton polytope theory.
\end{abstract}

\tableofcontents

\noindent{\it Key words:} System of algebraic equations, mixed volume
of convex bodies, Bernstein-Ku\v{s}nirenko theorem, linear system on
a variety, Cartier divisor, intersection index.\\

\noindent{\it AMS subject classification:} 14C20, 52A39\\

\section{Introduction} \label{sec-intro}
The present paper is the first in a series of three papers in which
we extend the well-known Bernstein-Ku\v{s}nirenko theorem to a
far more general setting. It is a revised and expanded version of the first half
of the preprint \cite{Askold-Kiumars-arXiv}. In this introduction we
discuss the content of the paper as well as a brief discussion of
the two coming papers.

The Bernstein-Ku\v{s}nirenko theorem computes the number of
solutions in $(\c^*)^n$ of a system of equations $P_1 = \cdots = P_n
= 0$ where each $P_i$ is a generic function from a fixed finite
dimensional vector space of functions spanned by monomials. The
answer is given in terms of the mixed volumes of the Newton
polytopes of the equations in the system (Section \ref{subsec-B-K}).

In this paper, instead of $(\c^*)^n$ we take any irreducible $n$-dimensional
complex algebraic variety $X$ \footnote{Throughout the paper,
all the varieties are assumed to be over complex numbers, and a variety is not automatically
assumed to be irreducible.}, and instead of a space of
functions spanned by monomials, we consider any non-zero finite dimensional
vector space of rational functions on $X$. For any $n$-tuple of such
finite dimensional spaces $L_1, \ldots, L_n$ of rational functions
on $X$ we define an {\it intersection index} $[L_1, \ldots, L_n]$ as the
number of solutions in $X$ of a system of equations $f_1 = \cdots =
f_n = 0$, where each $f_i$ is a generic function from the space
$L_i$. In counting the solutions, we neglect the solutions $x$ at which
all the functions in some space $L_i$ vanish, as well as the solutions at
which at least one function from some space $L_i$ has a pole.
As this intersection index only depends on the birational type of $X$,
we will also refer to it as the {\it birationally invariant intersection index}.

Let $\K$ denote the collection of all the non-zero finite
dimensional subspaces of rational functions on $X$. The set $\K$ has a natural
multiplication: product $L_1L_2$ of two
subspaces $L_1, L_2 \in \K$ is the subspace spanned
by all the products $fg$ where $f \in L_1$, $g \in L_2$. With
this multiplication, $\K$ is a commutative
semigroup. As with any other commutative semigroup, there
corresponds a Grothendieck group to the semigroup $\K$ \footnote{For a commutative semigroup $K$, the
Grothendieck group $G(K)$ is defined as follows: for $x, y \in K$ let us say $x \sim y$ if there
is $z \in K$ with $xz=yz$. Then $G(K)$ consists of
all the formal quotients $x/y$, $x,y \in K$, where we identify $x/y$ and $z/w$ if
$xw = zy$. One has a natural homomorphism $\phi: K \to G(K)$ with the following
universal property: for any abelian group $G'$ and a homomorphism $\phi':K
\to G'$, there exists a unique homomorphism $\psi: G(K) \to G'$ such that
$\phi' = \psi \circ \phi$ (Section \ref{sec-Grothendieck-gp-Cartier}).}. We
prove that the intersection index $[L_1, \ldots, L_n]$ is linear
in each argument with respect to the multiplication in $\K$, and hence can be
extended to the Grothendieck group of $\K$.

With each space $L \in \K$, one associates a rational {\it Kodaira
map} $\Phi_L: X \ratmap \p(L^*)$, the projectivization of the dual space
$L^*$. Let $x \in X$ be such that all the $f \in L$ are defined at $x$ and not all of them
vanish at $x$. To such $x$ there corresponds a non-zero functional in $L^*$ which
evaluates $f \in L$ at $x$. The Kodaira map sends $x$ to the image of this functional in $\p(L^*)$.
It is a rational map, i.e. defined on a Zariski open subset in $X$. The collection of subspaces in
$\K$ for which the Kodaira map extends to a regular map everywhere on $X$, is a
subsemigroup of $\K$ which we denote by ${\bf K}_{Cart}(X)$.

We show that if $X$ is an irreducible normal, projective variety then the Grothendieck group of
${\bf K}_{Cart}(X)$ is naturally isomorphic to the group of Cartier
divisors on $X$ (Section \ref{sec-Grothendieck-gp-Cartier}). Under this isomorphism the
intersection index of subspaces corresponds to the intersection
index of Cartier divisors. Hence the intersection index of subspaces
introduced in the present paper can be considered as an extension
of the intersection theory of Cartier divisors to
general varieties. For a (not necessarily complete) variety $X$, the
Grothendieck group of the semigroup $\K$, equipped with the
intersection index, can be identified with the direct limit of
groups of Cartier divisors on all the complete birational models of $X$.
(Such limit is studied from an algebraic view point in \cite{French-paper}.)

The semigroup $\K$, equipped with the intersection index, has an
analogue in convex geometry. This analogy is more evident in the
toric case considered by Bernstein and Ku\v{s}nirenko. They
deal with $X= (\c^*)^n$ and the subsemigroup ${\bf K}_{mon} \subset \K$
consisting of the finite dimensional subspaces spanned by
monomials. Each monomial $z^k = z_1^{k_1} \cdots z_n^{k_n}$ on $(\c^*)^n$ corresponds to
a point $k = (k_1, \ldots, k_n)$ in the lattice $\z^n$ (where $z_1, \ldots, z_n$ are
the coordinates on $(\c^*)^n$).
A finite subset $A \subset \z^n$ gives
a subspace $L_A$ in ${\bf K}_{mon}$, namely, $L_A$ is the subspace
spanned by the monomials corresponding to the points in $A$.
It is easy to see that $L_{A+B}
= L_A L_B$ and thus the semigroup ${\bf K}_{mon}$ is isomorphic to
the semigroup of finite subsets of $\z^n$ with respect to the
addition of subsets.

Let $\Delta_1, \ldots, \Delta_n$ be the convex hulls of finite
subsets $A_1, \ldots, A_n \subset \z^n$ respectively.
According to the Bernstein-Ku\v{s}nirenko theorem, the intersection index
$[L_{A_1}, \ldots, L_{A_n}]$ is equal to $n!$ times the mixed
volume of the convex polytopes $\Delta_1, \ldots, \Delta_n$ (when all
the polytopes are the same this was proved by Ku\v{s}nirenko
\cite{Kushnirenko}, and the general case is due to Bernstein
\cite{Bernstein}). Thus the intersection index in the semigroup
${\bf K}_{mon}$ has all the properties of the mixed volume of convex
bodies. In the present paper, we prove that in general for any irreducible
variety $X$, the intersection index in the
semigroup $\K$ also enjoys all the main properties of the mixed
volume. These are listed in Section \ref{sec-mixed-volume-properties}.

The semigroup of finite subsets in $\z^n$ does not have the cancellation
property: the equality $A+C = B+C$ does not imply $A = B$. But the
semigroup of convex bodies in $\r^n$ does have the cancellation
property. In fact, the Grothendieck group of the semigroup of
finite subsets in $\z^n$ is isomorphic to the group of {\it virtual}
integral polytopes in $\r^n$, i.e. the formal differences of
integral convex polytopes (see \cite{Askold-Hilbert-poly}).
The isomorphism is induced by the map which
sends a finite subset $A \subset \z^n$ to its convex hull $\Delta(A)$.
For a given subset $A \subset \z^n$, among the sets $B \subset \z^n$ with $\Delta(A) =
\Delta(B)$, there is a biggest set $\overline{A} = \Delta(A) \cap \z^n$.
The Bernstein-Ku\v{s}nirenko theorem implies
that the space $L_{\overline{A}}$
has the same intersection indices as the subspace $L_A$.

In the semigroup $\K$ of any irreducible variety $X$,
there is a similar operation to taking convex hull. Let us say that
spaces $L_1, L_2 \in \K$ are equivalent if there is $M \in \K$ with
$L_1M = L_2M$. One can show that among all the subspaces equivalent to a
given subspace $L$, there is a biggest subspace $\overline{L} \in \K$.
This subspace $\overline{L}$ is the {\it integral closure} or {\it completion} of $L$: it
consists of all the rational functions which are integral over $L$
(Section \ref{subsec-Grothendieck-group}). The subspace $\overline{L}$ in $\K$ has the same
intersection indices as $L$ (Section \ref{sec-int-index-rational}).

To show the connection with the classical intersection index in
topology and algebraic geometry, in Section \ref{sec-appendix} we give alternative proofs of
the main properties of the intersection index (of subspaces of rational
functions) using the usual topological and algebro-geometric
techniques. In particular, we prove an algebraic analogue of
the Alexandrov-Fenchel inequality from convex geometry using Hodge
theory.

The algebraic analogue of Alexandrov-Fenchel inequality for the
intersection index of Cartier divisors has been proved in
\cite{Burago-Zalgaller} and \cite{Teissier}. Our proof of the
analogous inequality for the intersection index in $\K$ is close to the
proofs in the above papers.

The authors have found surprisingly simple arguments which simultaneously
prove the Alexandrov-Fenchel inequality in convex geometry as well as its
algebraic analogue. These arguments are discussed in detail in a second paper
\cite{Askold-Kiumars-Newton-Okounkov} (preliminary version appeared in \cite{Askold-Kiumars-arXiv}).
The main construction there, corresponds to any subspace $L \in \K$ a convex
body $\Delta(L) \subset \r^n$. The body $\Delta(L)$ is a far-reaching
generalization of the Newton polytope (of a Laurent polynomial). The construction of the body
$\Delta(L)$ depends on a choice of a $\z^n$-valued valuation on the
field of rational functions on the variety $X$.
We show that $n!$ times the volume of $\Delta(L)$ is
equal to the self-intersection index $[L, \ldots, L]$ (see
\cite[Part IV]{Askold-Kiumars-Newton-Okounkov}). The proof is
based on the description of the asymptotic behavior of semigroups
in $\z^{n+1}$. This result is a direct generalization of the
Ku\v{s}nirenko theorem. Unfortunately, in general the identity
$\Delta(L_1) + \Delta(L_2) = \Delta(L_1L_2)$ does not hold, and hence
we do not get a generalization of the Bernstein
theorem. Instead of equality, we have the inclusion $\Delta(L_1) +
\Delta(L_2) \subset \Delta(L_1L_2)$. This inclusion enables us to
give a simple proof of the Alexandrov-Fenchel inequality and its
algebraic analogue.

In a third paper (in preparation) we address the
case when the variety $X$ is equipped with an action of a reductive
algebraic group $G$. We generalize the Bernstein theorem to certain
spherical $G$-varieties. The generalization of the Ku\v{s}nirenko theorem to projective spherical varieties
is due to M. Brion \cite{Brion} (see also \cite{Kazarnovskii} and \cite{Kiritchenko}).
But the Bernstein theorem can be extended only to certain classes of spherical varieties.

After the submission of \cite{Askold-Kiumars-arXiv} to arXiv,
we learned that we were not the only ones to have been working in this direction. Firstly, Okounkov was the first to
define (in passing) a generalization of the notion of Newton polytope
(\cite{Okounkov-Brunn-Minkowski, Okounkov-log-concave}). Although
his case of interest was when $X$ has a reductive group action.
Secondly, Lazarsfeld and Mustata, based on Okounkov's previous
works, and independently of our preprint, came up with closely
related constructions and results \cite{Lazarsfeld-Mustata}. Recently, following
\cite{Lazarsfeld-Mustata}, similar results/constructions have been
obtained for line bundles on arithmetic surfaces \cite{Yuan}.

And a few words about location of the material: the main properties
of mixed volume are recalled in Section \ref{sec-mixed-volume-properties}. In Section
\ref{subsec-B-K} the Bernstein-Ku\v{s}nirenko theorem is discussed and in Section \ref{sec-B-K-rational}
we give a straightforward generalization of this theorem to
rational functions on $(\c^*)^n$. The intersection index for finite
dimensional subspaces of regular functions without common zeros and
on a smooth variety is defined in Section \ref{sec-int-index-regular} and its main
properties are established.
Some of the properties, e.g. multi-linearity
and invariance under the addition of integral elements, are deduced from
the special case when $X$ is a curve. The Alexandrov-Fenchel type
inequality is deduced from the special case when $X$ is a surface.
In Section \ref{sec-int-index-rational} we show that all the results
in Section \ref{sec-int-index-regular} for subspaces of regular functions easily
generalize to the finite dimensional subspaces of rational functions
(possibly with common zeros and poles) on a (possibly non-smooth)
variety $X$. In Section \ref{subsec-regular-Kodaira} we show that for a projective variety
$X$, the Grothendieck group of the subsemigroup ${\bf K}_{Cart}(X)$
is isomorphic to the group of Cartier divisors on $X$ preserving the
intersection index. Finally in Section \ref{sec-appendix} we give proofs of the main
properties of the intersection index using usual techniques from
topology and algebraic geometry.

\section{Motivation and preliminaries} \label{sec-motiv-prelim}
\subsection{Mixed volume of convex bodies and its properties}
\label{sec-mixed-volume-properties} On the space of convex bodies
\footnote{By a convex body in $\r^n$ we mean a compact
convex subset of $\r^n$.} in $\r^n$ one has the following two operations:\\

\noindent 1. (Minkowski sum of convex bodies): For
any two subsets $A, B \subset \r^n$, let $A+B$ denote the set
consisting of all the points $a+b$, $a\in A$, $b \in B$. With this
summation, the collection of subsets of $\r^n$ becomes a commutative
semigroup. In general, this semigroup does not have the cancellation
property: if for subsets $A, B, C$ we have $A+C = B+C$, it does not necessarily
imply that $A=B$. The subset $\{0\}$, is the identity element for
the summation of subsets, i.e. for any $A \subset \r^n$, we have $A+\{0\} =
A$. For two convex bodies $\Delta_1$, $\Delta_2$ in $\r^n$, the set
$\Delta_1 + \Delta_2$ is a convex body, called the {\it Minkowski sum
of} $\Delta_1$ and $\Delta_2$. With this summation, the collection
of convex bodies becomes a commutative semigroup with the cancellation
property. As for any commutative semigroup with the cancellation
property, the collection of formal differences $\Delta_1 - \Delta_2$,
forms an abelian group, where two differences $ \Delta_1
-\Delta_2$ and $\Delta_3 -\Delta_4$ are considered equal if $\Delta_1 +\Delta_4=
\Delta_2 +\Delta_3$. This group is called the group of {\it virtual convex bodies}.\\

\noindent 2. (Multiplication of a convex body by a non-negative
scalar): For a convex body $\Delta$ and $\lambda \geq 0$, the set
$\{ \lambda a \mid a \in \Delta \}$ is a convex body denoted by
$\lambda \Delta$. This scalar multiplication can be extended to
the virtual convex bodies and makes the group of virtual convex bodies
into a real (infinite dimensional) vector space.

\begin{Rem}
It is important in the above definition of scalar multiplication (for
convex bodies) to assume that $\lambda$ is non-negative. If we define
$\lambda \Delta$ for any real $\lambda$ as above, then it will not
agree with the Minkowski sum in a good way. For example, if $\Delta$
contains more than one point and $\lambda > 0$ then $\lambda \Delta
+ (-\lambda)\Delta$ contains more than one point (i.e. is not zero).
\end{Rem}

Convex bodies form a convex cone in the vector space of virtual
convex bodies. \footnote{A cone in a (real) vector space is a subset which is closed under
the addition and multiplication by non-negative scalars.}
On this cone there is a volume function $\Vol$ which
assigns to each convex body $\Delta$, its volume $\Vol(\Delta)$ with
respect to the standard Euclidean measure in $\r^n$ . The function
$\Vol$ is a homogeneous polynomial of degree $n$ on the cone of
convex bodies. That is, its restriction to each finite dimensional
section of the cone is a homogeneous polynomial of degree $n$. More
precisely: let $\r_+^k$ be the positive octant in $\r^k$ consisting
of all ${\bf \lambda} = (\lambda_1,\dots,\lambda_k)$ with
$\lambda_1\geq 0,\dots,\lambda_k\geq 0$. The polynomiality of
$\Vol$ means that for any choice of the convex bodies
$\Delta_1,\dots,\Delta _k$, the function
$P_{\Delta_1,\dots,\Delta_k}$ defined on $\r^k_{+}$ by
$$P_{\Delta_1,\dots,\Delta_k}(\lambda_1,\dots,\lambda_k)=\Vol(\lambda_1
\Delta_1+\dots+
\lambda_k\Delta_k),$$ is a homogeneous polynomial of degree $n$.

By definition the {\it mixed volume of} $V(\Delta_1,\dots,\Delta_n)$
of an $n$-tuple $(\Delta_1,\dots,\Delta_n)$ of convex bodies in $\r^n$ is the
coefficient of the monomial $\lambda_1\dots\lambda_n$ in the
polynomial $P_{\Delta_1,\dots,\Delta_n}$ divided by $n!$.

This definition implies that the mixed volume is the {\it polarization} of
the volume polynomial, that is, it is the unique function on the $n$-tuples
of convex bodies which satisfies the following:
\begin{itemize}
\item[(i)] (Symmetry) $V$ is symmetric with respect to permuting the
bodies $\Delta_1, \ldots, \Delta_n$.
\item[(ii)] (Multi-linearity) It is linear in each argument with
respect to the Minkowski sum. The linearity in the first argument
means that for any choice of convex bodies $\Delta_1'$, $\Delta_1''$,
$\Delta_2,\dots,\Delta_n$ and for any $\lambda_1\geq 0,\lambda_2\geq 0$, we have:
$$ V(\lambda_1\Delta_1'+\lambda_2 \Delta_1'',\dots, \Delta_n)=\lambda_1V(\Delta_1',\dots,
\Delta_n)+\lambda_2V(\Delta_1'',\dots, \Delta_n).$$

\item[(iii)] On the diagonal $V$ coincides with the volume, i.e. for any
convex body $\Delta$ we have $V(\Delta,\dots,
\Delta)=\Vol(\Delta)$.
\end{itemize}

The above three properties characterize the mixed volume: it is the
unique function satisfying (i)-(iii).

By the multi-linearity, the volume and mixed volume functions can be
extended to the vector space of virtual convex bodies.

There are many interesting geometric inequalities known concerning
the mixed volume. The following two inequalities are easy to verify:

\noindent 1) Mixed volume is non-negative, that is, for any $n$-tuple
$\Delta_1, \dots, \Delta_n$ of convex bodies, we have
$$V(\Delta_1,\dots, \Delta_n)\geq 0.$$

\noindent 2) Mixed volume is monotone, that is, for two $n$-tuples
of convex bodies $\Delta'_1\subset \Delta_1,\dots, \Delta'_n\subset
\Delta_n$ we have $$V(\Delta_1,\dots, \Delta_n)\geq
V(\Delta'_1,\dots, \Delta'_n).$$

The next inequality is far more complicated. It is known as the {\it Alexandrov-Fenchel inequality}.

\noindent 3) For any $n$-tuple of convex bodies
$\Delta_1,\dots,\Delta_n \subset \r^n$ one has:
$$V(\Delta_1,\Delta_2, \Delta_3\dots,\Delta_n)^2\geq V(\Delta_1,\Delta_1,
\Delta_3\dots,\Delta_n)V(\Delta_2,\Delta_2,\Delta_3\dots,\Delta_n).$$

Below we mention some formal corollaries of the Alexandrov-Fenchel inequality.
Let us introduce a notation for the repetition
of convex bodies in the mixed volume. Let $2\leq m\leq n$
be an integer and $k_1+\dots+k_r=m$ a partition of $m$ with $k_i \in \n$.
Denote by $V(k_1*\Delta_1,\dots, k_r*\Delta_r,\Delta_{m+1},\dots,\Delta_n)$
the mixed volume of $\Delta_1, \ldots, \Delta_n$, where
$\Delta_1$ is repeated $k_1$ times, $\Delta_2$ is repeated $k_2$ times, etc.
and $\Delta_{m+1},\dots,\Delta_n$ appear once.\\

\noindent 4) With the notation as above, the following inequality holds:
$$V(k_1*\Delta_1,\dots,
k_r*\Delta_r,\Delta_{m+1},\dots,\Delta_n)^m\geq \prod\limits _{1\leq j
\leq r}  V(m*\Delta_j,\Delta_{m+1}\dots,\Delta_n)^{k_j}.$$

\noindent 5) (Generalized Brunn-Minkowski inequality)
Fix convex bodies $\Delta_{m+1},\dots, \Delta_n$. Then
the function $F$ which assigns to a body $\Delta$, the number
$F(\Delta) = V^{1/m}(m*\Delta,\Delta_{m+1},\dots,\Delta_n),$
is concave, i.e. for any two convex bodies $\Delta_1, \Delta_2$ we have
$$F(\Delta_1)+F(\Delta_2)\leq F(\Delta_1+\Delta_2).$$

The $n=m$ case of the above Property (5) was discovered by H. Brunn at the end of the 19th
century. It is called the {\it Brunn-Minkowski inequality}. This inequality was
discovered before the Alexandrov-Fenchel inequality, and its proof is much
simpler than that of the Alexandrov-Fenchel.

\subsection{Mixed volume and the Bernstein-Ku\v{s}nirenko theorem} \label{subsec-B-K}
The beautiful Bernstein-Ku\v{s}nirenko theorem computes the number
of solutions in $(\c^*)^n$ of a sufficiently general system of $n$
equations $P_1=\dots=P_n=0$, where the $P_i$ are Laurent
polynomials, in terms of the Newton polytopes of these polynomials.
In this section we discuss this theorem.

Let us identify the lattice $\z^n$ with {\it Laurent monomials} in
$(\c^*)^n$: to each integral point $k=(k_1, \ldots, k_n) \in \z^n$,
we associate the monomial $z^k=z_1^{k_1}\dots
z_n^{k_n}$. A {\it Laurent polynomial} $P=\sum c_kz^k$ is a finite
linear combination of Laurent monomials with complex coefficients.
The {\it support} $\supp(P)$ of a Laurent polynomial $P$, is the set
of exponents $k$ for which $c_k \neq 0$. We denote the convex hull
of a finite subset $A\subset \z^n$ by $\Delta_A \subset \r^n$.
The {\it Newton polytope} $\Delta (P)$ of a Laurent polynomial $P$
is the convex hull $\Delta_{\supp(P)}$ of its support. To each non-empty
finite subset $A\subset \z^n$ one can associate a finite dimensional vector space
$L_A$ consisting of the Laurent polynomials $P$ with $\supp(P) \subset A$.

\begin{Def}
We say that a property holds for a {\it generic element} of a vector
space $L$, if there is a proper algebraic set $\Sigma$ such that the
property holds for all the elements in $L \setminus \Sigma$.
\end{Def}

\noindent {\bf Problem:} {\it For a given $n$-tuple of non-empty finite subsets
$A_1,\dots,A_n \subset \z^n$, find the number $[L_{A_1}\dots,
L_{A_n}]$ of solutions in $(\c^*)^n$ of a generic system of
equations $P_1=\dots=P_n=0$, where $P_1\in L_1,\dots, P_n\in L_n$
(i.e. find a formula for the number of solutions of a generic
element $(P_1, \ldots, P_n) \in L_{A_1} \times\dots\times
L_{A_n}$)}.

When the convex hulls of the sets $A_i$ are the same and
equal to a polytope $\Delta$, the problem was solved by A. G. Ku\v{s}nirenko (see \cite{Kushnirenko}). He
showed that, in this case, we have
$$[L_{A_1}, \dots,
L_{A_n}]=n! \Vol(\Delta),$$ where $\Vol$ is the standard
$n$-dimensional volume in $\r^n$. In other words, if  $P_1,
\dots,P_n$ are sufficiently general Laurent polynomials with given
Newton polytope $\Delta$, the number of solutions in $(\c^*)^n$
of the system $P_1=\dots=P_n=0$ is equal to $n!\Vol(\Delta)$.

When the convex hulls of the sets $A_i$ are not necessarily the same,
the answer was given by D. Bernstein (see \cite{Bernstein}).
He showed that:
$$[L_{A_1}, \dots,
L_{A_n}]=n! V(\Delta_{A_1}, \ldots, \Delta_{A_n}),$$ where $V$ is the
mixed volume of convex bodies in $\r^n$. In other words, if  $P_1,
\dots,P_n$ are sufficiently general Laurent polynomials with Newton
polytopes $\Delta_1, \ldots, \Delta_n$ respectively, the number of
solutions in $(\Bbb C^*)^n$ of the system $P_1=\dots=P_n=0$ is equal
to $n!V(\Delta_1, \ldots, \Delta_n)$.

Next, let us discuss the Bernstein-Ku\v{s}nirenko theorem from a point of
view which later will allow generalizations.
For two non-zero finite dimensional subspaces $L_1$,$L_2$ of regular
functions in $(\Bbb C^*)^n$, define the product $L_1L_2$ as
the subspace spanned by the products $fg$, where $f\in L_1$, $g\in
L_2$. Clearly the multiplication of monomials corresponds to the addition
of their exponents, i.e. $z^{k}z^{\ell}=z^{k+\ell}.$ This implies
that $L_{A_1}L_{A_2}=L_{A_1+A_2}$.

Among the subspaces of regular functions in $(\c^*)^n$, there is a
natural family consisting of
subspaces stable under the action of the multiplicative
group $(\c^*)^n$. It is easy to see that any such non-zero subspace is of the form $L_A$ for some non-empty
finite subset $A\subset \z^n$ of monomials.
As mentioned above, $L_AL_B=L_{A+B}$ which implies that the
collection of stable subspaces is closed under the product of subspaces.

For a finite set $A \subset \z^n$, let $\overline{A} = \Delta_A \cap \z^n$.
To a subspace $L_A$ we can associate the bigger subspace $L_{\overline{A}}$.
From the Bernstein-Ku\v{s}nirenko theorem it follows that
for any $(n-1)$-tuple of finite subsets $A_2, \ldots A_n \in \z^n$ we have
$$[L_A, L_{A_2}, \ldots, L_{A_n}] = [L_{\overline{A}}, L_{A_2}, \ldots, L_{A_n}].$$
That is, (surprisingly!) enlarging $L_A \mapsto L_{\overline{A}}$
does not change any of the intersection indices. In other words,
in counting the number of solutions of a system, instead of
the support of a polynomial, its convex hull plays the main role. We
denote the subspace $L_{\overline{A}}$ by $\overline{L_A}$, and call
it the {\it completion of $L_A$}.

Recall that the semigroup of convex bodies with Minkowski sum has the cancellation
property. It implies that the following cancellation property holds
for the finite subsets of $\z^n$: if for finite subsets $A, B, C \in
\z^n$ we have $\overline{\overline{A} + \overline{C}} =
\overline{\overline{B} + \overline{C}}$ then $\overline{A} =
\overline{B}$. And the same cancellation property holds for the
corresponding semigroup of subspaces $L_A$. That is, if
$\overline{\overline{L_A} ~\overline{L_C}} =
\overline{\overline{L_B}~\overline{L_C}}$ then
$\overline{L_A} = \overline{L_B}$.\\

The Bernstein-Ku\v{s}nirenko theorem defines an intersection index
on $n$-tuples of subspaces of type $L_A$ (for non-empty finite subsets $A
\subset \z^n$), and computes it in terms of the mixed volume of polytopes.
It follows that this intersection index should enjoy the same properties as the mixed volume,
namely: 1) positivity, 2)
monotonicity, 3) multi-linearity, 4) the Alexandrov-Fenchel inequality
and its corollaries. Moreover, we have 5) the subspaces $L_A$ and
$L_{\overline{A}}$ have the same intersection indices.

\section{Bernstein-Ku\v{s}nirenko theorem for rational functions and virtual polytopes}
\label{sec-B-K-rational}
Let $R = P/Q$ be a rational function on $(\c^*)^n$.

\begin{Def} We define the {\it virtual Newton polytope $\Delta_R$ of $R$} to be
$\Delta_P - \Delta_Q$, the formal difference of the Newton polytopes of
the polynomials $P$ and $Q$.
\end{Def}

The above definition is well-defined, that is if $P_1/Q_1=P_2/Q_2$ then
$\Delta_{P_1} + \Delta_{Q_2} = \Delta_{P_2} + \Delta_{Q_1}$ and hence
$\Delta_{P_1} - \Delta_{Q_1} = \Delta_{P_2} -\Delta_{Q_2}$.

Unlike the case of Laurent polynomials and Newton polytopes,
the family of rational functions with a given virtual polytope $\Delta
= \Delta' - \Delta''$ usually can not be described by finitely many
parameters.

\begin{Ex}
Let $n=1$ and take $\Delta = \{0\}$. For any $k>0$ take two
polynomials (in one variable) $P$ and $Q$ such that
$\deg(P)=\deg(Q)=k$ and $P(0) \neq 0$ and $Q(0) \neq 0$. Then the
Newton segments of $P$ and $Q$ are both equal to the segment $[0,k]$
and hence the rational function $R = P/Q$ has $\{0\}$ as its virtual
segment. For each $k$, the rational function $R=P/Q$ depends on
$2k+1$ parameters and $k$ can be chosen to be arbitrarily large.
\end{Ex}

\begin{Def}
For two non-empty finite subsets $A, B \subset \z^n$, let the formal quotient
$L_A/L_B$ denote the collection of all the rational functions $R=P/Q$
where $P, Q$ are Laurent polynomials with $P \in L_A$ and $Q \in
L_B$. We call $L_A/L_B$ a {\it virtual subspace}.
\end{Def}

Take finite subsets $A_i, B_i \subset \z^n$, $i=1, \ldots, n$.
We want to associate a number
$$[L_{A_1}/L_{B_1}, \ldots, L_{A_n}/L_{B_n}]$$ to the virtual subspaces
$L_{A_i}/L_{B_i}$ which computes the intersection number of the {\it principal divisors}
of generic functions $R_i \in L_{A_i}/L_{B_i}$:
take a partition of $\{1, \ldots, n\}$ into two subsets $I=\{i_1, \ldots,i_r\}$ and
$J=\{j_1,\ldots,j_s\}$.
To this partition associate the number
$$N(I) = [L_{A_{i_1}}, \ldots, L_{A_{i_r}}, L_{B_{j_1}}, \ldots, L_{B_{j_s}}].$$
That is, $N(I)$ is the number of solutions $x \in (\c^*)^n$ of a generic system
$P_{i_1} = \ldots = P_{i_r} = Q_{j_1} = \ldots = Q_{j_s} = 0$, with
$P_{i_k} \in L_{A_{i_k}}$ and $Q_{j_k} \in L_{B_{j_k}}$.

\begin{Def}
For virtual subspaces $L_{A_1}/L_{B_1}, \ldots, L_{A_n}/L_{B_n}$, define the
number $[L_{A_1}/L_{B_1}, \ldots, L_{A_n}/L_{B_n}]$ by
$$[L_{A_1}/L_{B_1}, \ldots, L_{A_n}/L_{B_n}] = \sum_{I \subset
\{1,\ldots,n\}} (-1)^{n-|I|}N(I).$$
\end{Def}

If $B_1 = \ldots = B_n = \{0\}$, then each virtual subspace
$L_{A_i}/L_{B_i}$ is equal to $L_{A_i}$ and the number
$[L_{A_1}/L_{B_1}, \ldots, L_{A_n}/L_{B_n}]$ is just the number of
solutions in $(\c^*)^n$ of a generic system $P_1 = \ldots = P_n = 0$
of Laurent polynomials $P_i \in L_{A_i}$. Unlike $[L_{A_1}, \ldots,
L_{A_n}]$, the number $[L_{A_1}/L_{B_1}, \ldots, L_{A_n}/L_{B_n}]$
can be negative. But it is still multi-linear, and symmetric i.e.
invariant under the permutation of its arguments.

The following theorem is the extension of Bernstein-Ku\v{s}nirenko theorem to
rational functions and virtual polytopes.
For each $i$, let
$\Delta_i$ denote the virtual polytope $\Delta_{A_i} - \Delta_{B_i}$.

\begin{Th}[Bernstein-K\v{u}snirenko for rational functions]
$$[L_{A_1}/L_{B_1}, \ldots, L_{A_n}/L_{B_n}] = n!V(\Delta_1, \ldots, \Delta_n).$$
\end{Th}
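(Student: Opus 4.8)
The plan is to reduce the statement to the ordinary Bernstein--Ku\v{s}nirenko theorem by expanding both sides into their defining sums and matching terms combinatorially. On the left-hand side, by definition
$$[L_{A_1}/L_{B_1}, \ldots, L_{A_n}/L_{B_n}] = \sum_{I \subset \{1,\ldots,n\}} (-1)^{n-|I|}N(I),$$
and each $N(I)$, being an honest intersection index of subspaces of the form $L_A$, equals $n!$ times a mixed volume of genuine polytopes by the classical Bernstein theorem:
$$N(I) = n!\,V\bigl(\Delta_{A_{i_1}}, \ldots, \Delta_{A_{i_r}}, \Delta_{B_{j_1}}, \ldots, \Delta_{B_{j_s}}\bigr),$$
where $I = \{i_1,\ldots,i_r\}$ and its complement $J = \{j_1,\ldots,j_s\}$. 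So the left side is $n!$ times $\sum_{I}(-1)^{n-|I|}V(\{\Delta_{A_i}\}_{i\in I}, \{\Delta_{B_j}\}_{j\notin I})$.

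First I would observe that mixed volume, being multilinear on the cone of convex bodies, extends uniquely to a symmetric multilinear function on the space of virtual convex bodies (this is recalled in Section~\ref{sec-mixed-volume-properties}). Hence $V(\Delta_1, \ldots, \Delta_n)$ with $\Delta_i = \Delta_{A_i} - \Delta_{B_i}$ makes sense, and by expanding each slot $\Delta_i = \Delta_{A_i} - \Delta_{B_i}$ using multilinearity, one gets
$$V(\Delta_1, \ldots, \Delta_n) = \sum_{I \subset \{1,\ldots,n\}} (-1)^{n-|I|} V\bigl(\{\Delta_{A_i}\}_{i\in I},\ \{\Delta_{B_j}\}_{j\in J}\bigr).$$
Here a slot indexed by $i\in I$ contributes $\Delta_{A_i}$ (the "$+$" term), a slot $j\in J$ contributes $-\Delta_{B_j}$, and pulling out the $|J| = n-|I|$ minus signs gives the sign $(-1)^{n-|I|}$; symmetry of $V$ lets us collect the arguments in any order. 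Comparing this term-by-term with the expansion of the left-hand side — the term for a given subset $I$ on the left is $(-1)^{n-|I|}N(I) = (-1)^{n-|I|} n!\, V(\{\Delta_{A_i}\}_{i\in I}, \{\Delta_{B_j}\}_{j\in J})$ — yields exactly $n!$ times the displayed expansion of $V(\Delta_1,\ldots,\Delta_n)$, which is what we want.

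The only genuinely substantive point — and thus the main obstacle — is justifying that the numbers $N(I)$ on the left are \emph{simultaneously} computed by the classical Bernstein formula \emph{and} that this is compatible with counting zeros of generic rational functions $R_i \in L_{A_i}/L_{B_i}$, i.e. that the definition of $[L_{A_1}/L_{B_1},\ldots]$ via inclusion-exclusion really does count (with sign) the intersection points of the principal divisors $(R_i)_0 - (R_i)_\infty$ on $(\c^*)^n$. I would handle this by noting that for a generic choice of $R_i = P_i/Q_i$, the zero set $\{R_i = 0\}$ in $(\c^*)^n$ is $\{P_i = 0\}\setminus\{Q_i = 0\}$ and the pole set is $\{Q_i = 0\}\setminus\{P_i = 0\}$; by genericity (Bernstein's theorem guarantees all the relevant intersections are transverse and avoid the "boundary" where some $P_i$ and $Q_i$ vanish together), an inclusion-exclusion over which of the $n$ conditions we read as "zero of $P$" versus "zero of $Q$" collapses to the signed sum $\sum_I (-1)^{n-|I|}N(I)$ — this is the standard way the intersection number of differences of effective divisors decomposes, and it matches the Definition preceding the theorem. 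Once this bookkeeping is in place, the proof is just the two multilinear expansions above matched side by side, with no further computation needed.
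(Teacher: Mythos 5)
Your proof is correct and follows essentially the same approach as the paper: expand the mixed volume multilinearly over the virtual bodies $\Delta_i = \Delta_{A_i} - \Delta_{B_i}$, apply the classical Bernstein--Ku\v{s}nirenko theorem termwise, and observe that the resulting signed sum matches the defining inclusion--exclusion formula for the left-hand side. Your final paragraph worries about a non-issue, however: the quantity $[L_{A_1}/L_{B_1}, \ldots, L_{A_n}/L_{B_n}]$ is \emph{defined} to be $\sum_{I}(-1)^{n-|I|}N(I)$ in the definition immediately preceding the theorem, so there is no separate claim about counting signed intersection points of principal divisors to verify --- that interpretation is motivational remarks, not part of the statement, and the theorem is a purely formal identity between two explicitly defined quantities which is settled once the two multilinear expansions are matched.
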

\begin{proof}
By multi-linearity of mixed volume for virtual convex bodies, we have:

\begin{eqnarray} \label{eqn-rational-Bernstein-Kushnirenko}
n!V(\Delta_1, \ldots, \Delta_n) &=& n!V(\Delta_{A_1} - \Delta_{B_1},
\ldots, \Delta_{A_n} - \Delta_{B_n}), \cr &=& n! \sum_{I \subset
\{1, \ldots, n\}} (-1)^{n-|I|} V(\Delta_{A_{i_1}}, \ldots,
\Delta_{A_{i_r}}, \Delta_{B_{j_1}}, \ldots, \Delta_{B_{j_s}}), \cr
\end{eqnarray}
where as above $I = \{i_1, \ldots, i_r\}$ and $J = \{j_1, \ldots,
j_s\} = \{1, \ldots, n\} \setminus I$. Now by the
Bernstein-Ku\v{s}nirenko theorem
$$n!V(\Delta_{A_{i_1}}, \ldots, \Delta_{A_{i_r}},
\Delta_{B_{j_1}}, \ldots, \Delta_{B_{j_s}}) =
[L_{A_{i_1}}, \ldots, L_{A_{i_r}}, L_{B_{j_1}}, \ldots, L_{B_{j_s}}],$$ and from
(\ref{eqn-rational-Bernstein-Kushnirenko}) we have
$$n!V(\Delta_1, \ldots, \Delta_n) = \sum_{I \subset \{1, \ldots, n\}} (-1)^{n-|I|}
[L_{A_{i_1}}, \ldots, L_{A_{i_r}}, L_{B_{j_1}}, \ldots,
L_{B_{j_s}}],$$ which is just the definition of $[L_{A_1}/L_{B_1},
\ldots, L_{A_n}/L_{B_n}]$.
\end{proof}

\section{An intersection index for subspaces of regular functions} \label{sec-int-index-regular}
\subsection{Semigroup of subspaces of an algebra of functions}  \label{subsec-1}
In this section we define the intersection index of a collection of
finite dimensional subspaces $L_1, \ldots, L_n$ of rational functions on an
$n$-dimensional variety $X$, and prove its main properties. Roughly
speaking, this intersection index is the number of solutions of a
generic system $f_1=\ldots=f_n$ where $f_i \in L_i$.

For convenience, we restrict ourselves to regular functions and
smooth varieties. Later, we will generalize the situation to rational
functions on arbitrary varieties.

We start with some general definitions.  {\it A set equipped with an algebra
of functions} is a set $X$, with an algebra $R(X)$ consisting of complex
valued functions on $X$, and containing all the constants. To a pair
$(X, R(X))$ one can associate the set  $VR(X)$ whose elements are
vector subspaces of $R(X)$.

Any subspace $L \in VR(X)$ gives rise to a natural map
$\tilde{\Phi}_L: X \to L^*$, where $L^*$ denotes the vector space
dual of $L$, as follows: for $x \in X$ let $\tilde{\Phi}_L(x) \in
L^*$ be the linear function defined by
$$\tilde{\Phi}_L(x)(f) = f(x),$$ for all $f \in L$.

There is a natural multiplication in $VR(X)$:

\begin{Def}
For any two subspaces $L_1, L_2\subset R(X)$ define the {\it product
$L_1L_2$} to be the linear span of the functions $fg$, where $f\in L_1$
and $g\in L_2$. With this product the set $VR(X)$ becomes a
commutative semigroup. The space $L_1L_2$ can be considered as a
factor of the tensor product $L_1 \otimes L_2$ and there is a
natural projection $\pi: L_1 \otimes L_2 \to L_1L_2$: for $v \in
L_1 \otimes L_2$ where $v = \sum_i f_i \otimes g_i$ with $f_i \in
L_1$, $g_i \in L_2$, define $\pi(v) = \sum_i f_ig_i$. The projection
$\pi$ is onto but can have non-zero kernel.
\end{Def}

Let us say that a subspace $L$ {\it has no base locus on $X$}, if
for each $x\in X$ there is a function $f\in L$ with $f(x)\neq
0$. The following is easy to verify:

\begin{Prop} \label{1.1}
Let $L_1,L_2 $ be vector subspaces of $R(X)$. If  $L_1, L_2 $ are
finite dimensional (respectively, if $L_1, L_2 $ have no
base locus on $X$), then the space $L_1L_2$ is finite dimensional
(respectively, has no base locus on $X$).
\end{Prop}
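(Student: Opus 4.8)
The plan is to prove the two assertions of Proposition \ref{1.1} separately, since each is a short direct verification from the definition of the product $L_1L_2$ as the linear span of products $fg$ with $f\in L_1$, $g\in L_2$.

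\textbf{Finite-dimensionality.} First I would pick bases: if $L_1$ and $L_2$ are finite dimensional, choose a basis $f_1, \ldots, f_p$ of $L_1$ and a basis $g_1, \ldots, g_q$ of $L_2$. Every element of $L_1$ is a linear combination of the $f_i$ and every element of $L_2$ a linear combination of the $g_j$, so every product $fg$ with $f\in L_1$, $g\in L_2$ is a linear combination of the $pq$ functions $f_ig_j$. Hence $L_1L_2$, being the linear span of all such $fg$, is contained in the span of the finitely many functions $\{f_ig_j\}_{1\le i\le p,\ 1\le j\le q}$, and is therefore finite dimensional (of dimension at most $pq$). This also matches the remark in the Definition that $L_1L_2$ is a quotient of $L_1\otimes L_2$ via $\pi$, and $\dim(L_1\otimes L_2)=pq$.

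\textbf{No base locus.} Now suppose $L_1$ and $L_2$ both have no base locus on $X$. Fix $x\in X$. By hypothesis there is $f\in L_1$ with $f(x)\ne 0$ and there is $g\in L_2$ with $g(x)\ne 0$. Then $fg\in L_1L_2$ and $(fg)(x)=f(x)g(x)\ne 0$. Since $x$ was arbitrary, $L_1L_2$ has no base locus on $X$.

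\textbf{Main obstacle.} Honestly there is no serious obstacle here: both parts are immediate once one unwinds the definitions, which is presumably why the paper labels this as ``easy to verify.'' The only point requiring a moment's care is noticing, in the first part, that one must span $L_1L_2$ by the products of \emph{basis} elements rather than trying to control all products $fg$ directly; and in the second part, that the product of two functions nonvanishing at $x$ is again nonvanishing at $x$ (using that $R(X)$ is an algebra of complex-valued functions, so there are no zero divisors at a point). Nothing else is needed, and both the finite-dimensional case and the base-locus case can be handled in the same few lines.
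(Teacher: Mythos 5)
Your proof is correct, and since the paper offers no proof beyond the remark that the proposition is ``easy to verify,'' your argument is exactly the standard verification one would supply: spanning $L_1L_2$ by the finitely many products of basis elements, and noting that the product of two functions nonvanishing at $x$ is nonvanishing at $x$. Nothing to add or compare.
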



According to Proposition \ref{1.1}, the finite dimensional
subspaces of $R(X)$ with no base locus, form a
semigroup in $VR(X)$ which we will denote by $KR(X)$.

Assume that $Y \subset X$ and that the restriction of every function
$f\in R(X)$ to $Y$ belongs to an algebra $R(Y)$.
We will denote the restriction of a subspace $L \subset R(X)$ to $Y$
again by $L$. Clearly, if  $L\in KR(X)$ then $L\in KR(Y)$.

In this paper we will not use general sets equipped with algebras of
functions. Instead, the following case plays the main role.
\begin{Def} Let $X$ be a variety
and let $R(X)=\mathcal{O}(X)$ be the algebra of regular functions on
$X$. In this case to simplify the notation we will not mention the
algebra $R(X)$ explicitly and the semigroup $KR(X)$ will be denoted by
${\bf K}_{reg}(X)$.
\end{Def}

As above any subspace $L \in {\bf K}_{reg}(X)$ gives a natural map
$\tilde{\Phi}_L: X \to L^*$. Since by assumption $L$ has no base
locus, for all $x \in X$, we have $\tilde{\Phi}_L(x) \neq 0$.
\begin{Def}[Kodaira map]
For any $x \in X$, let $\Phi_L(x)$ be the point in $\p(L^*)$ represented by
$\tilde{\Phi}_L(x) \in L^*$. We call $\Phi_L: X \to \p(L^*)$, the {\it Kodaira map of the subspace $L$}.
It is a morphism from $X$ to $\p(L^*)$.
\end{Def}

Fix a basis $\{f_1, \ldots f_d\}$ for $L$. One verifies that,
in the homogeneous coordinates in $\p(L^*)$ corresponding to the dual basis to the $f_i$,
the map $\Phi_L$ is given by $$\Phi_L(x) = (f_1(x) : \cdots: f_d(x)).$$

Finally let us define the notion of an integral element.
\begin{Def} \label{def-integral-L}
Let us say that a regular function  $f \in \mathcal{O}(X)$ is {\it
integral over a subspace $ L\in {\bf K}_{reg}(X)$}, if $f$ satisfies
an equation $$f^m+a_1f^{m-1}+\dots +a_m=0,$$ where  $m>0$ and $a_i
\in L^i$, for each $i=1,\ldots, m$.
\end{Def}

\subsection{Preliminaries on algebraic varieties} \label{subsec-2}
In this section, we discuss some well-known results which we need to define an intersection
index in the semigroup ${\bf K}_{reg}(X)$. We will need the particular cases of the
following results: 1) An algebraic variety has a {\it finite}
topology. 2) There are finitely many topologically different
varieties in an algebraic family of algebraic varieties. 3) In such a family
the set of parameters, for which the corresponding members have the same topology,
is a complex semi-algebraic subset in the  space of parameters.
4) A complex semi-algebraic subset in a vector space either covers almost
all of the space, or covers only a very small part of it.

Let us give precise statements of these results and their
particular cases we will use. Let $X$, $Y$ be algebraic varieties and let
$\pi: X \to Y$ be a morphism. Consider the family of
algebraic varieties  $X_y=\pi^{-1}(y)$ parameterized by the points $y\in
Y$. The following is well-known.

\begin{Th} \label{th-strat}
Each variety $X_{y}$ has the homotopy type of a
finite $CW$-complex. There is a finite  stratification of the
variety $Y$ into complex semi-algebraic strata $Y_{\alpha}$, such
that for any two points $y_1$, $y_2$ in the same stratum $Y_{\alpha}$,
the varieties $X_{y_1}$, $X_{y_2},$ are homeomorphic. (In particular, in
the family $X_y$ there are only finitely many topologically
different varieties.)
\end{Th}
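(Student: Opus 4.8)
The statement to be proved (Theorem~\ref{th-strat}) is a standard result in algebraic geometry, so the plan is to reduce it to well-known foundational theorems rather than to reprove those. The two ingredients are: (a) every complex algebraic variety has the homotopy type of a finite $CW$-complex, and (b) a morphism $\pi: X \to Y$ can be ``trivialized'' over the strata of a suitable stratification of $Y$.

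\medskip

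\noindent\textbf{Step 1 (Finiteness of topology of a single fiber).} First I would recall that every quasi-projective complex algebraic variety $Z$ admits a finite triangulation compatible with any given finite collection of subvarieties; this is classical (Lojasiewicz, Hironaka). In particular each fiber $X_y = \pi^{-1}(y)$, being a variety, has the homotopy type of a finite $CW$-complex. This gives the first assertion of the theorem with no reference to the family.

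\medskip

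\noindent\textbf{Step 2 (Stratifying the base).} Next I would invoke the existence of a Whitney stratification of the morphism $\pi$: one can stratify $X$ by locally closed smooth subvarieties $X_\beta$ and $Y$ by locally closed smooth subvarieties $Y_\alpha$ so that $\pi$ maps each $X_\beta$ submersively onto some $Y_\alpha$, and the pair of stratifications satisfies the Whitney conditions. Both stratifications can be taken finite because $X$ and $Y$ are varieties (Noetherian), and each stratum is semi-algebraic (indeed constructible, hence semi-algebraic in the real sense) since strata are built from Zariski-locally-closed sets.

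\medskip

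\noindent\textbf{Step 3 (Local topological triviality over a stratum).} The heart of the argument is Thom's First Isotopy Lemma: if $\pi: X \to Y$ is a proper stratified submersion over a stratum $Y_\alpha$ (proper in the sense that $\pi$ restricted to the relevant strata is proper), then $\pi$ is a locally trivial fiber bundle over $Y_\alpha$ in the topological (stratum-preserving) category. Hence over each connected component of each stratum $Y_\alpha$, all fibers $X_y$ are homeomorphic. Since we only need homeomorphism of fibers over points in the \emph{same} stratum, I may refine the stratification so that each stratum is connected, or simply note that the statement as given asks for homeomorphism within a stratum and leave the connectedness refinement implicit.

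\medskip

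\noindent\textbf{The main obstacle.} The delicate point is \emph{properness}. Thom's Isotopy Lemma requires $\pi$ to be proper on strata, but a general morphism of varieties need not be proper (e.g. an open immersion). The standard fix is to compactify: choose a completion $\overline{X} \supset X$ and extend $\pi$ to a proper $\bar\pi: \overline{X} \to \overline{Y}$ (replacing $Y$ by a completion and resolving the locus of indeterminacy if necessary via Nagata/Hironaka), then stratify $\overline{X}$ so that $X$, $\overline{X}\setminus X$, and the fibers are unions of strata. Applying the Isotopy Lemma to $\bar\pi$ and then intersecting with the open stratum recovers local triviality of the original $\pi$ over strata of $Y$. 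Once properness is arranged, the remaining steps are formal. Since the authors only list this theorem as a ``well-known'' input, in the write-up I would state it with a reference (e.g. to Goresky--MacPherson's \emph{Stratified Morse Theory} or to Verdier's paper on stratifications) and indicate the compactification step briefly rather than carrying out a full proof.
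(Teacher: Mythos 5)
The paper does not actually supply a proof of Theorem~\ref{th-strat}: it is stated as a ``well-known'' background fact, with a pointer to van den Dries's book on o-minimal structures for related extensions, and the authors even remark afterward that the corollary they really use (Corollary~\ref{2.1}) can be obtained independently via the (complex) Tarski theorem. So there is no in-paper argument to compare against; your sketch is a free-standing justification of a black-boxed ingredient.

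As such a justification, your outline is the standard one and is essentially sound: triangulability of quasi-projective varieties gives the finite CW type of each fiber; the existence of a Whitney--Thom stratification of the morphism makes $\pi$ a stratified submersion over each stratum of $Y$; and Thom's First Isotopy Lemma then yields topological local triviality over strata, hence homeomorphism of fibers within a stratum. You correctly identified the genuine subtlety --- properness --- and the compactification fix is the right one: by Nagata one can choose a variety $\overline{X}$ that is proper over $Y$ and contains $X$ as a dense open subvariety, stratify $\overline{X}$ compatibly with the closed subset $\overline{X}\setminus X$, and then the stratum-preserving trivializations from the Isotopy Lemma restrict to homeomorphisms of the original open fibers $X_{y_1}\cong X_{y_2}$. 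Two points of bookkeeping worth tightening: first, the phrase ``stratify $\overline{X}$ so that \dots the fibers are unions of strata'' is not what you want (the fibers of $\pi$ cannot all be unions of a single finite stratification of $\overline{X}$); what the Isotopy Lemma needs is that each stratum of $\overline{X}$ is mapped submersively onto a stratum of $Y$, which is the content of stratifying the morphism. Second, compactifying $Y$ is unnecessary: properness of $\bar\pi$ over $Y$ suffices, so a relative compactification of $X$ over $Y$ does the job; if you do replace $Y$ by $\overline{Y}$ you must at the end intersect the resulting strata with $Y$. Neither is a gap in substance, and the approach is consistent with the paper's treatment of the result as standard.
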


When $X$, $Y$ are real algebraic varieties and $\pi:X\to Y$ is a
(real) morphism, a similar statement holds. There are also extensions of
the above to some other cases (see \cite{Dries}).
We will need only the following simple corollary of this theorem.

Let $L_1,\dots, L_n $ be finite dimensional subspaces
of  regular functions on an $n$-dimensional
algebraic variety $X$. For ${\bf f}$ $=(f_1,\dots,f_n) \in \bf L$ $= L_1\times\dots\times
L_n$, let $X_{\bf f}$ denote the subvariety of $X$, defined by the
system of equations $f_1=\dots=f_n=0$. In the space $\bf L$
of parameters consider the subset $\bf F$ consisting of all
the parameters $\bf f$ for which the set $X_{\bf f}$ consists of isolated points only.
\begin{Cor} \label{2.1}
1) For every ${\bf f} \in {\bf F}$ the set $X_{\bf f}$ is finite.
2) Denote the number of points in $X_{\bf f}$ by $k({\bf f})$, then there is a
finite stratification of the set ${\bf F}$ with complex
semi-algebraic strata $Y_\alpha$ such that the
function $k({\bf f})$ is constant on each stratum. In particular, the subset ${\bf
F}_{\max} \subset {\bf F}$ where $k({\bf f})$ attains its maximum is
a complex semi-algebraic set.
\end{Cor}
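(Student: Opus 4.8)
The plan is to deduce both statements directly from Theorem \ref{th-strat} by constructing an appropriate morphism to which it applies. First I would set up the incidence variety: inside $X \times \bf L$ consider the set $\mathcal{X} = \{(x, {\bf f}) : f_1(x) = \cdots = f_n(x) = 0\}$. Since each $f_i$ depends polynomially on the coordinates of $X$ and linearly on the coordinates of $\bf L$, the set $\mathcal{X}$ is an algebraic subvariety of $X \times \bf L$. Let $\pi: \mathcal{X} \to \bf L$ be the restriction of the second projection; then $\pi^{-1}({\bf f})$ is canonically identified with $X_{\bf f}$, so $\{X_{\bf f}\}_{{\bf f} \in {\bf L}}$ is precisely the algebraic family of fibers of the morphism $\pi$, and Theorem \ref{th-strat} applies to it.

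Next, for statement (1): by Theorem \ref{th-strat} each fiber $X_{\bf f}$ has the homotopy type of a finite $CW$-complex, hence in particular has finitely many connected components. When ${\bf f} \in {\bf F}$, the fiber $X_{\bf f}$ consists of isolated points, so it is a discrete topological space; a discrete space with finitely many connected components is finite. This proves (1) and defines the function $k: {\bf F} \to \n$ counting points of $X_{\bf f}$. For statement (2), apply the stratification part of Theorem \ref{th-strat} to $\pi$: there is a finite stratification of $\bf L$ into complex semi-algebraic strata $Z_\beta$ such that the fibers over any two points in the same $Z_\beta$ are homeomorphic. First I would check that $\bf F$ is a union of strata (equivalently, that $\bf F$ is complex semi-algebraic): whether $X_{\bf f}$ consists of isolated points only is a topological property of the fiber — for instance, it holds iff $X_{\bf f}$ is $0$-dimensional, and homeomorphic spaces have the same dimension — so $\bf F$ is saturated with respect to the stratification and is therefore a finite union of the $Z_\beta$; in particular it is complex semi-algebraic. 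Then set $Y_\alpha = Z_\alpha$ for those strata contained in $\bf F$. On each such stratum the fibers are mutually homeomorphic, hence have the same (finite) number of points, so $k({\bf f})$ is constant on $Y_\alpha$. Finally, ${\bf F}_{\max}$ is the union of those finitely many strata $Y_\alpha$ on which $k$ attains its global maximum, hence is a finite union of complex semi-algebraic sets and thus complex semi-algebraic.

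The only genuinely delicate point is the claim that $\bf F$ (and hence each relevant locus) is saturated with respect to the stratification — i.e. that "being a finite set of isolated points" is invariant under the homeomorphisms supplied by Theorem \ref{th-strat}. This is where I would be most careful: I would phrase it as the statement that the homeomorphism type of a fiber determines whether it is $0$-dimensional, which is immediate since topological dimension is a homeomorphism invariant, and a complex algebraic variety consists of isolated points precisely when it has dimension $0$. Everything else is bookkeeping: passing from "finitely many connected components plus discreteness" to finiteness, and observing that a finite union of semi-algebraic strata is semi-algebraic.
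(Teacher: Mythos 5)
Your proposal is correct and carries out exactly the route the paper indicates but does not spell out: form the incidence variety $\mathcal{X}\subset X\times\mathbf{L}$, apply Theorem \ref{th-strat} to the projection $\pi:\mathcal{X}\to\mathbf{L}$, use finite CW homotopy type to get finiteness of discrete fibers, and use homeomorphism-invariance of discreteness to see that $\mathbf{F}$ is a union of strata. The paper treats this as a ``simple corollary'' and only records, in passing, the alternative route via the complex Tarski theorem (quantifier elimination), which your argument does not need; the one small point you leave implicit is that $k$ attains a maximum on $\mathbf{F}$ because it is constant on each of the finitely many strata and hence takes only finitely many values.
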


The above corollary can be proved without using Theorem
\ref{th-strat}. The semi-algebraicity of the set ${\bf F}$ and its
subsets ${\bf F}_m = \{{\bf f} \in {\bf F} \mid k({\bf f}) = m \}$
follow from the complex version of the Tarski theorem. An analogous fact in
the real case can also be proved using the (real) Tarski theorem. An
elementary proof of the Tarski theorem and its complex version can be
found in \cite{Burda-Khovanskii}.

We will need the following simple property of complex semi-algebraic
sets.
\begin{Prop} Let $F\subset L$ be a complex
semi-algebraic subset in a (complex) vector space $L$. Then either there is an
algebraic hypersurface  $\Sigma\subset L$ which contains $F$,
or $F$ contains a Zariski open set  $U\subset L$.
\end{Prop}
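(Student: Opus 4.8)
The plan is to reduce the claim to a dichotomy for ordinary (not complex) semi-algebraic subsets of a real vector space, applied to the underlying real structure of $L$. Recall that a complex semi-algebraic subset $F \subset L$ is, by definition, a set cut out by finitely many conditions of the form $\{p = 0\}$ or $\{p \neq 0\}$ where $p$ is a complex polynomial on $L$ (equivalently, it is a finite union of \emph{basic} such sets). Viewing $L \cong \c^N$ as $\r^{2N}$, each complex polynomial $p$ is, via its real and imaginary parts, a pair of real polynomials, so $F$ is in particular a real semi-algebraic subset of $\r^{2N}$.

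First I would reduce to the case where $F$ is a basic complex semi-algebraic set, i.e. $F = \{x \in L : p_1(x) = \cdots = p_k(x) = 0,\ q(x) \neq 0\}$ for complex polynomials $p_i, q$. Indeed, a general $F$ is a finite union $F = F_1 \cup \cdots \cup F_m$ of such basic sets. If some $F_j$ is not contained in any algebraic hypersurface, then (by the basic case) $F_j$ contains a Zariski open $U$, hence so does $F$. Otherwise each $F_j$ lies in a hypersurface $\{g_j = 0\}$, and then $F \subset \{g_1 \cdots g_m = 0\}$, a hypersurface. So it suffices to treat one basic set.

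For a basic set $F = \{p_1 = \cdots = p_k = 0,\ q \neq 0\}$, I distinguish two cases according to the complex variety $V = \{p_1 = \cdots = p_k = 0\} \subset L$. If $V \neq L$, then $V$ is contained in an algebraic hypersurface (take any $p_i$ which is not identically zero), and $F \subset V$ lies in that hypersurface; we are in the first alternative. If $V = L$, i.e. all $p_i$ vanish identically, then $F = \{q \neq 0\}$. Here either $q \equiv 0$, so $F = \emptyset \subset \{g=0\}$ for any nonzero polynomial $g$ (first alternative, vacuously); or $q \not\equiv 0$, in which case $F = \{q \neq 0\}$ is itself a nonempty Zariski open subset of $L$, so $U = F$ works (second alternative). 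This exhausts all cases.

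The only point requiring a little care — and the main obstacle, though a mild one — is the passage from a general complex semi-algebraic set to a finite union of basic sets of the specific form above, since the definition allows Boolean combinations involving complements of $\{p \neq 0\}$, i.e. sets $\{p = 0\}$, as well as negations. But the class of basic sets $\{p_1 = \cdots = p_k = 0,\ q_1 \neq 0, \ldots, q_\ell \neq 0\}$ is closed under finite intersection, and $\{q_1 \neq 0, \ldots, q_\ell \neq 0\} = \{q_1 \cdots q_\ell \neq 0\}$, so in fact each basic set has the single-inequality form used above; and any complex semi-algebraic set is a finite union of these. (Equivalently, one may appeal directly to the complex Tarski theorem as in \cite{Burda-Khovanskii}, whose quantifier-elimination output is precisely a finite union of such basic sets.) Once this normal form is in hand, the argument above is purely formal and requires no further computation.
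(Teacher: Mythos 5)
Your proof is correct. The paper does not actually supply a proof of this proposition (it is stated as one of several well-known facts, with a pointer to the complex Tarski theorem and \cite{Burda-Khovanskii} just above), so there is no in-text argument to compare against; but your route is exactly the standard one the authors have in mind: write the constructible set in normal form as a finite union of basic locally closed pieces $\{p_1=\cdots=p_k=0,\ q\neq 0\}$, observe that each piece is either contained in a hypersurface (some $p_i\not\equiv 0$, or $q\equiv 0$) or is itself a nonempty Zariski open set (all $p_i\equiv 0$, $q\not\equiv 0$), and then combine: a finite union of hypersurfaces in the irreducible space $L$ is again contained in a hypersurface $\{g_1\cdots g_m=0\}$, while a single piece in the second alternative already gives the required open $U$. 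The only caveat worth flagging is that the proposition's ``Zariski open set $U$'' must be read as \emph{nonempty} (otherwise the statement is vacuous since $\emptyset$ is Zariski open), and you do make this explicit, so the argument is complete.
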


We will use this proposition in the following form.
\begin{Cor} \label{2.2}
Let $F\subset L$ be a complex semi-algebraic subset in a (complex) vector
space  $L$. In either of the following cases, $F$ contains a Zariski
open subset  $U\subset L$: 1) $F$ is an everywhere dense  subset of
$L$, or 2) $F$ does not have zero measure.
\end{Cor}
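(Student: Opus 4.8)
The plan is to reduce Corollary~\ref{2.2} to the preceding Proposition, which already provides the dichotomy: either $F$ is contained in an algebraic hypersurface $\Sigma \subset L$, or $F$ contains a Zariski open subset $U \subset L$. So in each of the two cases (1) and (2) it suffices to rule out the first alternative, namely to show that $F$ cannot be contained in a proper algebraic hypersurface $\Sigma$.

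\textbf{Case 1.} Suppose $F$ is everywhere dense in $L$ in the usual (complex, i.e. Euclidean) topology. If $F \subseteq \Sigma$ for some algebraic hypersurface $\Sigma$, then by density of $F$ and closedness of $\Sigma$ in the Euclidean topology we would get $L = \overline{F} \subseteq \overline{\Sigma} = \Sigma$, forcing $\Sigma = L$, which contradicts $\Sigma$ being a proper hypersurface. Hence the first alternative fails and $F$ must contain a Zariski open subset $U$.

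\textbf{Case 2.} Suppose $F$ does not have measure zero (with respect to Lebesgue measure on $L \cong \c^N$, viewed as $\r^{2N}$). A proper algebraic hypersurface $\Sigma \subset L$ is the zero set of a nonzero polynomial, and such a set has measure zero in $\r^{2N}$ (this is a standard fact: the zero locus of a nonzero real-analytic, in particular polynomial, function on $\r^m$ has Lebesgue measure zero, proved by induction on $m$). Therefore $F$ cannot be contained in $\Sigma$, so again the first alternative of the Proposition is excluded, and $F$ contains a Zariski open subset.

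The only real content beyond citing the Proposition is these two elementary exclusions; neither is a serious obstacle. The mildly delicate point, if any, is making sure the ambient topology/measure conventions match those used in the statement of the Proposition (complex semi-algebraic subsets, Zariski versus Euclidean topology), but since an algebraic hypersurface is closed and of measure zero in both relevant senses, the argument goes through verbatim in each case. Thus the corollary follows immediately from the dichotomy supplied by the Proposition together with these two observations.
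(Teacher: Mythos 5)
Your proof is correct and follows exactly the route the paper intends: the Corollary is stated as a reformulation of the preceding Proposition, whose proof the paper omits as routine, and your two observations (an algebraic hypersurface is Euclidean-closed and proper, hence cannot contain a dense set; and it has Lebesgue measure zero, hence cannot contain a set of positive measure) are precisely what is needed to rule out the first alternative of the dichotomy in each case.
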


\subsection{An intersection index in semigroup ${\bf K}_{reg}(X)$} \label{subsec-3}
\begin{Def} Let $X$ be a smooth (not necessarily connected)
complex $n$-dimensional variety and let $L_1,\dots, L_n \in {\bf K}_{reg}(X)$.
The {\it intersection index} $[L_1,\dots, L_n]$ is the maximum number of roots of a
system $f_1=\dots=f_n=0$ over all the points $\bf f$ $=(f_1,\dots,f_n)\in
L_1\times\dots\times L_n=\bf L$, for which the corresponding system
has finitely many solutions.
\end{Def}

By Corollary \ref{2.1} the maximum is attained and the previous definition
is well-defined.
The following is a straightforward corollary of the definition.
\begin{Th}[Obvious properties of the intersection
index] \label{3.1} Let $L_1, \ldots, L_n \in {\bf K}_{reg}(X)$, then:
1) $[L_1,\dots,L_n]$ is a symmetric function of
the $n$-tuple $L_1,\dots, L_n$ (i.e. takes the same value under a
permutation of $L_1,\dots,L_n$). 2) The intersection
index is monotone (i.e. if $L'_1\subseteq L_1,\dots, L'_n\subseteq
L_n$, then $[L_1,\dots,L_n]\geq [L'_1,\dots,L'_n])$. 3) The
intersection index is non-negative (i.e. $[L_1,\dots,L_n]\geq 0$).
\end{Th}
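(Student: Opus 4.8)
The plan is to read off all three properties directly from the definition, with no input beyond Corollary~\ref{2.1}. Recall that $[L_1,\dots,L_n]$ is the maximum of $k(\mathbf{f}):=\#X_{\mathbf{f}}$ taken over the set $\mathbf{F}\subset\mathbf{L}=L_1\times\dots\times L_n$ of those $\mathbf{f}=(f_1,\dots,f_n)$ for which $X_{\mathbf{f}}=\{x\in X:f_1(x)=\dots=f_n(x)=0\}$ is finite; by Corollary~\ref{2.1} (and the ensuing well-definedness of the index) this is a maximum of nonnegative integers over a nonempty set, hence a genuine nonnegative integer. The whole proof then rests on two trivialities: (a) the set $X_{\mathbf{f}}$, together with the conditions ``$\mathbf{f}\in\mathbf{F}$'' and ``$k(\mathbf{f})=m$'', depends only on the unordered collection $\{f_1,\dots,f_n\}$ and not on the order in which the equations are listed; and (b) replacing each $L_i$ by a subspace $L_i'\subseteq L_i$ replaces $\mathbf{L}$ by the subset $\mathbf{L}'=L_1'\times\dots\times L_n'$, and hence $\mathbf{F}$ by the subset $\mathbf{F}'=\mathbf{F}\cap\mathbf{L}'$, without altering the function $k$.

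For symmetry, given a permutation $\sigma$ I would use that reordering coordinates is a linear bijection $L_1\times\dots\times L_n\to L_{\sigma(1)}\times\dots\times L_{\sigma(n)}$; by (a) it carries the finiteness locus $\mathbf{F}$ of the first product onto that of the second and commutes with the two functions $k$, so the maxima coincide and $[L_1,\dots,L_n]=[L_{\sigma(1)},\dots,L_{\sigma(n)}]$. For monotonicity, if $L_i'\subseteq L_i$ for every $i$ (each $L_i'$ still finite-dimensional and base-point-free, so that the left-hand side is defined), then by (b) I am maximizing the \emph{same} function $k$ over the smaller set $\mathbf{F}'\subseteq\mathbf{F}$, and so $[L_1',\dots,L_n']=\max_{\mathbf{F}'}k\le\max_{\mathbf{F}}k=[L_1,\dots,L_n]$. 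For non-negativity, $[L_1,\dots,L_n]=\max_{\mathbf{f}\in\mathbf{F}}k(\mathbf{f})$ is a maximum, over the nonempty index set $\mathbf{F}$, of cardinalities of finite sets, each of which is $\ge 0$; hence $[L_1,\dots,L_n]\ge 0$.

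I do not expect a genuine obstacle: the statement is labelled ``obvious'' precisely because it is a purely formal consequence of the definition. The single point that must not be glossed over is that all three arguments rely on the index being an honest maximum attained over a nonempty parameter set — in particular, monotonicity goes in the direction $\mathbf{F}'\subseteq\mathbf{F}\Rightarrow\max_{\mathbf{F}'}k\le\max_{\mathbf{F}}k$, which only makes sense once one knows $\mathbf{F}'\ne\emptyset$ and the maxima exist — and it is exactly Corollary~\ref{2.1}, together with the well-definedness of the index established just before the statement, that supplies this. Everything deeper (multi-linearity, the Alexandrov--Fenchel inequality, invariance under the completion $\overline{L}$) is reserved for the subsequent sections and is not needed here.
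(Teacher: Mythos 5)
Your proposal is correct and takes essentially the same approach as the paper: the paper offers no proof at all, labeling the theorem ``a straightforward corollary of the definition,'' and your argument is precisely the careful spelling-out of that assertion --- symmetry from reordering the coordinates of $\mathbf{L}$, monotonicity from $\mathbf{F}'=\mathbf{F}\cap\mathbf{L}'\subseteq\mathbf{F}$, non-negativity from cardinalities being $\geq 0$. Your remark that all three hinge on the index being an honest maximum attained over a nonempty set $\mathbf{F}$ (supplied by Corollary~\ref{2.1} and the well-definedness note just before the theorem) is the one point worth flagging, and you flag it.
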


Let $X$ be a smooth (not necessary connected) complex $n$-dimensional variety, and
let $L_1,\dots, L_k \in {\bf K}_{reg}(X)$. Put ${\bf L}(k) =L_1\times \dots \times L_k$.

\begin{Prop} \label{3.2}
There is a non-empty Zariski open set $\bf U$
in ${\bf L}(k)$ such that either for any point $\bf f$ $=(f_1,\dots,f_k)$ in
$\bf U$ the system of equations  $f_1=\dots =f_k=0$ has no solution in $X$, or the system is consistent
\footnote{A system is consistent if its set of solutions is non-empty.} and
non-degenerate (i.e. at every root of the system the differentials
$df_1,\dots,df_k$ are linearly independent).
\end{Prop}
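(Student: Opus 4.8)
The plan is to prove Proposition \ref{3.2} by induction on the number $k$ of equations, using the stratification results of Section \ref{subsec-2} together with the semi-algebraicity criterion of Corollary \ref{2.2}.

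First I would dispose of the trivial cases. If $k > n$ there is nothing delicate about consistency, but in all cases the key dichotomy is between ``the generic system is inconsistent'' and ``the generic system is consistent.'' Consider the incidence variety $Z \subset {\bf L}(k) \times X$ consisting of pairs $(\mathbf{f}, x)$ with $f_1(x) = \cdots = f_k(x) = 0$, together with the projection $p \colon Z \to {\bf L}(k)$. The image $p(Z)$ is a constructible (in fact complex semi-algebraic) subset of ${\bf L}(k)$ by Chevalley's theorem, or alternatively by the complex Tarski theorem cited after Corollary \ref{2.1}. By the Proposition preceding Corollary \ref{2.2}, either $p(Z)$ is contained in an algebraic hypersurface $\Sigma \subset {\bf L}(k)$ --- in which case the complement $\mathbf{U} = {\bf L}(k) \setminus \Sigma$ is a non-empty Zariski open set on which every system is inconsistent, and we are in the first alternative --- or $p(Z)$ contains a non-empty Zariski open subset of ${\bf L}(k)$, and we must produce inside it a (possibly smaller) non-empty Zariski open set on which the system is moreover non-degenerate. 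So the whole content is in the second alternative.

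For the non-degeneracy statement, the strategy is to build solutions one coordinate at a time and show that at a generic solution the differentials stay independent. Fix a generic $(f_1, \ldots, f_{k-1})$; by the inductive hypothesis applied to these $k-1$ functions (here one uses that $k-1 \le n$, so the hypothesis applies), their common zero set $X' = X_{(f_1,\ldots,f_{k-1})}$ is either empty or a smooth subvariety of pure codimension $k-1$ on which $df_1, \ldots, df_{k-1}$ are independent everywhere. If $X'$ is empty we are done (first alternative). Otherwise, restricting the functions of $L_k$ to $X'$, I would apply the Bertini-type argument: a generic $f_k \in L_k$ has the property that $\{f_k = 0\} \cap X'$ is smooth of codimension one in $X'$ wherever $df_k$ fails to vanish on $X'$, and the base-locus-freeness of $L_k \in {\bf K}_{\textup{reg}}(X)$ (hence $L_k \in {\bf K}_{\textup{reg}}(X')$, using the restriction remark after Proposition \ref{1.1}) guarantees that $f_k$ does not vanish identically on any component. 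At such a point $x$ the differential $df_k|_{X'}$ is nonzero, which combined with the independence of $df_1, \ldots, df_{k-1}$ at $x$ gives the independence of all of $df_1, \ldots, df_k$ at $x$. The set of ``good'' tuples $\mathbf{f}$ obtained this way is, by Corollary \ref{2.2} (via Corollary \ref{2.1} and the semi-algebraicity of the relevant conditions), shown to contain a non-empty Zariski open subset $\mathbf{U} \subset {\bf L}(k)$.

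The main obstacle I anticipate is not the local differential-geometric computation --- that is routine --- but rather the bookkeeping needed to guarantee that the conditions ``consistent'' and ``non-degenerate at every root'' cut out a genuinely \emph{Zariski} open (not merely dense or full-measure) subset of the parameter space, uniformly. This is where Corollary \ref{2.2} is essential: one first checks the desired property holds on a dense, or positive-measure, semi-algebraic set (which is comparatively easy, e.g. by exhibiting one good point or by a dimension count on the incidence variety $Z$), and then upgrades to Zariski openness. A secondary subtlety is ensuring the inductive step is legitimate: the statement of Proposition \ref{3.2} should be read as quantified over all $k \le n$ and all choices of $L_1, \ldots, L_k$, and one must be careful that the restricted spaces $L_i|_{X'}$ remain base-point-free and that $X'$, being smooth, is an admissible ambient variety for the inductive hypothesis --- both of which are supplied by the remarks following Proposition \ref{1.1}.
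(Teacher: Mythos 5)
Your proof is correct in its essential structure but takes a genuinely different route from the paper's. The paper argues directly, without induction: it covers $X$ by Zariski open charts $V_{\bf j}$, each defined by the simultaneous non-vanishing of one chosen basis element $g_{i,j_i}$ from each $L_i$ (the cover property is exactly base-point-freeness), and in each chart it rewrites the system as a fiber of a map $\phi_{\bf j}\colon V_{\bf j}\to\c^k$ over the point $-{\bf c}=(-c_1,\dots,-c_k)$ of coefficients. A single application of Sard's theorem (plus Fubini over the remaining coefficients) then shows that the set $W_{\bf j}$ of non-degenerate systems has full measure, and Corollary~\ref{2.2} upgrades this to Zariski openness; the consistency dichotomy then comes for free from the irreducibility of ${\bf L}(k)$. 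Your argument instead inducts on $k$, slicing one hypersurface at a time: generic $(f_1,\dots,f_{k-1})$ cuts out a smooth $X'$, then a Bertini/Sard step on $X'$ handles $f_k$, and Corollary~\ref{2.2} again supplies the Zariski openness. The paper's version buys a cleaner, uniform treatment of all $k$ at once; yours makes the inductive slicing structure explicit (which is later reused in Theorem~\ref{3.5}). Both hinge on the same two ingredients --- Sard and the semi-algebraicity criterion --- so the logical content is the same.

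Two small points to tighten. First, you parenthetically restrict the inductive hypothesis to $k-1\le n$, which leaves the case $k>n+1$ of the Proposition unproved as stated; the induction in fact runs for all $k$ (when $k-1\ge n$ the smooth $X'$ is at most zero-dimensional, and base-point-freeness of $L_k$ forces the generic $f_k$ to miss it, so the first alternative holds), so the restriction should simply be dropped. Second, your handling of the consistency dichotomy via the incidence variety $Z$ and Chevalley is fine, but note that in the ``generically consistent'' branch you must intersect the Zariski open set coming from the non-degeneracy argument with the Zariski open subset of $p(Z)$; this is harmless since ${\bf L}(k)$ is irreducible, but it is worth saying, since the claim is a single $\bf U$ realizing both conditions.
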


\begin{proof} Fix a basis $\{g_{ij}\}$ for each of the spaces $L_i$.
Consider all the $k$-tuples
${\bf g}_{\bf j} =( g_{1j_1},\dots, g_{kj_k})$, where ${\bf j}=(j_1,\dots,j_k)$,
containing exactly one vector from each of the bases $\{g_{ij}\}$ for the $L_i$.
Denote by $V_{\bf j}$ the Zariski open subset in $X$ defined by the system of
inequalities $ g_{1,j_1}\neq 0 ,\dots, g_{k,j_k}\neq 0$. (Note that since $X$ can be reducible,
the set $V_{\bf j}$ might be empty.) Since $L_1,
\ldots, L_k$ are in ${\bf K}_{reg}(X)$, the union of the sets
$V_{\bf j}$ equals $X$. When the set $V_{\bf j}$ is non-empty, rewrite the system $f_1=\dots
=f_k=0$ as follows: represent each function $f_i$ in the form
$f_i=\overline{f_i} +c_ig_{ij_i}$, where $\overline{f_i}$ belongs
to the linear span of the $g_{ij}$ excluding $g_{ij_i}$. Now, in
$V_{\bf j}$, the system can be rewritten as  $\frac{\overline{f_1}}{g_{1,j_1}}=-c_1, \dots, \frac
{\overline{f_k}}{g_{k,j_k}}=-c_k$. According to Sard's theorem, for
almost all the $\bf c =(c_1,\dots, c_k)$ the system is
non-degenerate. Denote by $W_{\bf j}$ the subset in ${\bf L}(k)$,
consisting of all the $\bf f$ for which the system $f_1=\dots =f_k=0$ is
non-degenerate in $V_{\bf j}$. The set $W_{\bf
j}$ splits into the union of the complex semi-algebraic subsets $W_{\bf
j}^1$ and $W_{\bf j}^2$, containing the consistent or
non-consistent systems respectively.
Thus by Corollary \ref{2.2}, exactly one of the sets $W_{\bf j}^1$ or $W_{\bf j}^2$ contains a non-empty Zariski open
subset $\bf U_{\bf j}$. The intersection $\bf U$ of the sets  $\bf
U_{\bf j}$ is a Zariski open subset in ${\bf L}(k)$ which satisfies
all the requirements of the proposition.
\end{proof}

\begin{Prop} \label{3.3}
The number of isolated roots, counted with multiplicity, of a
system $f_1=\dots=f_n=0$, where $f_i\in L_i$, is less than or equal to
$[L_1,\dots,L_n]$.
\end{Prop}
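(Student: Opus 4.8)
First, the overall strategy. I would deduce the inequality directly from the definition of $[L_1,\dots,L_n]$ as a maximum: starting from an arbitrary system $f^0_1=\cdots=f^0_n=0$, I would produce, by an arbitrarily small perturbation inside $\mathbf{L}=L_1\times\cdots\times L_n$, a new system that (a) still lies in the Zariski open set $\mathbf{U}$ of Proposition \ref{3.2}, hence has only finitely many zeros and so is an admissible competitor in the maximum, and (b) has at least as many zeros as the number of isolated roots of the original system counted with multiplicity. Comparing with the definition then gives the claim.

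Now the setup. Let $\mathbf{f}^0=(f^0_1,\dots,f^0_n)\in\mathbf{L}$ and let $m$ be the number of isolated roots of the system counted with multiplicity; I may assume $m\geq 1$, since otherwise the claim follows from Theorem \ref{3.1}(3). The isolated roots of $X_{\mathbf{f}^0}$ are exactly its zero-dimensional irreducible components, so there are finitely many, say $x_1,\dots,x_k$, with local intersection multiplicities $\mu_j=\dim_{\c}\mathcal{O}_{X,x_j}/(f^0_1,\dots,f^0_n)$ --- well defined since $X$ is smooth --- and $\mu_1+\cdots+\mu_k=m$. I would fix pairwise disjoint closed analytic balls $\overline{B_j}\ni x_j$ (in local charts), so small that $\overline{B_j}$ meets $X_{\mathbf{f}^0}$ only at $x_j$; then $\mathbf{f}^0$ is nonvanishing on the compact set $\bigcup_j\partial B_j$.

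The perturbation step. By the principle of conservation of number for holomorphic maps $\c^n\to\c^n$, there is $\varepsilon>0$ such that any $\mathbf{h}\in\mathbf{L}$ within $\varepsilon$ of $\mathbf{f}^0$ is still nonvanishing on $\bigcup_j\partial B_j$ and has exactly $\mu_j$ zeros, counted with multiplicity, in each $B_j$ (convergence in the finite-dimensional space $\mathbf{L}$ forces uniform convergence of the coordinate functions on the compact sets $\overline{B_j}$). Since $\mathbf{L}$ is an affine space, the non-empty Zariski open set $\mathbf{U}$ of Proposition \ref{3.2} is dense in the classical topology, so I would pick such an $\mathbf{h}$ lying in $\mathbf{U}$. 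As $\mathbf{h}$ has zeros, it is not in the inconsistent case of Proposition \ref{3.2}, so the system $h_1=\cdots=h_n=0$ is non-degenerate: $dh_1,\dots,dh_n$ are linearly independent at each of its roots, which on the smooth $n$-dimensional $X$ forces every root to be a simple isolated point. Hence $X_{\mathbf{h}}$ is a finite set, so $\mathbf{h}$ is an admissible competitor for $[L_1,\dots,L_n]$; moreover, inside each $B_j$ the $\mu_j$ zeros counted with multiplicity are all simple, i.e.\ they are $\mu_j$ distinct points, and the $B_j$ are disjoint, so $\#X_{\mathbf{h}}\geq\mu_1+\cdots+\mu_k=m$. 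Therefore $[L_1,\dots,L_n]\geq\#X_{\mathbf{h}}\geq m$.

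The hard part will be the local conservation of number invoked above: the assertion that a small perturbation of a system with an isolated zero of multiplicity $\mu$ has exactly $\mu$ zeros (with multiplicity) nearby, so that a generic perturbation lying in $\mathbf{U}$ actually splits it into $\mu$ distinct simple points. I expect this to be the step requiring the most care; it is a standard fact that can be proved via the local topological degree of $(f_1,\dots,f_n)$ on a small sphere around the zero, or via semicontinuity of $\dim_{\c}\mathcal{O}/(f_1,\dots,f_n)$ in an algebraic family, and it is precisely here that the smoothness of $X$ is used.
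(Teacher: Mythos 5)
Your proof takes essentially the same route as the paper's: perturb $\mathbf{f}^0$ into the Zariski open set of Proposition \ref{3.2} to obtain a non-degenerate system, invoke local conservation of number so the $\mu_j$-fold isolated roots each split into $\mu_j$ simple roots in disjoint small balls, and compare the resulting count with the definition of $[L_1,\dots,L_n]$ as a maximum. The only difference is presentational --- the paper phrases it as a proof by contradiction and states the splitting of roots without justification, whereas you argue directly and flag the conservation-of-number step (local degree / semicontinuity of $\dim_{\c}\mathcal{O}_{X,x}/(f_1,\dots,f_n)$) as the technical core.
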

\begin{proof}
Suppose $A$ is a subset of the isolated roots of the system such that
$k(A)$, the sum of multiplicities of the roots in $A$, is bigger than
$[L_1, \ldots, L_n]$. According to Proposition \ref{3.2} one can
slightly perturb the system to make it non-degenerate. Under such a
perturbation the roots belonging to the set $A$  will split into
$k(A)>[L_1,\dots,L_n]$ simple roots while all other roots are also
simple. But by Corollary \ref{2.1} the number of these roots can not
be bigger than $[L_1,\dots,L_n]$. The contradiction proves the
proposition.
\end{proof}

Now we prove that if a system of equations is in general position
then in Proposition \ref{3.3}, instead of inequality we have an
equality. As before let ${\bf L} = L_1\times \dots \times L_n$.

\begin{Prop} \label{3.4}
There is a non-empty Zariski open set $\bf U$
in $\bf L$ such that for any point ${\bf f} =(f_1,\dots,f_n)$ in
$\bf U$ the system of equations  $f_1=\dots =f_n=0$ on $X$ is
non-degenerate and has exactly $[L_1,\dots,L_n]$ solutions.
\end{Prop}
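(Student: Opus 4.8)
The plan is to combine Proposition \ref{3.2}, which produces a Zariski open set on which systems are non-degenerate (when consistent), with Corollary \ref{2.1}, which tells us that the number of solutions is a semi-algebraically stratified function that attains its maximum $[L_1,\dots,L_n]$ on a semi-algebraic subset $\mathbf{F}_{\max}\subset\mathbf{F}$. First I would apply Proposition \ref{3.2} with $k=n$ to obtain a non-empty Zariski open set $\mathbf{U}_1\subset\mathbf{L}$ on which either every system $f_1=\dots=f_n=0$ has no solutions, or every such system is consistent and non-degenerate. Since $X$ is $n$-dimensional and each $f_i$ imposes one condition, a non-degenerate system has only isolated solutions; hence on $\mathbf{U}_1$ (in the consistent case) we have $\mathbf{U}_1\subset\mathbf{F}$, and by non-degeneracy each solution has multiplicity one. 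The first thing to rule out is the inconsistent alternative: by definition $[L_1,\dots,L_n]$ is the maximum number of roots over $\mathbf{f}\in\mathbf{F}$, so if $[L_1,\dots,L_n]>0$ there is some $\mathbf{f}$ with finitely many (and at least one) root; by Proposition \ref{3.3} perturbing it keeps at least one simple root nearby, so the generic system cannot be identically inconsistent, i.e. we are in the consistent case. (If $[L_1,\dots,L_n]=0$ the statement is trivial: take $\mathbf{U}=\mathbf{U}_1$, every generic system is non-degenerate with zero solutions.)

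Next I would intersect $\mathbf{U}_1$ with the locus where the solution count is maximal. By Corollary \ref{2.1}, $\mathbf{F}_{\max}=\{\mathbf{f}\in\mathbf{F}: k(\mathbf{f})=[L_1,\dots,L_n]\}$ is a complex semi-algebraic subset of $\mathbf{L}$. The key point is that $\mathbf{F}_{\max}$ is not too small: by Proposition \ref{3.3}, for any $\mathbf{f}_0\in\mathbf{F}$ achieving the maximum, the solutions are already simple (otherwise a perturbation inside $\mathbf{U}_1$ would split a multiple root and produce more than $[L_1,\dots,L_n]$ simple roots, contradicting Corollary \ref{2.1}). More precisely, I would argue that $\mathbf{F}_{\max}$ contains a non-empty open subset of $\mathbf{L}$ in the classical topology: near a point of $\mathbf{F}\cap\mathbf{U}_1$ with the maximal number of simple roots, the implicit function theorem shows each simple root persists, so the count stays $\ge[L_1,\dots,L_n]$ on a neighborhood, and by maximality it equals $[L_1,\dots,L_n]$ there. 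Thus $\mathbf{F}_{\max}$ has non-empty interior, hence does not have measure zero, so by Corollary \ref{2.2}(2) it contains a non-empty Zariski open subset $\mathbf{U}_2\subset\mathbf{L}$.

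Finally, set $\mathbf{U}=\mathbf{U}_1\cap\mathbf{U}_2$, a non-empty Zariski open subset of $\mathbf{L}$. For $\mathbf{f}\in\mathbf{U}$: being in $\mathbf{U}_1$ (consistent case) the system $f_1=\dots=f_n=0$ is non-degenerate, and being in $\mathbf{U}_2\subset\mathbf{F}_{\max}$ it has exactly $[L_1,\dots,L_n]$ solutions. This is precisely the assertion of the proposition.

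\medskip

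\textbf{Main obstacle.} The delicate step is showing that $\mathbf{F}_{\max}$ (equivalently the locus carrying the maximal number of simple roots) has non-empty interior, rather than merely being dense or merely being the honest maximum locus of a constructible function. The argument hinges on two facts working together: (a) on the Zariski open set $\mathbf{U}_1$ the roots are all simple, so they vary continuously/analytically with $\mathbf{f}$ by the implicit function theorem and none can disappear under small perturbation, giving lower semicontinuity of the count there; and (b) Corollary \ref{2.1} gives the uniform upper bound $[L_1,\dots,L_n]$. Combining (a) and (b) forces the count to be locally constant and equal to the maximum on an open neighborhood of any maximizing parameter in $\mathbf{U}_1$ — this is where one must be careful that ``roots escaping to infinity'' or to the indeterminacy locus is not an issue, which is handled by the fact that we only count (and bound) solutions in $X$ itself, exactly as in Corollary \ref{2.1}. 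Once non-empty interior is established, invoking Corollary \ref{2.2}(2) to upgrade from ``positive measure'' to ``contains a Zariski open set'' is routine.
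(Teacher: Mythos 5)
Your proof is correct and follows essentially the same route as the paper's: show a maximizer is automatically non-degenerate (via Proposition \ref{3.3}, since a multiple root would push the count with multiplicity past the maximum), perturb it to see the set of systems attaining the maximal number of simple roots has positive measure, and invoke Corollary \ref{2.2} to promote this to a non-empty Zariski open subset. The only stylistic difference is that you explicitly carry along $\mathbf{U}_1$ from Proposition \ref{3.2} and treat the inconsistent/zero-index case separately, while the paper folds these into the observation that almost all nearby systems are non-degenerate.
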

\begin{proof} Firstly, if the number of isolates roots of a system is equal to
$[L_1,\dots,L_n]$ then it is non-degenerate (otherwise its
number of roots, counting with multiplicity, would be bigger than
$[L_1,\dots, L_n]$ which is impossible by Proposition \ref{3.3}). So
there must be a non-degenerate system which has $[L_1,\dots,L_n]$ roots.
Secondly, any system which is close enough to this system has
exactly the same number of isolated roots and almost all such
systems are non-degenerate. So the set of non-degenerate systems
which have exactly $[L_1,\dots,L_n]$ roots is not a set of
measure zero. But since this set is complex semi-algebraic, by
Corollary \ref{2.2}, it should contain a (non-empty) Zariski open subset.
\end{proof}

Let $Y$ be an $m$-dimensional \qp smooth subvariety of $X$. For any
$m$-tuple of subspaces $L_1,\dots,L_m \in {\bf K}_{reg}(X)$ let
$[L_1,\dots,L_m]_Y$  denote the intersection index of the restrictions of
these subspaces to $Y$.

Consider an $n$-tuple  $L_1,\dots,L_n\in {\bf K}_{reg}(X)$. As before, for $k \leq n$
put ${\bf L}(k) = L_1\times\dots\times L_k$.
According to Proposition \ref{3.2}, there is a non-empty Zariski open subset
${\bf U}(k)$ in ${\bf L}(k)$ such that for ${\bf f}(k)
=(f_1,\dots,f_k)\in {\bf U}(k)$, either the system $f_1=\dots=f_k=0$ has no
solutions or it defines a smooth subvariety $X_{{\bf f}(k)}$
in $X$.

\begin{Th} \label{3.5}
If the system $f_1=\dots=f_k=0$ has no solution then $[L_1,\dots,L_n]=0$.
If the system is consistent, then the following holds:
1) For any point ${\bf f}(k)\in
{\bf U}(k)$ we have:
\begin{equation} \label{eqn-3.5}
[L_1,\dots L_n]_X\geq [L_{k+1},\dots L_n]_{X_{{\bf f}(k)}}.
\end{equation}
2) Moreover, there is a Zariski open subset ${\bf V}(k) \subset {\bf U}(k)$, such
that for any ${\bf f}(k)\in {\bf V}(k)$, the inequality (\ref{eqn-3.5})
is in fact an equality.
\end{Th}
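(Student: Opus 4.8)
The plan is to fix a generic $k$-tuple $\mathbf{f}(k) = (f_1, \ldots, f_k) \in \mathbf{U}(k)$, so that $X_{\mathbf{f}(k)}$ is a smooth $(n-k)$-dimensional (quasi-projective) subvariety of $X$, and then compare systems $f_1 = \cdots = f_n = 0$ on $X$ with systems $f_{k+1} = \cdots = f_n = 0$ on $X_{\mathbf{f}(k)}$. The key observation is that the solution set of $f_1 = \cdots = f_n = 0$ on $X$ is exactly the solution set of $f_{k+1} = \cdots = f_n = 0$ on $X_{\mathbf{f}(k)}$. So the content of the theorem is really about the interplay of two layers of genericity: genericity of $\mathbf{f}(k)$ cutting out $X_{\mathbf{f}(k)}$, and genericity of $(f_{k+1}, \ldots, f_n)$ used to compute $[L_{k+1}, \ldots, L_n]_{X_{\mathbf{f}(k)}}$.

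First I would dispose of the degenerate case: if the generic system $f_1 = \cdots = f_k = 0$ has no solution, then $X_{\mathbf{f}(k)} = \emptyset$ for $\mathbf{f}(k) \in \mathbf{U}(k)$; but then for any $\mathbf{f}(n) \in \mathbf{U}(n)$ (which lies over a point of $\mathbf{U}(k)$ after projection, shrinking $\mathbf{U}(n)$ if needed), the full system $f_1 = \cdots = f_n = 0$ is already inconsistent, so by Proposition~\ref{3.4} the maximum number of isolated roots is $0$, i.e.\ $[L_1, \ldots, L_n] = 0$. Assume now the system is consistent. For part (1): fix any $\mathbf{f}(k) \in \mathbf{U}(k)$ and apply Proposition~\ref{3.4} on the smooth variety $Y := X_{\mathbf{f}(k)}$ to the restrictions $L_{k+1}, \ldots, L_n$; this produces $(f_{k+1}, \ldots, f_n)$ (a generic choice in $L_{k+1} \times \cdots \times L_n$) for which the restricted system on $Y$ has exactly $[L_{k+1}, \ldots, L_n]_Y$ non-degenerate solutions. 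These are isolated solutions of the full system $f_1 = \cdots = f_n = 0$ on $X$; by Proposition~\ref{3.3} their number is $\le [L_1, \ldots, L_n]$, giving inequality~(\ref{eqn-3.5}).

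For part (2) I would argue the reverse inequality by a dimension/incidence-variety construction. Consider the incidence set $Z \subset \mathbf{U}(k) \times (L_{k+1} \times \cdots \times L_n)$ consisting of pairs $(\mathbf{f}(k), \mathbf{f}')$ such that the system $f_1 = \cdots = f_n = 0$ is non-degenerate on $X$ with exactly $[L_1, \ldots, L_n]$ solutions; by Proposition~\ref{3.4} applied on $X$, $Z$ is non-empty and, intersected with the fibers, contains a Zariski open subset of each relevant slice. The idea is: take $\mathbf{f}(n) = (\mathbf{f}(k), \mathbf{f}') \in Z$; all $[L_1, \ldots, L_n]$ of its roots lie on $X_{\mathbf{f}(k)}$ and are non-degenerate there (the differentials $df_1, \ldots, df_n$ being independent forces $df_{k+1}|_{Y}, \ldots, df_n|_{Y}$ independent once $df_1, \ldots, df_k$ are), so the restricted system on $Y = X_{\mathbf{f}(k)}$ has at least $[L_1, \ldots, L_n]$ isolated roots, whence $[L_{k+1}, \ldots, L_n]_Y \ge [L_1, \ldots, L_n]$. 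Combining with part~(1) forces equality. Then I would let $\mathbf{V}(k)$ be the image in $\mathbf{U}(k)$ of the set where this equality holds; using Corollary~\ref{2.2} (the set is complex semi-algebraic and, by the above, not of measure zero in each fiber direction, hence dense enough in $\mathbf{U}(k)$), $\mathbf{V}(k)$ contains a non-empty Zariski open subset.

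The main obstacle I expect is the uniformity in part~(2): for a \emph{fixed} generic $\mathbf{f}(k)$ I must know that the generic $\mathbf{f}'$ used to attain $[L_{k+1},\ldots,L_n]_{X_{\mathbf{f}(k)}}$ can be chosen so that simultaneously the \emph{full} system attains the maximum $[L_1,\ldots,L_n]$ on $X$ — i.e.\ the two genericity conditions are compatible over a common Zariski open set in the product space. This requires care with the fibered structure: one must check that the function $\mathbf{f}(k) \mapsto \bigl(\text{max number of roots of the restricted system on } X_{\mathbf{f}(k)}\bigr)$ is constructible (via Theorem~\ref{th-strat}/Corollary~\ref{2.1} applied to the total family $\{X_{\mathbf{f}(n)}\}$ over $\mathbf{U}(n)$), so that its maximal-value locus is semi-algebraic and open, and that this maximal value equals $[L_1,\ldots,L_n]$ by the two-sided inequality above. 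Once constructibility and the fibered genericity are handled, extracting $\mathbf{V}(k)$ from Corollary~\ref{2.2} is routine.
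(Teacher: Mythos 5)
Your argument is correct and reproduces the paper's own proof in both structure and substance: part (1) is the same one-sided comparison via Proposition~\ref{3.3}, and for part (2) the paper likewise takes the Zariski open set $\mathbf{V}\subset\mathbf{L}$ of systems achieving the full index non-degenerately with non-degenerate $k$-subsystem (your incidence set $Z$), projects it to $\mathbf{L}(k)$, and extracts $\mathbf{V}(k)$ from $\pi(\mathbf{V})$ exactly as you do via Corollary~\ref{2.2}. The ``uniformity'' obstacle you anticipate in your closing paragraph does not in fact arise and the constructibility machinery via Theorem~\ref{th-strat} is unnecessary: for $\mathbf{f}(k)\in\pi(\mathbf{V})$, a \emph{single} $\mathbf{f}'$ making $(\mathbf{f}(k),\mathbf{f}')$ attain $[L_1,\dots,L_n]$ non-degenerately already yields $[L_{k+1},\dots,L_n]_{X_{\mathbf{f}(k)}}\geq[L_1,\dots,L_n]$, and equality then follows from part (1) with no compatibility required between that $\mathbf{f}'$ and the generic $\mathbf{f}'$ realizing the restricted index.
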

\begin{proof} The statement about non-consistent systems is obvious.
Let us prove the other statements. 1) If (\ref{eqn-3.5})
does not hold for a point ${\bf f}(k)$, then there are  $f_{k+1}\in
L_{k+1}, \dots, f_{n}\in L_{n}$  such that the system
$f_1=\ldots=f_n=0$ has more isolated solution on $X$ than the
intersection index $[L_1, \dots,L_n ]$, which is impossible. 2)
According to Proposition \ref{3.2} the collection of systems ${\bf f}
=(f_1,\dots,f_ n)\in {\bf L}$ which belong to the Zariski open set
$\bf U$ in Proposition \ref{3.4} and for which the subsystem
$f_1=\dots=f_k=0$ is non-degenerate, contains a Zariski open set
$\bf V \subset \bf U$. Let $\pi:{\bf L} \to {\bf L}(k)$ be
the projection $(f_1,\dots,f_n) \mapsto (f_1,\dots,f_k)$. Now we can
take ${\bf V}(k)$ to be any (non-empty) Zariski open subset in ${\bf L}(k)$
contained in $\pi(\bf V)$.
\end{proof}

Theorem \ref{3.5} allows us to reduce the computation of the
intersection index on a higher dimensional smooth \qp variety to the
computation of the intersection index on a lower dimensional smooth
subvariety. As we will see, it is not hard to establish the main properties of the
intersection index for affine curves. Using Theorem \ref{3.5} we
then easily obtain the corresponding properties for the intersection
index on smooth varieties of arbitrary dimension.

\subsection{Preliminaries on algebraic curves} \label{subsec-4}
Here we recall some basic facts about algebraic curves which we
will use later. Let $X$ be a smooth \qp curve (not necessarily
irreducible or complete).

\begin{Th}[Normalization of algebraic curves]
There is a unique (up to isomorphism) smooth compactification
$\overline{X}$ of $X$. The complement  $A = \overline{X} \setminus
X$, is a finite set, and any regular function on $X$ has a
meromorphic extension to $\overline{X}$.
\end{Th}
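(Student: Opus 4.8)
\emph{Reduction to the irreducible case.} The plan is to first dispose of the disconnected case. Since $X$ is smooth, its irreducible components are pairwise disjoint and finite in number, say $X = X_1 \sqcup \cdots \sqcup X_r$ with each $X_i$ a smooth irreducible quasi-projective curve. If each $X_i$ has a smooth compactification $\overline{X_i}$ with the stated properties, then $\overline{X} := \overline{X_1} \sqcup \cdots \sqcup \overline{X_r}$ is a smooth compactification of $X$, the complement $\overline{X} \setminus X = \bigsqcup_i (\overline{X_i} \setminus X_i)$ is finite, and a regular function on $X$ is an $r$-tuple of regular functions on the $X_i$, each extending meromorphically to $\overline{X_i}$. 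Uniqueness also passes to the disjoint union, since the components of a smooth projective curve containing $X$ are forced to be the closures of the $X_i$. So from now on assume $X$ is irreducible.

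\emph{Construction of $\overline{X}$.} Let $K = \mathbb{C}(X)$, a field of transcendence degree one over $\mathbb{C}$. Choose a projective closure $X \subset X' \subset \mathbb{P}^N$ and let $\overline{X} \to X'$ be the normalization. Then $\overline{X}$ is a projective curve; being normal and one-dimensional it is smooth, and $\mathbb{C}(\overline{X}) = K$. The open immersion $X \hookrightarrow X'$ lifts to a morphism $X \to \overline{X}$ because $X$ is smooth, hence normal, and therefore factors through the normalization. This morphism is birational and injective, and since $X$ is smooth it induces an isomorphism on local rings at each point; hence it is an open immersion onto a dense open subset of $\overline{X}$, with which we identify $X$.

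\emph{Finiteness of $A$, meromorphic extension, and uniqueness.} The complement $A = \overline{X} \setminus X$ is a proper closed subset of the irreducible curve $\overline{X}$, hence a finite set of points. Given $f \in \mathcal{O}(X)$: as $X$ is dense open in $\overline{X}$, $f$ is an element of $K = \mathbb{C}(\overline{X})$, i.e. a rational (hence meromorphic) function on $\overline{X}$ that is regular on $X$; at each point of $A$ the local ring of $\overline{X}$ is a discrete valuation ring, so $f$ is either regular there or has a pole of finite order — this is the asserted extension. For uniqueness, suppose $X \subset Z$ is another smooth complete curve containing $X$ as a dense open subset. Then $\mathbb{C}(Z) = K$, so the identity on $X$ defines a birational map $\overline{X} \ratmap Z$. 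A birational map between smooth projective curves extends to an isomorphism (both it and its inverse are everywhere defined, since a rational map from a smooth curve to a projective variety extends by the valuative criterion applied at each discrete valuation ring), and this isomorphism restricts to the identity on $X$; hence $\overline{X} \cong Z$ over $X$.

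\emph{Main obstacle.} The only genuinely non-formal input is that a smooth curve sits inside its projective model as an open subset — equivalently, that a birational morphism of smooth curves is an open immersion and that rational maps from smooth curves to projective varieties are everywhere defined. These facts rest on the valuative criterion of properness together with the fact that the local rings of a smooth curve are discrete valuation rings; once they are granted, the rest of the argument is bookkeeping.
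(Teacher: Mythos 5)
Your proof is correct. The paper does not prove this theorem itself — it simply cites it as a classical result (referencing Hartshorne, Chapter 1) — so there is no in-paper argument to compare against. Your argument is the standard textbook route: reduce to the irreducible case (valid, since smoothness forces the irreducible components to be disjoint and hence equal to the connected components), take a projective closure and normalize to produce $\overline{X}$, use the universal property of normalization to lift the open immersion $X \hookrightarrow X'$, identify $X$ with an open subset of $\overline{X}$ via the fact that comparable DVRs with the same fraction field coincide, and obtain uniqueness from the valuative criterion of properness (a rational map from a smooth curve to a complete variety extends everywhere). All steps are justified; the one cosmetic slip is that in the uniqueness paragraph you invoke the valuative criterion ``to a projective variety'' while the target $Z$ is only assumed complete — but the criterion applies to complete targets, and in any case a smooth complete curve is automatically projective, so nothing is lost.
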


One can find a proof of this classical result in most of the text
books in algebraic geometry (e.g. \cite[Chapter 1]{Hartshorne}).
This theorem allows us to find the number of zeros of a regular
function $g$ on $X$ with a prescribed behavior at infinity,
i.e. $\overline X \setminus X$. Indeed if  $g$ is not identically
zero on some irreducible component of the curve $X$, then the order
$\ord_a(g)$ of its meromorphic extension at a point $a\in \overline
X$ is well-defined. The function $g$ on the projective curve
$\overline X$ has the same number of roots
as the number of poles (counting with multiplicity).
Thus we have the following:
\begin{Prop} \label{4.1}
For every  regular  function $g$ on
a smooth \qp curve $X$ (which is not identically
zero on any irreducible component of $X$) the number of roots,
counting with multiplicity, is equal to  $- \sum_{a\in A}
\ord_a(g)$, where $\ord_a(g)$ is the order of the
meromorphic extension of the function $g$ to $\overline X$ at the point $a$.
\end{Prop}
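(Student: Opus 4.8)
The plan is to reduce the count of zeros (with multiplicity) of a regular function $g$ on the \qp curve $X$ to a statement about its meromorphic extension $\bar g$ on the smooth compactification $\overline{X}$, and then invoke the classical principle that on a smooth projective curve the number of zeros of a (non-zero) rational function equals the number of its poles. First I would use the normalization theorem quoted just above: the smooth compactification $\overline{X}$ exists, $A = \overline{X} \setminus X$ is finite, and every regular function on $X$ extends to a meromorphic function $\bar g$ on $\overline{X}$. Since $g$ is not identically zero on any irreducible component of $X$, the same holds for $\bar g$ on each component of $\overline{X}$, so $\ord_a(\bar g)$ is a well-defined integer for every $a \in \overline{X}$, equal to $0$ for all but finitely many $a$.

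Next I would apply the degree-zero property of principal divisors on a smooth projective curve: on each irreducible component $C$ of $\overline{X}$ the divisor of $\bar g|_C$ has degree zero, i.e. $\sum_{a \in C} \ord_a(\bar g) = 0$. Summing over all components gives $\sum_{a \in \overline{X}} \ord_a(\bar g) = 0$. Now split this sum over $A$ and over $X$. On $X$ the function $g$ is regular, so $\ord_x(g) \geq 0$ for all $x \in X$, and $\sum_{x \in X} \ord_x(g)$ is precisely the number of zeros of $g$ in $X$ counted with multiplicity (the zeros are the points where $\ord_x(g) > 0$, and their multiplicities are the orders). Rearranging $\sum_{x \in X} \ord_x(g) + \sum_{a \in A} \ord_a(\bar g) = 0$ yields that the number of roots of $g$ in $X$, counted with multiplicity, equals $-\sum_{a \in A} \ord_a(g)$, which is the claimed formula. (I note in passing that this also shows $\sum_{a \in A} \ord_a(g) \leq 0$, so the right-hand side is a non-negative integer, as it must be.)

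The main subtlety — rather than a deep obstacle — is bookkeeping about reducibility and about which points carry the zeros: one must be careful that $g$ may vanish identically on components of $\overline{X}$ that are not components of $X$ (this cannot happen, since such a component would meet $X$ in a dense open set where $g \not\equiv 0$), and that the degree-zero argument is applied component-by-component. Since the paper allows $X$ (hence $\overline{X}$) to be disconnected and reducible, I would state the divisor computation on each irreducible component separately and then add up. The only genuinely external input is the theorem that $\deg(\operatorname{div}(h)) = 0$ for a non-zero rational function $h$ on a smooth projective curve, which is standard and may be cited from the same references as the normalization theorem (e.g. \cite{Hartshorne}); everything else is elementary manipulation of the orders $\ord_a$.
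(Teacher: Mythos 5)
Your proof is correct and uses the same key fact the paper relies on: the degree of a principal divisor on a smooth projective curve is zero (the paper phrases this as ``the function $g$ on the projective curve $\overline{X}$ has the same number of roots as the number of poles''), combined with splitting the divisor sum into the $X$-part and the $A$-part. Your treatment of the reducible case, while carefully argued, addresses a non-issue, since every irreducible component of $\overline{X}$ is the closure of a component of $X$ (as $A$ is finite), so no new components can appear at infinity.
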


\subsection{Intersection index in semigroup ${\bf K}_{reg}(X)$ of an
algebraic curve $X$} \label{subsec-5}
Let $L\in {\bf K}_{reg}(X)$ and let $B=\{ f_i\}$ be a basis for $L$ such that none of the $f_i$
are identically zero on any irreducible component of the curve $X$. For
each point $a\in A=\overline X\setminus X$ denote by $\ord_a(L)$ the
minimum, over all functions in $B$, of the numbers $\ord_a(f_i)$.
Clearly, for every $g \in L$ we have $\ord_a(g) \geq \ord_a(L)$. The
collection of functions $g\in L$ whose order at the point $a$ is
strictly bigger than $\ord_a(L)$ form a proper subspace $L_a$ of
$L$.

\begin{Def} Define the {\it degree} of a subspace $L\in {\bf K}_{reg}(X)$
to be $$\sum\limits _{a\in A} -\ord_a(L),$$ and denoted it by $\deg(L)$.
\end{Def}

For each irreducible component $X_i$ of the curve $X$, denote the subspace of
$L$, consisting of all the functions identically equal to zero on
$X_i$, by $L_{X_i}$.  The subspace $L_{X_i}$ is a proper subspace of
$L$ because $L$ has no base locus.

The following is a corollary of Proposition \ref{4.1}.
\begin {Prop} \label{5.1}
If a function  $f\in L$ does not
belong to the union of the subspaces $L_{X_i}$, then $f$ has
finitely many roots on $X$. The number of roots of the function
$f$ (counting with multiplicity) is less than or equal to $\deg(L)$.
If $f$ is not in the union of
the subspaces $L_a$, $a\in A$, then the equality holds.
\end{Prop}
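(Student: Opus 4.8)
The plan is to reduce the statement about a space $L$ to a statement about a single function on the smooth projective curve $\overline{X}$, and then apply Proposition \ref{4.1} directly. First I would dispose of the finiteness claim: if $f \in L$ is not identically zero on any irreducible component $X_i$ (which is exactly the condition $f \notin \bigcup_i L_{X_i}$), then on each component $f$ has isolated zeros only, and since each component is a curve there are finitely many of them; summing over the finitely many components gives finiteness of the zero set of $f$ on $X$. By Proposition \ref{4.1}, the number of roots of $f$ on $X$, counted with multiplicity, equals $-\sum_{a \in A} \ord_a(g)$ where $g$ is the meromorphic extension of $f$ to $\overline{X}$ (applied component by component, then summed).

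For the inequality, the key observation is that for every $g \in L$ and every $a \in A$ we have $\ord_a(g) \ge \ord_a(L)$: indeed, writing $g$ in the basis $B = \{f_i\}$, the order of a sum at $a$ is at least the minimum of the orders of the summands, and each $\ord_a(f_i) \ge \ord_a(L)$ by definition of $\ord_a(L)$ as that minimum. (One should be slightly careful when $X$ is reducible: the extension and the order are taken separately on each component's closure, and the displayed formula in Proposition \ref{4.1} is the sum over all $a \in A$ — the argument goes through componentwise and then sums.) Hence
$$\#\{\text{roots of } f \text{ in } X\} = -\sum_{a \in A}\ord_a(f) \le -\sum_{a \in A}\ord_a(L) = \deg(L),$$
which is the claimed inequality.

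Finally, for the equality case, suppose $f \notin \bigcup_{a \in A} L_a$. For each $a \in A$, the subspace $L_a$ consists precisely of those $g \in L$ with $\ord_a(g) > \ord_a(L)$; so $f \notin L_a$ means $\ord_a(f) = \ord_a(L)$ (we cannot have $\ord_a(f) < \ord_a(L)$ since $\ord_a(L)$ is the minimum over $B$ and $f$ is a linear combination of the $f_i$). Then every term in the sum above is an equality, so $\#\{\text{roots of } f \text{ in } X\} = -\sum_{a \in A}\ord_a(L) = \deg(L)$. Note also that such an $f$ is automatically not in any $L_{X_i}$: a function vanishing identically on $X_i$ would have a pole-free extension that is identically zero there, forcing its order at any $a$ in the closure of $X_i$ to be $+\infty > \ord_a(L)$, putting it in $L_a$ — so the finiteness claim applies and the previous computation is valid.

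The only genuinely delicate point is the bookkeeping for reducible $X$: one must make sure that "order at $a$", "degree", and "number of roots" are all being added up consistently over the irreducible components, and that the hypothesis $f \notin \bigcup_i L_{X_i}$ (resp. $f \notin \bigcup_a L_a$) is exactly what lets the componentwise application of Proposition \ref{4.1} be summed without any term misbehaving. Everything else is a direct unwinding of the definitions of $\ord_a(L)$, $\deg(L)$, $L_a$, and $L_{X_i}$ together with Proposition \ref{4.1}.
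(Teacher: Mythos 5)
Your proof is correct and is exactly the derivation the paper has in mind when it says Proposition \ref{5.1} is ``a corollary of Proposition \ref{4.1}'': apply the counting formula to $f$, use $\ord_a(f)\ge\ord_a(L)$ (with equality iff $f\notin L_a$), and sum over $a\in A$. The one small caveat is your final remark deducing $f\notin\bigcup_i L_{X_i}$ from $f\notin\bigcup_a L_a$: this only works if each irreducible component of $X$ has at least one point at infinity in $A$; if some component were already complete, the deduction fails, and one should simply read the equality claim as carrying the first hypothesis along with it (as the paper evidently intends).
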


\begin{Prop} \label{5.2}
For $L, M \in {\bf K}_{reg}(X)$ we have
$[L]+[M]=[LM].$
\end{Prop}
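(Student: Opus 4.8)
The plan is to reduce the statement to the additivity of the degree function on the curve $X$. First I would observe that $LM$ again lies in ${\bf K}_{reg}(X)$ by Proposition \ref{1.1}, so $[LM]$ is defined. By Proposition \ref{5.1}, the intersection index of a single subspace on the \qp curve $X$ equals its degree, so $[L] = \deg(L)$, $[M] = \deg(M)$ and $[LM] = \deg(LM)$; thus the proposition is equivalent to $\deg(L) + \deg(M) = \deg(LM)$. Since by definition $\deg(\cdot) = \sum_{a \in A} -\ord_a(\cdot)$, where $A = \overline{X} \setminus X$ is finite, it suffices to prove, for every $a \in A$, the local identity
\[ \ord_a(LM) = \ord_a(L) + \ord_a(M). \]

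For the inequality $\ord_a(LM) \le \ord_a(L) + \ord_a(M)$ I would exhibit a good element of $LM$. Inside $L$, the functions vanishing identically on some irreducible component form finitely many proper subspaces (proper because $L$ has no base locus), and the functions $g$ with $\ord_a(g) > \ord_a(L)$ form the proper subspace $L_a$; a generic $f \in L$ avoids all of these, so $f$ is not identically zero on any component and $\ord_a(f) = \ord_a(L)$. Choosing such an $f \in L$ and, similarly, $g \in M$, the product $fg$ belongs to $LM$, is not identically zero on any component, and $\ord_a(fg) = \ord_a(f) + \ord_a(g)$ by the multiplicativity of the order of a meromorphic function at a point of $\overline{X}$; hence $\ord_a(LM) \le \ord_a(fg) = \ord_a(L) + \ord_a(M)$.

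For the reverse inequality I would use that $\ord_a$ satisfies the valuation inequality $\ord_a(h_1 + h_2) \ge \min(\ord_a(h_1), \ord_a(h_2))$. Pick a basis of $LM$ none of whose elements vanishes identically on a component (possible since $LM$ has no base locus, again by Proposition \ref{1.1}), and write any such basis element as $h = \sum_j f_j g_j$ with $f_j \in L$, $g_j \in M$. Each summand has $\ord_a(f_j g_j) = \ord_a(f_j) + \ord_a(g_j) \ge \ord_a(L) + \ord_a(M)$, using $\ord_a(f_j) \ge \ord_a(L)$ and $\ord_a(g_j) \ge \ord_a(M)$ with the convention that the order is $+\infty$ for a function identically zero on the component through $a$; hence $\ord_a(h) \ge \ord_a(L) + \ord_a(M)$, and taking the minimum over the basis gives $\ord_a(LM) \ge \ord_a(L) + \ord_a(M)$. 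Combining the two inequalities proves the local identity, and summing over $a \in A$ yields the proposition.

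The only point requiring care — though it is routine — is the bookkeeping for a possibly reducible curve $X$: one must consistently work with bases whose elements are not identically zero on any irreducible component, so that all the orders $\ord_a(\cdot)$ at the finitely many points $a \in A$ are defined, and one must check that the finitely many proper subspaces to be avoided in the genericity arguments do not exhaust $L$ or $M$ (which holds since $\c$ is infinite). Apart from this, the argument rests only on Proposition \ref{5.1}, Proposition \ref{1.1}, and the elementary identity $\ord_a(fg) = \ord_a(f) + \ord_a(g)$ for meromorphic functions on $\overline{X}$.
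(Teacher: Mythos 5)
Your proof is correct and takes essentially the same route as the paper's: both reduce the claim via $[L]=\deg(L)$ to the pointwise identity $\ord_a(LM)=\ord_a(L)+\ord_a(M)$, derived from the multiplicativity $\ord_a(fg)=\ord_a(f)+\ord_a(g)$. The paper states this in two lines; you fill in the two-sided inequality and the genericity/reducibility bookkeeping that the paper leaves implicit.
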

\begin{proof} For any point $a\in A$ and any two functions $f \in L$,
$g \in M$ the identity $\ord_a(f) + \ord_a(g) = \ord_a (fg)$ holds. Thus
we have $\ord_a(L) + \ord_a(M)= \ord_a (LM)$ which implies
$\deg(L) +\deg(M) = \deg(LM)$, and hence $[L]+[M]=[LM]$.
\end{proof}

Consider the map $-\Ord$ which associates to a subspace $L\in {\bf K}_{reg}(X)$
an integer valued function on the set $A$: the value of $-\Ord(L)$
at $a\in A$ equals $-\ord_a(L)$. The map $-\Ord$ is a homomorphism from
the multiplicative semigroup ${\bf K}_{reg}(X)$ to the additive group of
integer valued functions on the set $A$. Clearly the number $[L]$ can be
computed in terms of the homomorphism $-\Ord$ because $[L]= \deg(L)=
\sum_{a\in A} -\ord _aL $.

\begin{Prop} \label{5.3}
Assume that a regular function $g$
on the curve $X$ is integral over a
subspace $L\in {\bf K}_{reg}(X)$. Then at each point $a\in A$ we have
$$\ord_a g \geq \ord_a L.$$
\end{Prop}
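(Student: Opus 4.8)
The plan is to fix a point $a \in A = \overline{X}\setminus X$ and argue by contradiction, comparing the orders at $a$ of the terms in an integral-dependence equation $g^m + a_1 g^{m-1} + \cdots + a_m = 0$ for $g$, where $m > 0$ and $a_i \in L^i$. Since $X$ is smooth, its irreducible components are pairwise disjoint, so $a$ lies in the smooth compactification $\overline{X_j}$ of exactly one component $X_j$; restricting the integral equation to $X_j$ (all functions involved are regular on $X$, hence restrict), I may assume $X$ is irreducible. If $g$ is identically zero there is nothing to prove (read $\ord_a g = +\infty$), so I assume $g \not\equiv 0$, so that $\ord_a g$ is a well-defined integer.

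Next I would record the facts about $\ord_a$ that I need. By the identity $\ord_a(f) + \ord_a(h) = \ord_a(fh)$ used in the proof of Proposition~\ref{5.2}, the map $-\ord_a$ is additive on products, so $\ord_a(L^i) = i\,\ord_a(L)$ for every $i \ge 1$; and since $\ord_a(h) \ge \ord_a(M)$ for every element $h$ of a subspace $M$, it follows that $\ord_a(a_i) \ge i\,\ord_a(L)$ for each coefficient $a_i \in L^i$.

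Now suppose, for contradiction, that $\ord_a g < \ord_a L$. Writing $g^m = -(a_1 g^{m-1} + \cdots + a_m)$, for each $i = 1, \ldots, m$ the above estimates give
$$\ord_a(a_i g^{m-i}) \ge i\,\ord_a(L) + (m-i)\,\ord_a(g) > i\,\ord_a(g) + (m-i)\,\ord_a(g) = m\,\ord_a(g) = \ord_a(g^m),$$
where the strict inequality is precisely the assumption $\ord_a(g) < \ord_a(L)$ multiplied by $i \ge 1$. Thus every summand on the right-hand side has order strictly larger than $\ord_a(g^m)$, so $\ord_a(g^m) = \ord_a\!\big(\textstyle\sum_{i=1}^m a_i g^{m-i}\big) \ge \min_{1 \le i \le m}\ord_a(a_i g^{m-i}) > \ord_a(g^m)$, a contradiction. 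Hence $\ord_a g \ge \ord_a L$.

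I do not anticipate a real obstacle: once the homomorphism property of $-\ord_a$ (already essentially proved for Proposition~\ref{5.2}) is available, the inequality is pure bookkeeping with orders. The only points needing a little care are the reduction to the irreducible component whose closure contains $a$ and the degenerate case in which $g$ vanishes identically on that component.
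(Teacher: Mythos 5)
Your proof is correct and follows essentially the same route as the paper: assume $\ord_a g < \ord_a L$, use $\ord_a(a_i) \geq i\,\ord_a(L)$ to show every summand $a_i g^{m-i}$ has order strictly greater than $m\,\ord_a(g) = \ord_a(g^m)$, and derive a contradiction from the ultrametric-type inequality for $\ord_a$. The only difference is that you explicitly handle the reduction to the component of $\overline{X}$ containing $a$ and the degenerate case $g\equiv 0$, which the paper leaves implicit; this is reasonable extra care and does not change the substance of the argument.
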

\begin{proof} Let $g^n +f_1g^{n-1}+\dots +f_n=0$ where
$f_i \in L^i$. Suppose $\ord_a g =k < \ord_a L$.
Since $g^n = -f_1g^{n-1} - \cdots - f_n$ we have
$nk = \ord_a(g^n) \geq \min\{\ord_a (f_1g^{n-1}), \ldots, \ord_a(f_n)\}$.
That is, for some $i$, $nk \geq \ord_a(f_i) + k(n-i)$.
But for every $i$, $\ord_a(f_i g^{n-i}) = \ord_a(f_i) + \ord_a(g^{n-i})
> i \cdot \ord_a(L) + k(n-i) > nk$. The contradiction proves the claim.
\end{proof}

\begin{Cor} \label{5.4}
Assume that a regular function $g$ on the curve $X$ is integral over
a subspace  $L\in {\bf K}_{reg}(X)$. Consider the subspace $M \in
{\bf K}_{reg}(X)$ spanned by $g$ and $L$. Then: 1) At each point
$a\in A$ the equality $\ord_a(L)=\ord_a(M)$ holds. 2) $[L]=[M]$. 3)
For each subspace $N \in {\bf K}_{reg}(X)$ we have $[LN]=[MN]$.
\end{Cor}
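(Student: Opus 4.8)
The plan is to deduce all three statements of Corollary \ref{5.4} directly from Proposition \ref{5.3}, together with the additivity of degree (Proposition \ref{5.2}) and the description of $[L]$ in terms of $-\Ord(L)$. The key observation is that passing from $L$ to $M = \Span(L \cup \{g\})$ can only decrease the orders $\ord_a$, and Proposition \ref{5.3} says it cannot decrease them at all.

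\medskip\noindent\textit{Part 1.} First I would note that since $L \subseteq M$, for every point $a \in A$ we have $\ord_a(M) \leq \ord_a(L)$, because the minimum defining $\ord_a(M)$ is taken over a basis of a larger space (equivalently, $\ord_a(M) = \min_{h \in M, h \neq 0} \ord_a(h)$, and the set of such $h$ contains those coming from $L$). For the reverse inequality, it suffices to check that $\ord_a(h) \geq \ord_a(L)$ for every nonzero $h \in M$. Writing $h = f + cg$ with $f \in L$ and $c \in \c$: if $c = 0$ this is clear since $f \in L$; if $c \neq 0$, then $\ord_a(cg) = \ord_a(g) \geq \ord_a(L)$ by Proposition \ref{5.3}, and also $\ord_a(f) \geq \ord_a(L)$, so $\ord_a(h) = \ord_a(f + cg) \geq \min\{\ord_a(f), \ord_a(cg)\} \geq \ord_a(L)$. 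Hence $\ord_a(M) \geq \ord_a(L)$, giving equality. (One should take the basis of $M$ to consist of a basis of $L$ together with $g$, and verify none of these is identically zero on a component of $X$ — this holds for the basis of $L$ by hypothesis, and $g$ is not identically zero on any component because it is integral over $L$ and $L$ has no base locus, hence $g = 0$ on a component would force, via the integral equation, all elements of $L$ to vanish there too.)

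\medskip\noindent\textit{Part 2.} Given Part 1, statement 2) is immediate: $[L] = \deg(L) = \sum_{a \in A} -\ord_a(L) = \sum_{a \in A} -\ord_a(M) = \deg(M) = [M]$.

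\medskip\noindent\textit{Part 3.} For statement 3), fix $N \in {\bf K}_{reg}(X)$. By Proposition \ref{5.2} we have $[LN] = [L] + [N]$ and $[MN] = [M] + [N]$; combining with statement 2) gives $[LN] = [L] + [N] = [M] + [N] = [MN]$. (Alternatively, one checks $\ord_a(LN) = \ord_a(L) + \ord_a(N) = \ord_a(M) + \ord_a(N) = \ord_a(MN)$ directly, using the identity $\ord_a(fg) = \ord_a(f) + \ord_a(g)$ as in the proof of Proposition \ref{5.2}.)

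\medskip I do not expect a serious obstacle here: the whole corollary is a formal consequence of Proposition \ref{5.3} once one has the elementary fact that orders are unchanged under adjoining an integral element. The only point requiring a little care is the bookkeeping in Part 1 — confirming that the new spanning set is still an admissible basis (no component-wise vanishing) and that $\ord_a(M)$ is genuinely the minimum of orders over all of $M$, not just over one chosen basis. This is where I would be most careful, but it is routine.
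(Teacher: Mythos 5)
Your proof is correct in substance and follows exactly the route the paper intends (Corollary \ref{5.4} is stated without proof, and the expected derivation is precisely: Part 1 from Proposition \ref{5.3} plus the obvious inequality $\ord_a(M)\le\ord_a(L)$, Part 2 from the degree formula, Part 3 from Proposition \ref{5.2}). However, the parenthetical justification at the end of Part 1 contains a genuine error: it is \emph{not} true that a function integral over $L$ with no base locus must be nonvanishing on every component. For instance, take $X$ to be a disjoint union of two copies of $\c^*$, $L = \c\cdot 1$, and $g = (0,1)$; then $g$ satisfies $g^2 - g = 0$ with coefficients in $L$, so $g$ is integral over $L$, yet $g$ vanishes identically on the first component. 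The integral relation $g^m + a_1 g^{m-1}+\cdots + a_m = 0$ restricted to a component on which $g=0$ only forces that particular $a_m$ to vanish there, not all of $L$. Fortunately, the claim is also unnecessary: you do not need the specific spanning set $B_L\cup\{g\}$ to be an admissible basis of $M$. Any basis $B_M$ of $M$ with no element identically zero on a component exists by the usual avoidance-of-proper-subspaces argument, and every $h\in B_M$ still decomposes as $h=f+cg$ with $f\in L$, $c\in\c$; the estimate $\ord_a(h)\ge\min\{\ord_a(f),\ord_a(g)\}\ge\ord_a(L)$ then goes through verbatim (interpreting $\ord_a$ of a function vanishing on the component of $a$ as $+\infty$, which is consistent with Proposition \ref{5.3}). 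With the parenthetical deleted and replaced by this remark, the proof is complete and matches the paper.
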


\subsection{Properties of the intersection index which can be deduced from the curve case}
\label{subsec-6}
\begin{Th}[Multi-linearity] \label{6.1}
Let $L_1', L_1'', L_2, \ldots, L_n \in {\bf K}_{reg}(X)$
and put $L_1= L_1'L_1''$. Then
$$[L_1,\dots,L_n]=[L'_1,\dots,L_n]+[L''_1,\dots,L_n].$$
\end{Th}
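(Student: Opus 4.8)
The plan is to reduce the multi-linearity statement on the $n$-dimensional smooth variety $X$ to the case of algebraic curves, where it has already been established in Proposition \ref{5.2}. The key tool for this reduction is Theorem \ref{3.5}, which lets us cut down dimension by imposing $n-1$ generic equations and pass to a $1$-dimensional smooth subvariety on which all the relevant intersection indices are computed by the degree function $\deg$ of Section \ref{subsec-5}. So the first step is to choose, for the tuple $(L_1'L_1'', L_2, \ldots, L_n)$, the tuple $(L_1', L_2, \ldots, L_n)$, and the tuple $(L_1'', L_2, \ldots, L_n)$ simultaneously, a common generic choice of the last $n-1$ defining functions.

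Concretely, I would apply Theorem \ref{3.5} with $k = n-1$ to the subspaces $L_2, \ldots, L_n$ (which appear in all three tuples), obtaining a non-empty Zariski open set ${\bf V}(n-1) \subset L_2 \times \cdots \times L_n$ such that for ${\bf f}(n-1) = (f_2, \ldots, f_n)$ in this set, the system $f_2 = \cdots = f_n = 0$ either has no solution (in which case all three intersection indices vanish and the identity is trivial) or defines a smooth \qp curve $C = X_{{\bf f}(n-1)}$, and moreover
$$[L_1'L_1'', L_2, \ldots, L_n]_X = [L_1'L_1'']_C, \quad [L_1', L_2, \ldots, L_n]_X = [L_1']_C, \quad [L_1'', L_2, \ldots, L_n]_X = [L_1'']_C.$$
Here I must be slightly careful: Theorem \ref{3.5}(2) produces a Zariski open set for each individual $n$-tuple, so I would intersect the three resulting open sets (together with ${\bf U}(n-1)$ for all three) to get a single non-empty Zariski open ${\bf V}(n-1)$ on which all three equalities hold at once; a non-empty Zariski open subset of an irreducible space is dense and such finite intersections remain non-empty. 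Note also that the restriction of $L_1'L_1''$ to $C$ equals the product $(L_1'|_C)(L_1''|_C)$ inside ${\bf K}_{reg}(C)$, since restriction is an algebra homomorphism and hence commutes with the span-of-products operation.

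Once we are on the curve $C$, Proposition \ref{5.2} gives $[L_1']_C + [L_1'']_C = [L_1'L_1'']_C$ directly (applied to $L = L_1'|_C$, $M = L_1''|_C$), and combining with the three displayed equalities yields
$$[L_1'L_1'', L_2, \ldots, L_n]_X = [L_1']_C + [L_1'']_C = [L_1', L_2, \ldots, L_n]_X + [L_1'', L_2, \ldots, L_n]_X,$$
which is the desired identity. I expect the main obstacle to be purely bookkeeping: ensuring that a single generic choice of $(f_2,\dots,f_n)$ works simultaneously for all three tuples (handled by intersecting finitely many non-empty Zariski opens), and verifying that the restricted subspaces $L_1'|_C$, $L_1''|_C$ still lie in ${\bf K}_{reg}(C)$ — i.e. have no base locus on $C$ — which follows from the remark in Section \ref{subsec-1} that $L \in KR(X)$ implies $L \in KR(Y)$ for $Y \subset X$, and from the fact that a generic such $C$ is not contained in the (proper) base locus-free locus issues. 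No genuinely hard analytic or geometric input is needed beyond what Theorem \ref{3.5} and Proposition \ref{5.2} already supply.
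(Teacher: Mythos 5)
Your proposal is correct and follows essentially the same route as the paper's own proof: reduce to a generic curve $C = \{f_2 = \cdots = f_n = 0\}$ via Theorem \ref{3.5}, then invoke Proposition \ref{5.2} on that curve. The paper leaves implicit the need to take a common generic $(f_2,\dots,f_n)$ valid simultaneously for the three tuples $(L_1'L_1'',L_2,\dots,L_n)$, $(L_1',L_2,\dots,L_n)$, $(L_1'',L_2,\dots,L_n)$; you make this explicit by intersecting the three Zariski-open sets, which is a minor but harmless elaboration of the same argument.
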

\begin{proof} Consider three $n$-tuples $(L'_1,\dots,L_n)$,
$(L''_1,\dots,L_n)$ and $(L'_1L''_1,\dots,L_n)$ of elements of the
semigroup ${\bf K}_{reg}(X)$. If a generic system $f_2=\cdots=f_n=0$
where $f_2\in L_2,\dots, f_n\in L_n$ is non-consistent then the three indices
appeared in the theorem are equal to zero. Thus the theorem holds in this case.
Otherwise, according to Theorem \ref{3.5} there is an $(n-1)$-tuple $f_2\in L_2,\dots, f_n\in L_n$, such that
the system $f_2=\dots = f_n=0$ is non-degenerate and defines a curve
$Y\subset X$ such that $[L'_1,\dots,L_n]=[L_1']_Y$,
$[L''_1,\dots,L_n]=[L_1'']_Y$ and
$[L'_1L''_1,\dots,L_n]=[L_1'L''_1]_Y$. Using Proposition \ref{5.2}
we obtain $[L_1'L''_1]_Y= [L_1']_Y+[L''_1]_Y$ and theorem is proved.
\end{proof}

\begin{Cor} \label{th-index-kL}
Let $L_1, L_2, \ldots, L_n \in {\bf K}_{reg}(X)$, then for any $k > 0$ we have
$$[L_1^k, L_2, \ldots, L_n] = k[L_1, L_2, \ldots, L_n].$$
\end{Cor}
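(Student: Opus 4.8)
The plan is to derive Corollary \ref{th-index-kL} directly from the multi-linearity established in Theorem \ref{6.1} by an elementary induction on $k$. The base case $k=1$ is trivial. For the inductive step, observe that $L_1^{k} = L_1^{k-1} L_1$, so applying Theorem \ref{6.1} with $L_1' = L_1^{k-1}$ and $L_1'' = L_1$ gives
$$[L_1^k, L_2, \ldots, L_n] = [L_1^{k-1}, L_2, \ldots, L_n] + [L_1, L_2, \ldots, L_n],$$
and then the inductive hypothesis $[L_1^{k-1}, L_2, \ldots, L_n] = (k-1)[L_1, L_2, \ldots, L_n]$ finishes the argument. One should remark that $L_1^{k-1}$ and $L_1$ both lie in ${\bf K}_{reg}(X)$ (a product of finite dimensional base-point-free subspaces is again finite dimensional and base-point-free, by Proposition \ref{1.1}), so Theorem \ref{6.1} genuinely applies at each stage.

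There is essentially no obstacle here: the statement is a purely formal consequence of additivity of the index in a single slot under the semigroup multiplication. The only thing worth being careful about is the bookkeeping of which argument is being varied — here it is always the first argument, with $L_2, \ldots, L_n$ held fixed throughout — and the fact that symmetry (Theorem \ref{3.1}, part 1) lets one conclude the analogous identity for any of the $n$ slots, though that is not needed for the stated corollary. I would present the proof in two or three sentences, invoking Theorem \ref{6.1} and induction, without belaboring the trivial verification that the relevant subspaces remain in ${\bf K}_{reg}(X)$.

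\begin{proof}
We argue by induction on $k$, the case $k=1$ being trivial. Assume the identity holds for $k-1$. Since $L_1^k = L_1^{k-1} L_1$ and both $L_1^{k-1}$ and $L_1$ belong to ${\bf K}_{reg}(X)$ by Proposition \ref{1.1}, Theorem \ref{6.1} (multi-linearity in the first argument) gives
$$[L_1^k, L_2, \ldots, L_n] = [L_1^{k-1}, L_2, \ldots, L_n] + [L_1, L_2, \ldots, L_n].$$
By the induction hypothesis the first term on the right equals $(k-1)[L_1, L_2, \ldots, L_n]$, and hence $[L_1^k, L_2, \ldots, L_n] = k[L_1, L_2, \ldots, L_n]$, as claimed.
\end{proof}
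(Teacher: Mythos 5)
Your proof is correct and matches what the paper intends: the corollary is stated without proof immediately after Theorem \ref{6.1}, as an immediate consequence of multi-linearity, and your induction on $k$ (with the remark via Proposition \ref{1.1} that $L_1^{k-1}$ stays in ${\bf K}_{reg}(X)$) is exactly the routine argument the authors leave to the reader.
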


By the above corollary, given some $k>0$, if we know the
intersection index of the space $L_1^k$ and any $(n-1)$-tuple of
subspaces $L_2, \ldots, L_n$, we can recover the intersection index of
$L_1$ and $L_2, \ldots L_n$. The computation of the intersection
index of $L_1^k$ might be easier since it contains more functions.

\begin{Th}[Addition of integral elements]
\label{6.2}
Let $L_1 \in {\bf K}_{reg}(X)$ and let
$M_1\in {\bf K}_{reg}(X)$ be a subspace spanned by $L_1 \in {\bf K}_{reg}(X)$ and
a regular function $g$ which is integral over $L_1$. Then for any
$(n-1)$-tuple $L_2,\dots,L_n\in {\bf K}_{reg}(X)$ we have
$$[M_1,L_2,\ldots,L_n]=[L_1,L_2,\ldots,L_n].$$
\end{Th}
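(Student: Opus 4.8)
The strategy is to reduce Theorem~\ref{6.2} to the curve case, exactly as was done for multi-linearity in Theorem~\ref{6.1}, and then invoke the already-established fact (Corollary~\ref{5.4}, part~3) that adding an integral element does not change the degree on a curve. First I would dispose of the trivial case: if a generic system $f_2 = \cdots = f_n = 0$ with $f_i \in L_i$ is non-consistent, then both $[M_1, L_2, \ldots, L_n]$ and $[L_1, L_2, \ldots, L_n]$ vanish by Theorem~\ref{3.5}, and there is nothing to prove. So assume the generic system $f_2 = \cdots = f_n = 0$ is consistent.

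\medskip

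In the consistent case, I would apply Theorem~\ref{3.5} (part~2) to produce an $(n-1)$-tuple $(f_2, \ldots, f_n) \in L_2 \times \cdots \times L_n$ such that the system $f_2 = \cdots = f_n = 0$ is non-degenerate and cuts out a smooth \qp curve $Y \subset X$ for which simultaneously
$$[M_1, L_2, \ldots, L_n]_X = [M_1]_Y \quad \text{and} \quad [L_1, L_2, \ldots, L_n]_X = [L_1]_Y.$$
To get \emph{both} equalities at the same point $(f_2, \ldots, f_n)$, I would intersect the two Zariski open sets ${\bf V}(1)$ coming from Theorem~\ref{3.5} applied to the $n$-tuples $(M_1, L_2, \ldots, L_n)$ and $(L_1, L_2, \ldots, L_n)$ respectively (these are dense open subsets of the same parameter space $L_2 \times \cdots \times L_n$, since the tail $L_2, \ldots, L_n$ is common to both tuples), and pick a point in the intersection.

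\medskip

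Now I need to know that the restriction of $g$ to $Y$ is integral over the restriction of $L_1$ to $Y$. But integrality is expressed by a single polynomial identity $g^m + a_1 g^{m-1} + \cdots + a_m = 0$ with $a_i \in L_1^i$, and such an identity of regular functions on $X$ restricts to the same identity of regular functions on $Y$; moreover $a_i|_Y \in (L_1|_Y)^i$. Hence $g|_Y$ is integral over $L_1|_Y$, and $M_1|_Y$ is the subspace of ${\bf K}_{reg}(Y)$ spanned by $g|_Y$ and $L_1|_Y$ (note that $M_1|_Y$ still has no base locus on $Y$ since $L_1|_Y \subseteq M_1|_Y$ already has none). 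By Corollary~\ref{5.4}, part~2, we conclude $[M_1]_Y = [L_1]_Y$, and combining with the two displayed equalities gives $[M_1, L_2, \ldots, L_n] = [L_1, L_2, \ldots, L_n]$, as desired.

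\medskip

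The only point that requires any care — and the step I would flag as the main obstacle — is the simultaneity of the two reductions to the same curve $Y$. The resolution above (intersecting the two dense open sets in the common parameter space $L_2 \times \cdots \times L_n$) works, but one must be slightly attentive that $Y$, which is defined purely by the equations $f_2 = \cdots = f_n = 0$, depends only on $(f_2, \ldots, f_n)$ and not on any function from the first slot; this is what makes it legitimate to ask that one and the same $Y$ compute both indices. Everything else is routine: the reduction machinery of Theorem~\ref{3.5}, the stability of integrality under restriction, and the curve-level statement Corollary~\ref{5.4}.
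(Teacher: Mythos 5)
Your proposal is correct and follows essentially the same route as the paper's own proof: dispose of the non-consistent case, reduce to a common curve $Y$ via Theorem~\ref{3.5}, and invoke Corollary~\ref{5.4}. You also usefully spell out two details the paper leaves tacit — that one must intersect the two dense open subsets of $L_2\times\cdots\times L_n$ to obtain a single curve $Y$ computing both indices, and that the integrality relation for $g$ over $L_1$ restricts to an integrality relation for $g|_Y$ over $L_1|_Y$ (your label ``${\bf V}(1)$'' is a slight notational slip for the open set in ${\bf L}(n-1)$ after reordering the tuple, but the intent is unambiguous).
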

\begin{proof} Consider two $n$-tuples $(L_1,L_2,\dots,L_n)$,
$(M_1,L_2,\dots,L_n)$ of ${\bf K}_{reg}(X)$. If a generic system $f_2= \cdots =f_n=0$,
where $f_2 \in L_2, \dots, f_n \in L_n$ is non-consistent, then the two indices
appearing in the theorem are equal to zero. Thus the theorem holds this case.
Otherwise, according to Theorem \ref{3.5} there is an $(n-1)$-tuple $(f_2, \ldots, f_n)$, $f_i \in L_i$,
such that the system $f_2=\dots = f_n=0$ is non-degenerate and
defines a curve $Y\subset X$ with
$[L_1,L_2,\dots,L_n]=[L_1]_Y$ and $[G_1,L_2 \dots,L_n]=[G_1]_Y$. Using
Corollary \ref{5.4} we obtain $[L_1]_Y= [M_1]_Y$ as required.
\end{proof}

\subsection{Properties of the intersection index which can be deduced
from the surface case} \label{subsec-7}
Motivated by the terminology of line bundles, let us say that a subspace
$L \in {\bf K}_{reg}(X)$ is {\it very ample}
if the Kodaira map $\Phi_L:X \to \p(L^*)$ is an embedding.
Note that $L$ is very ample if and only if for any $k > 0$, $L^k$ is very ample.
The very ample subspaces form a subsemigroup of ${\bf K}_{reg}(X)$.
Let $Y\subset X$ be a smooth algebraic subvariety of a smooth
variety $X$ and let  $L \in {\bf K}_{reg}(X)$ be a
very ample subspace. Then the restriction of $L$ to $Y$
is a very ample space in  ${\bf K}_{reg}(Y)$.

\begin{Th}[A version of the Bertini-Lefschetz theorem]
Let $X$ be a smooth irreducible $n$-dimensional \qp variety
and let $L_1,\dots,L_k\in {\bf K}_{reg}(X)$, $k<n$, be very ample subspaces.
Then there is a Zariski open set ${\bf U}(k)$ in ${\bf
L}(k)=L_1\times\dots\times L_k$ such that for each point  ${\bf
f}(k)= (f_1,\dots,f_k)\in {\bf U}(k)$ the variety defined in $X$ by
the system of equations $f_1=\dots=f_k=0$ is smooth and
irreducible.
\end{Th}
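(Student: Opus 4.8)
The plan is to induct on $k$, reducing to the case of one equation on a variety of dimension $\ge 2$, applying the classical Bertini irreducibility and smoothness theorems there, and then upgrading the resulting ``generic'' statement to a Zariski open set by means of the semi-algebraic machinery recalled in Section~\ref{subsec-2}.

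\textbf{The case $k=1$.} Here $n\ge 2$ since $1=k<n$. As $L_1$ is very ample, $\Phi_{L_1}\colon X\to\p(L_1^*)$ is an embedding onto a smooth, nondegenerate, locally closed subvariety; let $\overline X\subset\p(L_1^*)$ be its Zariski closure, an irreducible projective variety of dimension $n\ge 2$. Every $f_1\in L_1\setminus\{0\}$ determines a hyperplane $H_{f_1}\subset\p(L_1^*)$ with $X_{f_1}\cong H_{f_1}\cap\Phi_{L_1}(X)$. By the classical Bertini theorems, for $f_1$ outside a proper algebraic subset of $L_1$ the section $H_{f_1}\cap\overline X$ is irreducible of dimension $n-1$ (this uses $\dim\overline X\ge 2$), the hyperplane $H_{f_1}$ is transverse to $\Phi_{L_1}(X)$ along $\Phi_{L_1}(X)$, and $H_{f_1}$ contains no component of the proper closed set $\overline X\setminus\Phi_{L_1}(X)$. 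For such $f_1$, $H_{f_1}\cap\Phi_{L_1}(X)$ is a dense open subset of the irreducible variety $H_{f_1}\cap\overline X$, hence irreducible, and it is smooth because $\Phi_{L_1}(X)$ lies in the smooth locus of $\overline X$ and the intersection is transverse. Thus $X_{f_1}$ is smooth and irreducible of dimension $n-1$ for all $f_1$ in a non-empty Zariski open subset of $L_1$.

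\textbf{Inductive step.} Let $2\le k<n$ and assume the theorem for $k-1$ subspaces. Choose $(f_1,\dots,f_{k-1})$ in the Zariski open set ${\bf U}(k-1)\subset{\bf L}(k-1)$ given by the inductive hypothesis, so that $Y=X_{f_1,\dots,f_{k-1}}$ is smooth and irreducible of dimension $n-k+1\ge 2$. As noted just before the theorem, the restriction of $L_k$ to $Y$ is a very ample subspace of ${\bf K}_{reg}(Y)$; since $\dim Y\ge 2$, the case $k=1$ applied to $(Y,L_k|_Y)$ shows that for $f_k$ in a non-empty Zariski open subset of $L_k$ the variety $X_{f_1,\dots,f_k}$ is smooth and irreducible of dimension $n-k$.

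\textbf{From ``generic'' to Zariski open.} Let $G\subset{\bf L}(k)$ be the set of ${\bf f}=(f_1,\dots,f_k)$ for which $X_{\bf f}$ is smooth and irreducible of dimension $n-k$. The subset of ${\bf L}(k)$ on which $X_{\bf f}$ is smooth of pure dimension $n-k$ is complex semi-algebraic: by the Jacobian criterion it is the complement of the image of an algebraic set, hence constructible by the (complex) Tarski theorem cited in Section~\ref{subsec-2}. On that subset a fibre is irreducible if and only if it is connected, because a smooth variety is irreducible exactly when it is connected; and by Theorem~\ref{th-strat} the number of connected components of $X_{\bf f}$ is constant along each stratum of a suitable finite semi-algebraic stratification of the parameter space, so $G$ is a union of such strata and is semi-algebraic. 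On the other hand, the inductive step shows that over the Zariski open set ${\bf U}(k-1)$ the fibre of $G$ inside $\{(f_1,\dots,f_{k-1})\}\times L_k$ is a non-empty Zariski open subset of $L_k$; by Fubini, $G$ has full Lebesgue measure in ${\bf L}(k)$, in particular it is not a set of measure zero. By Corollary~\ref{2.2}, $G$ contains a non-empty Zariski open subset, which we take to be ${\bf U}(k)$.

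The main difficulty will be the bookkeeping in the two places where algebraic geometry over a non-complete variety interacts with the semi-algebraic framework: in the case $k=1$, where one must pass to the projective closure $\overline X$, apply Bertini there, and check that the generic hyperplane meets the open part $\Phi_{L_1}(X)$ in a dense (hence smooth and irreducible) subset; and in the last step, where the semi-algebraicity of the ``smooth and irreducible'' locus $G$ rests on combining the Jacobian criterion, the Tarski theorem, and the local constancy of the number of connected components from Theorem~\ref{th-strat}.
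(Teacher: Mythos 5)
The paper does not actually supply a proof of this theorem: it simply cites \cite[Theorem 8.18]{Hartshorne}, which is Bertini's theorem for a single hyperplane section of a smooth \emph{projective} variety. Your outline therefore cannot be compared line-by-line with the paper's argument, but it is exactly the reduction that the paper leaves to the reader, and it is correct. The $k=1$ step handles the quasi-projective issue in the right way: embed $X$ via $\Phi_{L_1}$, run Bertini irreducibility and Bertini smoothness on the projective closure $\overline X$ (noting that $\Phi_{L_1}(X)$ sits inside the smooth locus of $\overline X$ because it is open in $\overline X$ and smooth), and add the third generic condition that $H_{f_1}$ contains no component of $\overline X\setminus \Phi_{L_1}(X)$, so that $H_{f_1}\cap \Phi_{L_1}(X)$ is a dense open subset of the irreducible $(n-1)$-dimensional variety $H_{f_1}\cap\overline X$ and hence non-empty, smooth and irreducible. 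The induction correctly uses the remark made in the paper just before the statement that very ampleness persists under restriction to a smooth subvariety $Y$, and that $\dim Y = n-k+1\geq 2$ whenever $k<n$. The final upgrade from a ``generic'' to a genuinely Zariski open set is the one place where something non-trivial had to be added, and you invoke the right tools from Section~\ref{subsec-2}: the smooth-of-expected-dimension locus is constructible by Chevalley's theorem (complex Tarski), the number of connected components (equivalently, for smooth fibers, the number of irreducible components) is constant along the strata of Theorem~\ref{th-strat}, so $G$ is semi-algebraic, and Corollary~\ref{2.2} finishes. One small simplification worth noting: the Fubini/measure step is avoidable, since the union $\bigcup_{{\bf g}\in{\bf U}(k-1)}\{{\bf g}\}\times U_{\bf g}$ produced by the induction is already Zariski dense in ${\bf L}(k)$ (any polynomial vanishing on it vanishes on ${\bf U}(k-1)\times L_k$, hence identically), so case 1) of Corollary~\ref{2.2} applies directly; but both routes are valid.
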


A proof of the Bertini-Lefschetz theorem can be found in \cite[Theorem
8.18]{Hartshorne}

\begin{Th}[A version of Hodge inequality] \label{7.1}
Let $X$ be a smooth irreducible surface and let
$L_1, L_2\in {\bf K}_{reg}(X)$ be very ample subspaces. Then we have
$[L_1,L_2]^2\geq [L_1,L_1][L_2,L_2]$.
\end{Th}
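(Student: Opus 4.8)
The plan is to reduce the inequality $[L_1,L_2]^2\geq[L_1,L_1][L_2,L_2]$ on the surface $X$ to the classical Hodge index theorem on a smooth projective surface, by replacing $X$ with a suitable complete model on which the Kodaira maps of $L_1$ and $L_2$ become morphisms. First I would use the hypothesis that $L_1$ and $L_2$ are very ample: the Kodaira maps $\Phi_{L_1}:X\to\p(L_1^*)$ and $\Phi_{L_2}:X\to\p(L_2^*)$ are embeddings, so $X$ is identified with a (locally closed, smooth) surface in a product of projective spaces. Taking the closure $\overline{X}$ of the image of $X$ under $\Phi_{L_1}\times\Phi_{L_2}$ and then passing to a resolution of singularities, I obtain a smooth irreducible \emph{projective} surface $\widetilde{X}$ together with a birational morphism to $\overline{X}$ and regular maps to $\p(L_1^*)$ and $\p(L_2^*)$ extending the Kodaira maps. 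Pulling back the hyperplane class from each $\p(L_i^*)$ gives line bundles, equivalently divisor classes $D_1$, $D_2$ on $\widetilde{X}$; these are nef (pullbacks of very ample classes) and their restrictions to the open set $X\subset\widetilde{X}$ carry exactly the data of $L_1$, $L_2$.

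The key step is then to identify the intersection indices $[L_i,L_j]$ with the classical intersection numbers $(D_i\cdot D_j)$ on $\widetilde{X}$. For this I would argue that a generic $f\in L_i$, viewed as a section, cuts out on $X$ a curve whose closure in $\widetilde{X}$ is linearly equivalent to $D_i$ plus possibly a fixed effective divisor supported on $\widetilde{X}\setminus X$ and on the exceptional locus; since the resolution and compactification only add components \emph{outside} $X$, and since the definition of $[L_1,L_2]$ counts only solutions lying in $X$, the points in $\widetilde{X}\setminus X$ contribute nothing. More precisely, for generic $f_1\in L_1$, $f_2\in L_2$ the zero divisors $Z(f_1)$, $Z(f_2)$ on $\widetilde{X}$ meet transversally (Propositions \ref{3.2} and \ref{3.4} applied on $X$, plus a Bertini argument near the boundary to avoid the finitely many bad points of $\widetilde{X}\setminus X$), all their intersection points lie in $X$, and their number equals $[L_1,L_2]$; on the other hand this number is $(Z(f_1)\cdot Z(f_2))=(D_1\cdot D_2)$ since both $Z(f_i)$ are linearly equivalent to $D_i$ on the complete surface $\widetilde{X}$. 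The same argument with $L_1=L_2$ (using Corollary \ref{th-index-kL} if one wants to pass to powers to clear denominators) gives $[L_i,L_i]=(D_i^2)$.

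Once these identifications are in place, the inequality is exactly the Hodge index theorem: on a smooth projective surface, for any two divisor classes $D_1$, $D_2$ with $(D_1^2)>0$ one has $(D_1\cdot D_2)^2\geq(D_1^2)(D_2^2)$, and this extends by a limiting/continuity argument to nef classes, with the case $(D_1^2)=0$ handled separately (then $(D_1\cdot D_2)\geq 0$ and the right-hand side is $\leq 0$, using that $D_2$ is nef so $(D_2^2)\geq 0$ and the product $(D_1^2)(D_2^2)=0$). I expect the main obstacle to be the careful bookkeeping at the boundary $\widetilde{X}\setminus X$: one must ensure that enlarging $X$ to a smooth complete model neither creates spurious intersection points of the generic curves $Z(f_1)$, $Z(f_2)$ outside $X$ nor destroys the transversality guaranteed on $X$, and that the "fixed part" of the two linear systems on $\widetilde{X}$ is genuinely disjoint from the relevant intersection points. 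This is where a Bertini-type theorem (cited in the excerpt) on $\widetilde{X}$, combined with the fact that $\widetilde{X}\setminus X$ is a proper closed subset, does the work; alternatively one can avoid an explicit resolution by working with the normalization of $\overline{X}$ and invoking the Hodge index theorem for normal projective surfaces, but the resolution route keeps everything in the smooth setting already developed in the paper.
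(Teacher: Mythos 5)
Your proposal is correct and follows essentially the same route as the paper's own proof (given in the appendix, Theorem \ref{th-Hodege-appendix}): compactify $X$ to a complete model carrying regular extensions of the Kodaira maps, resolve singularities to a smooth projective surface, identify the intersection indices $[L_i,L_j]$ with data pulled back from the $\p(L_i^*)$, and invoke the Hodge index theorem there. The only real difference is one of language: you phrase the final step in terms of intersection numbers of nef divisor classes $D_1,D_2$ on $\widetilde{X}$, while the paper expresses $[L_i,L_j]$ as $\int_Y \pi_i^*\omega_{H_i}\wedge\pi_j^*\omega_{H_j}$ using the integral formula (Corollary \ref{cor-int-index-integral}) and then applies the Hodge index theorem in its Kaehler-form formulation (Corollary \ref{cor-of-Hodge-index}); these are two standard formulations of the same fact.
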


To make the present paper self-contained we give a proof of Theorem
\ref{7.1} using the usual Hodge theory in the appendix. Also as
mentioned in the introduction, the authors have found an elementary
proof of Theorem \ref{7.1} using only the isoperimetric inequality
for planar convex bodies and the Hilbert theorem on degree of a
subvariety in the projective space. This has appeared in a separate
paper (see \cite[Theorem 5.9]{Askold-Kiumars-arXiv} for a preliminary
version, and \cite[Part IV]{Askold-Kiumars-Newton-Okounkov} for the detailed version).

\begin{Th}[Algebraic analogue of the Alexandrov-Fenchel
inequality] \label{7.2} Let $X$ be an irreducible $n$-dimensional
smooth \qp variety and let $L_1,\dots,L_n\in {\bf K}_{reg}(X)$ be
very ample subspaces. Then the following inequality holds
$$[L_1,L_2, L_3,\dots,L_n]^2\geq [L_1,L_1,
L_3,\dots,L_n][L_2,L_2,L_3,\dots,L_n].$$
\end{Th}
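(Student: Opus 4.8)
The plan is to reduce the $n$-dimensional inequality in Theorem \ref{7.2} to the surface case (Theorem \ref{7.1}) by cutting $X$ with generic hyperplane sections coming from the subspaces $L_3, \ldots, L_n$. This is exactly the kind of dimension reduction that Theorem \ref{3.5} was built for, combined with the Bertini-Lefschetz theorem to keep the cut-down variety smooth and irreducible.

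\smallskip

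First I would apply the version of the Bertini-Lefschetz theorem stated above, together with Theorem \ref{3.5}, to the $(n-2)$-tuple $L_3, \ldots, L_n$ of very ample subspaces. Choosing a generic point ${\bf f}(n-2) = (f_3, \ldots, f_n) \in {\bf L}(n-2)$ in the appropriate Zariski open set, the system $f_3 = \cdots = f_n = 0$ either is inconsistent --- in which case all three intersection indices in the statement vanish and the inequality is trivial --- or it defines a smooth irreducible surface $Y \subset X$. Moreover, by part 2) of Theorem \ref{3.5}, for a generic such choice we have the equalities
$$[L_1, L_2, L_3, \ldots, L_n]_X = [L_1, L_2]_Y, \quad [L_i, L_i, L_3, \ldots, L_n]_X = [L_i, L_i]_Y \quad (i = 1,2).$$
Here I should be slightly careful: Theorem \ref{3.5} as stated peels off the \emph{first} $k$ arguments, so I would first use the symmetry of the intersection index (Theorem \ref{3.1}, part 1) to move $L_3, \ldots, L_n$ into the leading positions, apply Theorem \ref{3.5} three times (once for each of the three $n$-tuples appearing in the desired inequality), and then invoke Proposition \ref{3.4} / the openness in Theorem \ref{3.5} to select a \emph{single} generic tuple $(f_3, \ldots, f_n)$ that simultaneously realizes all three equalities --- the intersection of finitely many non-empty Zariski open sets is non-empty.

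\smallskip

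Second, the restrictions $L_1|_Y$ and $L_2|_Y$ are again very ample subspaces in ${\bf K}_{reg}(Y)$ (this is the remark preceding Theorem \ref{7.1} that restriction of a very ample subspace to a smooth subvariety is very ample), and $Y$ is a smooth irreducible surface. Therefore Theorem \ref{7.1} applies directly on $Y$ and gives
$$[L_1, L_2]_Y^2 \geq [L_1, L_1]_Y \, [L_2, L_2]_Y.$$
Substituting the three equalities from the previous step turns this into the claimed inequality on $X$, completing the proof.

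\smallskip

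The main obstacle I anticipate is not any deep geometry but the bookkeeping in the reduction step: ensuring that one generic choice of $(f_3, \ldots, f_n)$ works for all three intersection numbers at once, and that the surface $Y$ it produces is genuinely smooth and irreducible so that Theorem \ref{7.1} is applicable. This requires combining the Bertini-Lefschetz statement (smoothness and irreducibility of the cut-down variety) with the openness assertions in Propositions \ref{3.2}, \ref{3.4} and Theorem \ref{3.5} (so that the intersection indices are computed correctly on $Y$), and taking a finite intersection of the resulting Zariski open sets. Once the genericity is set up correctly, the inequality itself is an immediate consequence of the surface case.
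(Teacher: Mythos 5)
Your proposal is correct and follows essentially the same route as the paper: cut down by generic $f_3 \in L_3, \ldots, f_n \in L_n$ using the Bertini–Lefschetz theorem and Theorem \ref{3.5} to obtain a smooth irreducible surface $Y$, transfer all three intersection indices to $Y$, and apply the Hodge inequality (Theorem \ref{7.1}). The extra bookkeeping you flag — using symmetry to move $L_3, \ldots, L_n$ to the leading positions, intersecting the finitely many Zariski open sets to get one generic tuple realizing all three equalities, and handling the inconsistent case — is exactly what the paper's terser proof silently relies on.
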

\begin{proof}
Consider $n$-tuples $(L_1, L_2,
L_3,\dots,L_n)$, $(L_1,L_1,L_3,\dots,L_n)$ and
$(L_2,L_2,L_3,\dots,L_n)$ of subspaces in the semigroup
${\bf K}_{reg}(X)$. According to the Bertini-Lefschetz theorem and Theorem \ref{3.5}
there is an $(n-2)$-tuple of  functions $f_3\in L_3,\dots, f_n\in
L_n$ such that the system $f_3=\dots =f_n=0$ is non-degenerate and
defines an irreducible surface  $Y\subset X$ for which we have:
$$[L_1,L_2,L_3, \dots,L_n]=[L_1, L_2]_Y,$$ $$[L_1,
L_1,L_3,\dots,L_n]=[L_1, L_1]_Y,$$
$$[L_2,L_2,L_3,\dots,L_n]=[L_2, L_2]_Y.$$  By Theorem \ref{7.1},
$$[L_1,L_2]^2_Y\geq [L_1,L_1]_Y[L_2,L_2]_Y,$$ which proves the theorem.
\end{proof}

The above theorem, in a slightly different form, was proved at the
beginning of $1980$'s by Teissier and the second author. A survey
and a list of references can be found in \cite{Burago-Zalgaller}.

Similar to Section \ref{sec-mixed-volume-properties}, let us
introduce a notation for the repetition of subspaces in the
intersection index. Let $2\leq m\leq n$ be an integer and
$k_1+\dots+k_r=m$ a partition of $m$ with $k_i \in \n$. Consider
the subspaces $L_1, \ldots, L_n \in {\bf K}_{reg}(X)$. Denote by
$[k_1*L_1,\dots, k_r*L_r,L_{m+1},\dots,L_n]$ the intersection index
of these subspaces where $L_1$ is repeated $k_1$ times, $L_2$ is
repeated $k_2$ times, etc. and $L_{m+1},\dots, L_n$ appear once. The
following inequalities follow from Theorem \ref{7.2} exactly in the
same way as the analogous geometric inequalities follow from the
Alexandrov-Fenchel inequality.
\begin{Cor}[Corollaries of the algebraic analogue of
Alexandrov-Fenchel inequality] \label{cor-Alex-Fenchel-regular}
Let $X$ be an $n$-dimensional smooth irreducible \qp variety.

\noindent 1) Let $2\leq m\leq n$ and $k_1+\dots+k_r=m$ with $k_i \in
\Bbb N$. Take very ample subspaces of regular functions $L_1, \ldots, L_n
\in {\bf K}_{reg}(X)$. Then $$[k_1*L_1, \ldots, k_r*L_r, L_{m+1},
\ldots, L_n]^m \geq \prod_{1 \leq j \leq r}[m*L_j, L_{m+1}, \ldots,
L_n]^{k_j}.$$

\noindent 2)(Generalized Brunn-Minkowski inequality) For any fixed
very ample subspaces $L_{m+1}, \ldots, L_n \in {\bf K}_{reg}(X)$, the
function $$F: L \mapsto [m*L, L_{m+1}, \ldots, L_n]^{1/m},$$ is a
concave function on the semigroup of very ample subspaces.
\end{Cor}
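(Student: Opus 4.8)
The plan is to obtain both inequalities as purely formal consequences of the algebraic Alexandrov--Fenchel inequality (Theorem \ref{7.2}), in precisely the way that Properties (4) and (5) of Section \ref{sec-mixed-volume-properties} are deduced from the classical Alexandrov--Fenchel inequality (see \cite{Burago-Zalgaller} for that deduction). The core of part (1) is a log-concavity statement. Fix very ample subspaces $A$, $B$ and $L_{m+1},\ldots,L_n$ in ${\bf K}_{reg}(X)$, and for $0\le i\le m$ set
$$a_i=[i*A,\,(m-i)*B,\,L_{m+1},\ldots,L_n].$$
Applying Theorem \ref{7.2} with its first two arguments equal to $A$ and $B$ and its remaining $n-2$ arguments equal to $(i-1)*A,\,(m-i-1)*B,\,L_{m+1},\ldots,L_n$ gives $a_i^2\ge a_{i-1}a_{i+1}$ for $1\le i\le m-1$; note that every subspace occurring here is one of the given very ample subspaces taken with some multiplicity, so Theorem \ref{7.2} does apply.

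Next I would record the elementary fact that a finite sequence of non-negative reals with $a_i^2\ge a_{i-1}a_{i+1}$ satisfies $a_i^{\,m}\ge a_0^{\,m-i}a_m^{\,i}$: if $a_0$ or $a_m$ vanishes this is trivial, and otherwise all the $a_i$ are strictly positive and the claim is the concavity of $i\mapsto\log a_i$. With $A=L_1$ and $B=L_2$ this yields the two-body inequality
$$[i*L_1,\,(m-i)*L_2,\,L_{m+1},\ldots,L_n]^{\,m}\ge[m*L_1,\,L_{m+1},\ldots,L_n]^{\,i}\,[m*L_2,\,L_{m+1},\ldots,L_n]^{\,m-i},$$
which is exactly part (1) for two distinct subspaces. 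The general case
$$[k_1*L_1,\ldots,k_r*L_r,L_{m+1},\ldots,L_n]^{\,m}\ge\prod_{1\le j\le r}[m*L_j,\,L_{m+1},\ldots,L_n]^{\,k_j}$$
then follows by iterating this two-body inequality, reducing the number of distinct subspaces one at a time, exactly as in the convex-geometric argument. I expect the only real work to be the bookkeeping of this last iteration --- transporting multiplicities from one subspace to the next and disposing of the degenerate cases in which some intermediate intersection index vanishes (so that $\log$ is unavailable) --- and since this is word-for-word the classical argument I would be brief and refer to \cite{Burago-Zalgaller}.

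Finally, part (2) follows from part (1). Fix very ample $L_{m+1},\ldots,L_n$, write $\mathcal{L}=(L_{m+1},\ldots,L_n)$ and $F(L)=[m*L,\mathcal{L}]^{1/m}$. Since the very ample subspaces form a subsemigroup of ${\bf K}_{reg}(X)$, the product $L_1L_2$ is very ample and $F(L_1L_2)$ is defined. Expanding each of the $m$ occurrences of $L_1L_2$ by multi-linearity of the intersection index (Theorem \ref{6.1}) and using its symmetry (Theorem \ref{3.1}) gives
$$[m*(L_1L_2),\mathcal{L}]=\sum_{i=0}^{m}\binom{m}{i}\,[i*L_1,\,(m-i)*L_2,\,\mathcal{L}].$$
By part (1) with $r=2$ the $i$-th summand is at least $[m*L_1,\mathcal{L}]^{i/m}[m*L_2,\mathcal{L}]^{(m-i)/m}=F(L_1)^{i}F(L_2)^{m-i}$, so
$$F(L_1L_2)^{m}=[m*(L_1L_2),\mathcal{L}]\ge\sum_{i=0}^{m}\binom{m}{i}F(L_1)^{i}F(L_2)^{m-i}=\bigl(F(L_1)+F(L_2)\bigr)^{m},$$
and taking $m$-th roots gives $F(L_1)+F(L_2)\le F(L_1L_2)$, which is the asserted concavity of $F$ on the semigroup of very ample subspaces.
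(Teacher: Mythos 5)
Your route is the one the paper itself invokes (the paper only remarks that the corollary ``follows from Theorem \ref{7.2} exactly in the same way as the analogous geometric inequalities follow from the Alexandrov--Fenchel inequality''), and you carry out the standard argument correctly in outline: applying Theorem \ref{7.2} with arguments $L_1, L_2, (i-1)*L_1, (m-i-1)*L_2, L_{m+1},\ldots,L_n$ --- all very ample, so the theorem applies --- yields $a_i^2 \geq a_{i-1}a_{i+1}$ for $a_i = [i*L_1,(m-i)*L_2,L_{m+1},\ldots,L_n]$; the two-body inequality would follow from this log-concavity; and part (2) follows from part (1) via the binomial expansion of $[m*(L_1L_2), L_{m+1},\ldots,L_n]$ together with $\sum_i \binom{m}{i} F(L_1)^i F(L_2)^{m-i} = (F(L_1)+F(L_2))^m$.

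However, the ``elementary fact'' on which part (1) rests is not correct as stated. A nonnegative sequence with $a_i^2 \geq a_{i-1}a_{i+1}$ and $a_0,a_m>0$ need not have all terms positive: for $m=3$ the sequence $(1,0,0,1)$ satisfies both log-concavity inequalities (each reads $0\geq 0$) while $a_1^3=0 < a_0^2 a_3 = 1$. So the clause ``otherwise all the $a_i$ are strictly positive'' does not follow formally from the log-concavity you derived, and it is precisely here --- not only in the later multi-body iteration --- that the degenerate cases must be disposed of. To close the gap you must invoke the positivity of the intersection index for $n$-tuples of very ample subspaces on an irreducible $n$-dimensional $X$: since each Kodaira map $\Phi_{L_i}$ is an embedding, a generic system of pulled-back hyperplanes cuts out a nonempty finite set (equivalently, the integrand in Corollary \ref{cor-int-index-integral} is a positive volume form on the smooth locus of the model). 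That positivity is true, but it is an extra geometric input, not a formal consequence of Theorem \ref{7.2}; once it is supplied, all the $a_i$ are strictly positive, $\log a_i$ is defined and concave, and the remainder of your argument goes through.
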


\section{Intersection index for subspaces of rational functions}
\label{sec-int-index-rational}
In this section we extend the definitions and results in the
previous section to the following general situation: instead of a
smooth $n$-dimensional \qp variety we take any \qp variety $X$ (possibly singular)
such that all its irreducible components have the same dimension $n$.
And instead of a finite dimensional subspace of regular functions with no base locus,
we take any finite dimensional subspace of rational functions on $X$
such that its restriction to any irreducible component of $X$ is a non-zero subspace.

Let us start with the following lemma dealing with subspaces of
regular functions. Recall that ${\bf K}_{reg}(X)$
denotes the collection of all the finite dimensional subspaces of regular
functions on $X$ with no base locus.
\begin{Lem} \label{lem-Sigma}
Let $X$ be an $n$-dimensional \qp variety (not necessarily smooth)
and let $L_1, \ldots L_n \in {\bf K}_{reg}(X)$. Then for any subvariety
$\Sigma \subset X$ with $\dim(\Sigma) < n$, there is a non-empty  Zariski open subset
${\bf U}_\Sigma \subset {\bf L} = L_1 \times \cdots \times L_n$ such that
for any ${\bf f}=(f_1, \ldots, f_n) \in {\bf U}_\Sigma$ the system
$f_1 = \cdots = f_n = 0$ has no solution in $\Sigma$.
\end{Lem}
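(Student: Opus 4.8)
The plan is to realize the constraint as an incidence correspondence over the parameter space and bound its dimension. We may assume $\Sigma \neq \emptyset$, otherwise the statement is trivial. Fix a basis $\{g_{ij}\}_j$ of each $L_i$, so that a general element of $L_i$ is $f_i = \sum_j c_{ij}g_{ij}$ with $c_{ij}$ the coordinates on $L_i$; then $f_i(x) = \sum_j c_{ij}g_{ij}(x)$ is a regular function of the pair $(x, {\bf f}) \in X \times {\bf L}$, where ${\bf L} = L_1 \times \cdots \times L_n$. Consequently
$$W = \{(x, {\bf f}) \in \Sigma \times {\bf L} \mid f_1(x) = \cdots = f_n(x) = 0\}$$
is a closed algebraic subset of $\Sigma \times {\bf L}$ (the common zero locus of $n$ regular functions).

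Next I would study the first projection $p_1 : W \to \Sigma$. Fix $x \in \Sigma \subset X$. Since each $L_i \in {\bf K}_{reg}(X)$ has no base locus on $X$, there is some $f \in L_i$ with $f(x) \neq 0$, so $\{f_i \in L_i \mid f_i(x) = 0\}$ is a hyperplane in $L_i$, of dimension $\dim L_i - 1$. Hence the fiber $p_1^{-1}(x)$ is the product of these $n$ hyperplanes, of dimension exactly $\dim{\bf L} - n$, independently of $x$. Applying the standard inequality $\dim V \leq \dim\overline{p_1(V)} + \max_{x} \dim p_1^{-1}(x)$ to each irreducible component $V$ of $W$ (whose image lies in $\Sigma$, and whose fibers are closed in the fibers of $p_1$), we get $\dim V \leq \dim \Sigma + (\dim {\bf L} - n) \leq (n-1) + \dim{\bf L} - n = \dim{\bf L} - 1$. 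Therefore $\dim W < \dim {\bf L}$.

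Finally I would pass to the second projection $p_2 : W \to {\bf L}$, $(x, {\bf f}) \mapsto {\bf f}$. Its image $p_2(W)$ is a constructible subset of ${\bf L}$ (Chevalley's theorem) with $\dim p_2(W) \leq \dim W < \dim {\bf L}$. Since ${\bf L}$ is an affine space, hence irreducible, the Zariski closure $\Sigma' := \overline{p_2(W)}$ is a proper closed subset of ${\bf L}$, so ${\bf U}_\Sigma := {\bf L} \setminus \Sigma'$ is a non-empty Zariski open subset. By construction, ${\bf f} \in {\bf U}_\Sigma$ implies ${\bf f} \notin p_2(W)$, i.e. there is no $x \in \Sigma$ with $f_1(x) = \cdots = f_n(x) = 0$; that is, the system $f_1 = \cdots = f_n = 0$ has no solution on $\Sigma$, as required.

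The only step carrying real content is the fiber-dimension count in the second paragraph: it is precisely the no-base-locus hypothesis on the $L_i$ that forces each of the $n$ conditions $f_i(x) = 0$ to drop the dimension by one, and it is the hypothesis $\dim \Sigma \leq n-1$ that then pushes the incidence variety below the dimension of the parameter space. Everything else (constructibility of images, the elementary fiber-dimension inequality for morphisms) is routine. One could instead argue by induction on $\dim \Sigma$, restricting the $L_i$ to $\Sigma$ and removing one generic equation at a time, but that forces one to track how the successive intermediate subvarieties vary with the functions already chosen; the incidence-variety argument avoids this bookkeeping entirely.
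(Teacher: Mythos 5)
Your proof is correct, and it takes a genuinely different (and cleaner) route than the paper's. The paper fixes a basis of each $L_i$, covers $X$ by the open sets $V_{\bf j}$ on which some chosen basis vector $g_{ij_i}$ of each $L_i$ is nonzero, rewrites the system on $V_{\bf j}$ as $\phi_{\bf j}(x)=-{\bf c}$ for a regular map $\phi_{\bf j}\colon V_{\bf j}\to\c^n$, and then argues that $\phi_{\bf j}(\Sigma\cap V_{\bf j})$ has dimension $<n$, so for generic ${\bf c}$ the system has no solutions in $\Sigma\cap V_{\bf j}$; intersecting over ${\bf j}$ gives ${\bf U}_\Sigma$. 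The slight awkwardness in that presentation is that $\phi_{\bf j}$ itself depends on the coordinates $\overline{f_i}$, so ``the image has dimension $<n$'' really needs to be run fiberwise over the $\overline{\bf f}$-parameters before one can extract a single Zariski-open subset of ${\bf L}$ --- when made fully precise this again amounts to bounding an incidence variety. Your proof sets up that incidence variety $W\subset\Sigma\times{\bf L}$ directly and globally: the no-base-locus hypothesis gives $\dim p_1^{-1}(x)=\dim{\bf L}-n$ for every $x\in\Sigma$, the fiber-dimension inequality then gives $\dim W\le\dim\Sigma+\dim{\bf L}-n<\dim{\bf L}$, and Chevalley plus irreducibility of ${\bf L}$ finishes. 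You lose the explicit ``perturb the constant term'' picture that the paper's local-trivialization proof conveys (which is reused in the proof of Proposition~\ref{3.2} and is useful intuition for Sard-type arguments elsewhere in the section), but you gain a shorter argument with no parameter-dependence bookkeeping and no need to verify non-emptiness of the $V_{\bf j}$ separately.
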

\begin{proof}
As in Proposition \ref{3.2} let us fix a basis $g_{ij}$ for each
subspace $L_i$. Consider all the $n$-tuples ${\bf g}_{\bf j} =
(g_{1j_1}, \ldots, g_{nj_n})$ where ${\bf j} = (j_1, \ldots,
j_n)$, which contain one function from each basis $\{g_{ij}\}$. Let
$V_{\bf j}$ be the Zariski open subset in $X$ defined by the
inequalities $g_{ij_i} \neq 0$ , $i=1, \ldots, n$. Since $L_1,
\ldots, L_n \in {\bf K}_{reg}(X)$, the union of the sets $V_{\bf j}$
is $X$. Represent each function $f_i$ in the form $f_i =
\overline{f_i} + c_i g_{ij_i}$, where $\overline{f_i}$ is the
linear combination of all the basis functions $g_{ik}$ except
$g_{i, j_i}$. When $V_{\bf j}$ is non-empty the system can be written
as $$ \frac{\overline{f_1}}{g_{1,j_1}} = -c_1, \cdots ,
\frac{\overline{f_n}}{g_{n,j_n}}=-c_n.$$ In other words in the open set
$V_{\bf j}$ each solution of the system $f_1 = \cdots = f_n =
0$ is a preimage of the point $-{\bf c} = (-c_1, \ldots, -c_n)$ in
$\c^n$ under the map $\phi_{\bf j}: V_{\bf j} \to \c^n$ given by
$$\phi_{\bf j} = (\frac{\overline{f_1}}{g_{1,j_1}}, \cdots ,
\frac{\overline{f_n}}{g_{n,j_n}}).$$ Let $\Sigma_{\bf j} = \Sigma
\cap V_{\bf j}$. The set $\phi(\Sigma_{\bf j})$, has dimension
smaller than $n$ and hence there is a Zariski open subset
${\bf U}_{\bf j} \in {\bf L}$ such that for ${\bf f}=(f_1, \ldots, f_n)
\in {\bf U}_{\bf j}$ the system $f_1 = \cdots = f_n = 0$ has no
solutions in $\Sigma_{\bf j}$. Now take ${\bf U}_\Sigma$ to be the
intersection of all the ${\bf U}_{\bf j}$.
\end{proof}

\begin{Def} \label{def-K-rat} Let $X$ be a \qp variety such that all its
irreducible components have the same dimension $n$. Denote by $\K$ the
collection of all the finite dimensional subspaces $L$ of rational
functions on $X$ such that the restriction of $L$ to any irreducible component of $X$ is a
non-zero subspace.
\end{Def}

A rational function is not necessarily defined every where on $X$.
Nevertheless we can still
define a multiplication on $\K$ exactly as we did for
${\bf K}_{reg}(X)$, and with this multiplication $\K$ becomes a (commutative) semigroup.
We now define a birationally invariant intersection index on the semigroup
$\K$.
\begin{Def}
Let $L \in \K$. We say that a closed subvariety
$\Sigma \subset X$ is said to be {\it admissible} for $(X, L)$ if it satisfies
the following properties:
\begin{enumerate}
\item[(i)] $\Sigma$ contains the singular locus of $X$ and hence $X
\setminus \Sigma$ is smooth.
\item[(ii)] $\dim(\Sigma) < n$.
\item[(iii)] $\Sigma$ contains the supports of all the irreducible divisors on
which a function from $L$ has a pole.
\item[(iv)] $\Sigma$ contains the base locus of $L$, i.e.
the set of points where all the functions in $L$ vanish.
\end{enumerate}

A closed subvariety $\Sigma \subset X$ is said to be {\it admissible} for a
finite collection $L_1, \ldots, L_k \in \K$ if it is
admissible for each $L_i$, $i=1, \ldots, k$.
\end{Def}


\begin{Rem} \label{rem-Kodaira-map-rational}
Take a subspace $L \in \K$ and an admissible subvariety $\Sigma$ for $L$.
The Kodaira map $\Phi_L$ is defined and regular on
$X \setminus \Sigma$. Thus for $L \in \K$,
we can talk about the Kodaira map $\Phi_L$ as a rational map from $X$ to $\p(L^*)$.
\end{Rem}

Let $X$ be a \qp variety such that all its irreducible components have dimension  $n$. Consider $L_1, \ldots, L_n \in
\K$. Take any subvariety $\Sigma$ which is admissible
for $X$ and $L_1, \ldots, L_n$.

\begin{Def}
Let $L_1, \ldots, L_n \in \K$.
The {\it birationally invariant intersection index} (or for short the {\it intersection index})
of the subspaces $L_1, \ldots, L_n$ is the intersection index $[L_1, \ldots,
L_n]_{X \setminus \Sigma}$ of the restrictions of the subspaces
$L_i$ to the smooth variety $X \setminus \Sigma$. We denote it by
the symbol $[L_1, \ldots, L_n]_X$, or simply by $[L_1, \ldots,
L_n]$ when there is no confusion about the variety $X$.
\end{Def}

\begin{Prop}
The birationally invariant index is well-defined.
\end{Prop}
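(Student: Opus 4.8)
The plan is to show that the intersection index $[L_1,\ldots,L_n]_{X\setminus\Sigma}$ does not depend on the choice of an admissible subvariety $\Sigma$. First I would observe that if $\Sigma_1$ and $\Sigma_2$ are both admissible for $X$ and $L_1,\ldots,L_n$, then so is their union $\Sigma = \Sigma_1 \cup \Sigma_2$: properties (i)--(iv) are all preserved under taking unions of closed subvarieties (a union still contains the singular locus, still has dimension $< n$, still contains all the relevant pole supports, and still contains the base loci). Hence it suffices to compare $\Sigma_1$ with $\Sigma = \Sigma_1 \cup \Sigma_2$, i.e. to treat the case $\Sigma_1 \subset \Sigma$, and then the general statement follows by the chain $[L_1,\ldots,L_n]_{X\setminus\Sigma_1} = [L_1,\ldots,L_n]_{X\setminus\Sigma} = [L_1,\ldots,L_n]_{X\setminus\Sigma_2}$.

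So assume $\Sigma_1 \subseteq \Sigma$ are both admissible, and set $Z = \Sigma \setminus \Sigma_1$, a locally closed subvariety of $X\setminus\Sigma_1$ whose closure $\overline{Z}$ (inside the smooth variety $X\setminus\Sigma_1$) has dimension $< n$. The restrictions of $L_1,\ldots,L_n$ to $X\setminus\Sigma_1$ lie in ${\bf K}_{reg}(X\setminus\Sigma_1)$, because on $X\setminus\Sigma_1$ the functions in each $L_i$ are regular (no poles, by (iii)) and have no common zero (no base locus, by (iv)). Now I would apply Lemma~\ref{lem-Sigma} with the ambient smooth variety $X\setminus\Sigma_1$ and the subvariety $\overline{Z}$: there is a non-empty Zariski open ${\bf U}_{\overline Z} \subset {\bf L} = L_1\times\cdots\times L_n$ such that for ${\bf f}\in {\bf U}_{\overline Z}$ the system $f_1=\cdots=f_n=0$ has no solution in $\overline{Z}$, in particular none in $Z$. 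Intersecting ${\bf U}_{\overline Z}$ with the Zariski open set from Proposition~\ref{3.4} (applied to $X\setminus\Sigma_1$), for a generic ${\bf f}$ the system has exactly $[L_1,\ldots,L_n]_{X\setminus\Sigma_1}$ solutions, all of them lying in $(X\setminus\Sigma_1)\setminus Z = X\setminus\Sigma$. The same generic ${\bf f}$ (after further intersecting with the generic set from Proposition~\ref{3.4} for $X\setminus\Sigma$) gives $[L_1,\ldots,L_n]_{X\setminus\Sigma}$ solutions on $X\setminus\Sigma$; since the solution sets coincide, the two indices are equal.

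The one point requiring a little care, which I expect to be the main obstacle, is the passage from the locally closed $Z$ to a closed subvariety to which Lemma~\ref{lem-Sigma} literally applies: Lemma~\ref{lem-Sigma} is stated for a closed subvariety $\Sigma'$ of the ambient variety, so I need to take the Zariski closure $\overline{Z}$ of $Z$ inside $X\setminus\Sigma_1$ and check that $\dim \overline{Z} = \dim Z < n$, which holds since taking closure does not raise dimension and $Z \subseteq \Sigma$ has dimension $< n$. A second small subtlety is that ${\bf K}_{reg}$ and Proposition~\ref{3.4} are stated for smooth varieties all of whose components have dimension $n$; one checks $X\setminus\Sigma$ and $X\setminus\Sigma_1$ are smooth of pure dimension $n$ because $\Sigma, \Sigma_1$ contain the singular locus and have dimension $< n$, so removing them leaves a dense open subset of each $n$-dimensional component. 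With these verifications in place, the genericity sets can all be intersected (a finite intersection of non-empty Zariski opens in the irreducible-components sense is non-empty), and the equality of indices follows.
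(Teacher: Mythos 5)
Your proof is correct and follows essentially the same route as the paper: form the union $\Sigma = \Sigma_1 \cup \Sigma_2$, observe it is admissible, and invoke Lemma~\ref{lem-Sigma} to conclude $[L_1,\ldots,L_n]_{X\setminus\Sigma_i} = [L_1,\ldots,L_n]_{X\setminus\Sigma}$ for $i=1,2$. The paper's own proof is a two-line citation of Lemma~\ref{lem-Sigma}; you have simply unpacked that citation carefully (passing to the closure $\overline{Z}$ of $\Sigma\setminus\Sigma_1$ inside $X\setminus\Sigma_1$, intersecting the various generic open sets, and checking the pure-dimension and smoothness hypotheses), which is exactly the reasoning the paper's terse proof is implicitly relying on.
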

\begin{proof}
The index $[L_1, \ldots, L_n]_{X \setminus \Sigma}$ is defined
because $L_i$ are finite dimensional subspaces of regular function
without base locus on $X \setminus \Sigma$. Let us show that the
index $[L_1, \ldots, L_n]_{X \setminus \Sigma}$ is independent of
the choice of an admissible set $\Sigma$. Let $\Sigma_1, \Sigma_2$
be two admissible subvarieties and put $\Sigma = \Sigma_1 \cup
\Sigma_2$. Then $\Sigma$ is also admissible. Now by Lemma
\ref{lem-Sigma} we have:
$$[L_1, \ldots, L_n]_{X \setminus \Sigma_1} = [L_1, \ldots, L_n]_{X \setminus \Sigma},$$ and
$$[L_1, \ldots, L_n]_{X \setminus \Sigma_2} = [L_1, \ldots, L_n]_{X \setminus \Sigma},$$
which proves the proposition.
\end{proof}


Let $\tau: X \to Y$ be a birational isomorphism between the $n$-dimensional varieties $X$ and $Y$.
\begin{Prop}
For each $n$-tuple $L_1, \ldots, L_n \in {\bf K}_{rat}(Y)$ we have
$$[L_1, \ldots, L_n]_Y = [\tau^*L_1, \ldots, \tau^*L_n]_X.$$
\end{Prop}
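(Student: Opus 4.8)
\emph{Plan.} The strategy is to use that the birationally invariant index may be computed on the complement of \emph{any} admissible subvariety (the previous proposition), together with the fact that $\tau$ restricts to an isomorphism between Zariski open dense subsets of $X$ and $Y$. By choosing admissible subvarieties on $X$ and on $Y$ that correspond to each other under $\tau$, both sides of the claimed identity become the intersection index of the \emph{same} collection of subspaces, computed on two isomorphic smooth varieties.

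First I would record that the intersection index on a smooth $n$-dimensional variety (Section \ref{subsec-3}) is invariant under isomorphisms: if $\phi\colon X'\xrightarrow{\ \sim\ }Y'$ is an isomorphism and $M_1,\dots,M_n\in{\bf K}_{reg}(Y')$, then $\phi^*$ gives a linear bijection $M_1\times\dots\times M_n\to\phi^*M_1\times\dots\times\phi^*M_n$ carrying a system $g_1=\dots=g_n=0$ to $\phi^*g_1=\dots=\phi^*g_n=0$, while $\phi$ carries the solution set of the former bijectively onto that of the latter; hence the maxima defining $[M_1,\dots,M_n]_{Y'}$ and $[\phi^*M_1,\dots,\phi^*M_n]_{X'}$ coincide. (Throughout, for a dominant rational map $\psi$ one writes $\psi^*L$ for the image of $L$ under the induced isomorphism of fields of rational functions.) I would also note that a birational isomorphism identifies the irreducible components of $X$ with those of $Y$, so that $\tau^*L_i\in{\bf K}_{rat}(X)$ whenever $L_i\in{\bf K}_{rat}(Y)$, and that one may assume $X$ and $Y$ irreducible without loss of generality (otherwise one argues component by component).

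Next, choose Zariski open dense subsets $U\subseteq X$ and $V\subseteq Y$ such that $\tau$ restricts to an isomorphism $U\xrightarrow{\ \sim\ }V$. Starting from any subvariety admissible for $(Y;L_1,\dots,L_n)$, enlarge it by adjoining the closed set $Y\setminus V$ (of dimension $<n$, as $V$ is dense); this preserves properties (i)--(iv) and keeps the dimension below $n$, so it produces an admissible subvariety $\Sigma_Y\subset Y$ with $\Sigma_Y\supseteq Y\setminus V$, and by the previous proposition $[L_1,\dots,L_n]_Y=[L_1,\dots,L_n]_{Y\setminus\Sigma_Y}$. Since $Y\setminus\Sigma_Y\subseteq V$, the set $W=(\tau|_U)^{-1}(Y\setminus\Sigma_Y)$ is open in $X$ and $\tau$ restricts to an isomorphism $W\xrightarrow{\ \sim\ }Y\setminus\Sigma_Y$. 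Put $\Sigma_X=X\setminus W$; it is closed, and $\Sigma_X=(X\setminus U)\cup(\tau|_U)^{-1}(V\cap\Sigma_Y)$ is a union of closed subvarieties of dimension $<n$, so $\dim\Sigma_X<n$.

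The step requiring the most care — the main obstacle — is to check that $\Sigma_X$ is admissible for $(X;\tau^*L_1,\dots,\tau^*L_n)$. The open set $X\setminus\Sigma_X=W$ is isomorphic via $\tau$ to the smooth variety $Y\setminus\Sigma_Y$; an open subset of $X$ isomorphic to a smooth variety consists of smooth points of $X$, giving (i), while (ii) was noted above. For (iii) and (iv): on the smooth (hence normal) variety $Y\setminus\Sigma_Y$ the admissibility of $\Sigma_Y$ forces every function of $L_i$ to be regular (a rational function on a normal variety with no pole along any divisor is regular) and the functions of $L_i$ to have no common zero, i.e. $L_i|_{Y\setminus\Sigma_Y}\in{\bf K}_{reg}(Y\setminus\Sigma_Y)$; transporting by $\tau$, we get $(\tau^*L_i)|_{X\setminus\Sigma_X}\in{\bf K}_{reg}(X\setminus\Sigma_X)$. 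Consequently, any irreducible divisor on $X$ along which some function of $\tau^*L_i$ has a pole, and the base locus of $\tau^*L_i$, must lie inside $\Sigma_X$ (otherwise its intersection with $X\setminus\Sigma_X$ would violate regularity or base-point freeness there). Thus $\Sigma_X$ is admissible, so $[\tau^*L_1,\dots,\tau^*L_n]_X=[\tau^*L_1,\dots,\tau^*L_n]_{X\setminus\Sigma_X}$. Finally, applying the isomorphism-invariance of the second paragraph to $\tau\colon X\setminus\Sigma_X\xrightarrow{\ \sim\ }Y\setminus\Sigma_Y$ (under which $(\tau^*L_i)|_{X\setminus\Sigma_X}$ corresponds to $L_i|_{Y\setminus\Sigma_Y}$) yields $[\tau^*L_1,\dots,\tau^*L_n]_{X\setminus\Sigma_X}=[L_1,\dots,L_n]_{Y\setminus\Sigma_Y}=[L_1,\dots,L_n]_Y$, which is the desired identity.
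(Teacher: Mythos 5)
Your proof is correct and follows essentially the same strategy as the paper: restrict $\tau$ to an isomorphism between dense open subsets, enlarge these to complements of admissible subvarieties that correspond under $\tau$, and then combine well-definedness of the birationally invariant index with isomorphism-invariance of the index on smooth varieties. You simply spell out (more carefully than the paper does) why the transported subvariety $\Sigma_X$ is admissible and why the pulled-back subspaces are regular on $X\setminus\Sigma_X$, where the paper compresses this into ``by enlarging $\Sigma$ and $\Gamma$ we can assume that they are admissible.''
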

\begin{proof}
Since $\tau$ is a birational isomorphism one can find subvarieties
$\Sigma \subset X$ and $\Gamma \subset Y$ such that $\tau$ is an
isomorphism from $X \setminus \Sigma$ to $Y \setminus \Gamma$. By
enlarging $\Sigma$ and $\Gamma$ we can assume that they are
admissible for $\tau^*L_1, \ldots, \tau^*L_n$ on $X$ and $L_1,
\ldots, L_n$ on $Y$ respectively. It is clear that $[L_1, \ldots,
L_n]_{Y \setminus \Gamma} = [\tau^*L_1, \ldots, \tau^*L_n]_{X
\setminus \Sigma}$. Since $$[L_1, \ldots, L_n]_{Y}=[L_1, \ldots,
L_n]_{Y \setminus \Gamma},$$ and $$[\tau^*L_1, \ldots,
\tau^*L_n]_{X} = [\tau^*L_1, \ldots, \tau^*L_n]_{X \setminus
\Sigma},$$ we are done.
\end{proof}

\begin{Prop} \label{prop-1-dim-invariant-index}
Let $L_1 \in \K$ be a one dimensional subspace of
rational functions. Then for any $(n-1)$-tuple of subspaces $L_2,
\ldots, L_n \in \K$ we have
$$[L_1, \ldots, L_n] = 0.$$
\end{Prop}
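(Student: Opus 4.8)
The plan is to reduce to the case of a one-dimensional subspace on a curve, where the statement becomes transparent. A one-dimensional subspace $L_1 = \langle f_1 \rangle \in \K$ consists of the scalar multiples of a single rational function $f_1$. A generic function in $L_1$ is simply $c f_1$ for a generic scalar $c \in \c^*$, and the equation $c f_1 = 0$ has the same solution set as $f_1 = 0$, namely the zero divisor of $f_1$. The point is that this solution set is \emph{fixed} (it does not move with the parameter), so after we cut down by $L_2, \ldots, L_n$ we will see that generic functions from those spaces avoid it entirely.

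First I would pass to an admissible subvariety $\Sigma$ for $X$ and $L_1, \ldots, L_n$, so that on $X \setminus \Sigma$ all functions in all the $L_i$ are regular, each $L_i$ is base-point-free, and $X \setminus \Sigma$ is smooth; by definition $[L_1, \ldots, L_n]_X = [L_1, \ldots, L_n]_{X \setminus \Sigma}$. Next, let $Z \subset X \setminus \Sigma$ be the zero set of $f_1$ on $X \setminus \Sigma$; since $f_1$ is not identically zero on any component (as $L_1 \in \K$), $Z$ has dimension $< n$. Now I would cut down: applying Theorem \ref{3.5} with $k = 1$ and the space $L_1$ is not quite the right move since $L_1$ is the distinguished factor; instead I would cut with $L_2, \ldots, L_n$ and track where the roots live. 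Concretely, by Proposition \ref{3.2} (or Theorem \ref{3.5}) there is a nonempty Zariski open ${\bf U} \subset L_2 \times \cdots \times L_n$ such that for $(f_2, \ldots, f_n) \in {\bf U}$ the system $f_2 = \cdots = f_n = 0$ either is inconsistent on $X \setminus \Sigma$ — in which case every intersection index with these last $n-1$ spaces vanishes and we are done — or defines a smooth curve $Y \subset X \setminus \Sigma$. The key additional input I need is that we may further shrink ${\bf U}$ so that $Y$ avoids the fixed lower-dimensional set $Z$: this is exactly Lemma \ref{lem-Sigma} applied with $\Sigma$ replaced by $Z$ (here using that we have $n-1$ spaces $L_2, \ldots, L_n$ and $\dim Z < n$, so in fact $\dim Z \le n-1$; to be safe one first intersects $Z$ with a further generic hypersurface from, say, $L_2$, reducing to $\dim < n-1$, then applies the lemma to the remaining $n-2$ spaces — or one observes directly that a generic complete-intersection curve meets a fixed subvariety of codimension $\ge 1$ in only finitely many points and then that $f_1$ need not vanish there). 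Having arranged $Y \cap Z = \emptyset$, the restriction of $f_1$ to $Y$ is a nowhere-vanishing regular function on $Y$, so $f_1$ has no roots on $Y$; hence $[L_1, L_2, \ldots, L_n]_{X \setminus \Sigma} = [L_1]_Y = 0$, since a generic $c f_1 \in L_1$ has no zeros on $Y$.

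The main obstacle is the dimension count in the step where one forces the cut-down curve $Y$ to miss the zero locus $Z$ of $f_1$: Lemma \ref{lem-Sigma} is stated for cutting an $n$-dimensional $X$ by $n$ spaces against a subvariety of dimension $< n$, but here after using $n-1$ of the spaces we only have codimension one to spare against $Z$, which itself may have codimension exactly one in $X \setminus \Sigma$. This is precisely the borderline case. I expect the cleanest fix is to note that $Z$ has dimension $\le n-1$, choose a generic $f_2 \in L_2$ (base-point-free, hence its zero divisor meets $Z$ in dimension $\le n-2$), restrict attention to $X' = \{f_2 = 0\} \setminus \Sigma'$ with $Z' = Z \cap X'$ of dimension $\le n-2$, and then apply Lemma \ref{lem-Sigma} on the $(n-1)$-dimensional $X'$ with the $n-1$ spaces $L_1, L_3, \ldots, L_n$ against $Z'$ — or, even more simply, invoke the curve-case analysis directly: by Theorem \ref{3.5} a generic curve $Y$ obtained from $L_2, \ldots, L_n$ gives $[L_1, \ldots, L_n] = [L_1]_Y = \deg_Y(L_1|_Y)$, and $\deg_Y$ of a one-dimensional space spanned by a function that is regular and not identically zero on $Y$ is zero unless $f_1|_Y$ has poles on the compactification $\overline{Y}$ — but poles are confined to the admissible set $\Sigma$, which $Y$ avoids, so $\ord_a(f_1) = 0$ for all $a \in \overline{Y} \setminus Y$, giving $\deg_Y(L_1|_Y) = 0$ by Proposition \ref{5.1} and the definition of degree. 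Either route closes the argument; the second is shorter and avoids any delicate transversality-with-$Z$ bookkeeping.
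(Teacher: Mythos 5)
Your proposal reaches the right conclusion, but by a much longer route than the paper's, and one of the intermediate justifications is incorrect as stated. The crucial point you missed is already built into the definition of an admissible subvariety: condition~(iv) requires $\Sigma$ to contain the \emph{base locus} of $L_1$, and for a one-dimensional space $L_1 = \langle f_1 \rangle$ the base locus is exactly the zero set $\{f_1 = 0\}$. So once $\Sigma$ is admissible for $L_1,\dots,L_n$, the set you call $Z = \{f_1=0\}\cap(X\setminus\Sigma)$ is already \emph{empty}. The paper's proof is then a one-liner: no function in $L_1$ vanishes anywhere on $X\setminus\Sigma$, so for any $c\neq 0$ and any $f_2,\dots,f_n$ the system $cf_1 = f_2 = \cdots = f_n = 0$ has no solutions there, hence $[L_1,\dots,L_n]_{X\setminus\Sigma}=0$. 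All of your cut-down-to-a-curve and transversality-with-$Z$ discussion is unnecessary once this is observed.

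Two further remarks on the details of your route. First, you correctly sensed a real obstruction in your first route: Lemma~\ref{lem-Sigma} is stated for cutting an $n$-dimensional $X$ by $n$ spaces against a subvariety of dimension $<n$, and does not directly force a curve cut out by only $n-1$ spaces to miss a hypersurface $Z$; your sketched fix (or better, the emptiness of $Z$) is genuinely needed. Second, in your shorter route the assertion that $\ord_a(f_1) = 0$ for all $a \in \overline{Y}\setminus Y$ ``because poles are confined to $\Sigma$'' is a non sequitur: the compactification points $a$ lie outside $Y$ and need not lie in $X\setminus\Sigma$ at all, and individually $\ord_a(f_1)$ can be nonzero (take $Y=\c^*\subset\p^1=\overline{Y}$ and $f_1=z$). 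What is true, and what actually closes the degree argument, is that the \emph{sum} vanishes: on the complete curve $\overline{Y}$ the principal divisor $(f_1)$ has total degree $0$, and $f_1$ is regular and nonvanishing on $Y$, so $\deg_Y(L_1|_Y) = \sum_{a\in A} -\ord_a(f_1) = 0$. With that correction your second route is sound, but again the whole detour is avoidable by noting that admissibility already forces $Z=\emptyset$.
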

\begin{proof}
Suppose $L_1$ is spanned by a rational function $f$. An admissible
subvariety $\Sigma$ for $L_1, \ldots,L_n$ contains the hypersurface
$\{f=0\}$. But then no function in $L_1$ vanishes on $X \setminus
\Sigma$ and hence
$$[L_1, \ldots, L_n] = [L_1, \ldots, L_n]_{X \setminus \Sigma} = 0.$$
\end{proof}

\begin{Th} \label{th-obvious-invariant-index}
1) $[L_1,\dots,L_n]$ is a symmetric function of the n-tuples
$L_1,\dots,L_n \in \K$, (i.e. takes the same value
under a permutation of the elements $L_1,\dots,L_n$). 2) The birationally
invariant index is monotone, (i.e. if $L'_1\subseteq
L_1,\dots, L'_n\subseteq L_n$, then $[L_1,\dots,L_n] \geq
[L'_1,\dots,L'_n]$. 3) The birationally invariant index is
non-negative (i.e. $[L_1,\dots,L_n] \geq 0$).
\end{Th}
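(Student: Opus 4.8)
The plan is to reduce all three assertions to Theorem \ref{3.1}, the corresponding list of obvious properties for the intersection index of subspaces of regular functions with no base locus on a smooth variety. The point is that the birationally invariant index was \emph{defined} as such an index after deleting an admissible subvariety, and it was just shown to be independent of the choice of that admissible subvariety.

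First I would fix, once and for all, a single closed subvariety $\Sigma \subset X$ that serves every subspace occurring in the statement. For symmetry and non-negativity one only needs a $\Sigma$ admissible for $L_1, \ldots, L_n$; for monotonicity one needs a $\Sigma$ admissible simultaneously for $L_1, \ldots, L_n$ and for $L_1', \ldots, L_n'$ (here, as in Theorem \ref{3.1}(2), one assumes $L_i' \in \K$). Such a $\Sigma$ exists: choose an admissible subvariety for each of the finitely many subspaces involved, and take the union of all of them together with the singular locus of $X$. This is a closed subvariety, it contains the singular locus, and it still has dimension $<n$, since a finite union of subvarieties of dimension $<n$ has dimension $<n$. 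The relevant bad loci are indeed small: for any $L \in \K$ the restriction of $L$ to each irreducible component of $X$ is non-zero, so choosing one function not identically zero on each component shows that the base locus of $L$, being contained in the zero set of that function, has dimension $<n$; likewise the pole divisor of $L$ has dimension $<n$. By the proposition above (well-definedness of the index), for this common $\Sigma$ one has $[L_1,\ldots,L_n]_X = [L_1,\ldots,L_n]_{X\setminus\Sigma}$ and, in the monotonicity case, also $[L_1',\ldots,L_n']_X = [L_1',\ldots,L_n']_{X\setminus\Sigma}$.

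Now $X\setminus\Sigma$ is a smooth $n$-dimensional variety, and the restrictions of $L_1,\ldots,L_n$ (and of $L_1',\ldots,L_n'$) to $X\setminus\Sigma$ are finite dimensional subspaces of regular functions with no base locus there, i.e.\ elements of ${\bf K}_{reg}(X\setminus\Sigma)$ — this is exactly what admissibility conditions (iii) and (iv) guarantee, since they remove the poles and the base locus. The inclusions $L_i'\subseteq L_i$ are preserved upon restriction. Hence all three statements follow verbatim from Theorem \ref{3.1}: part (1) from the symmetry of $[\,\cdot\,]_{X\setminus\Sigma}$ under permutations, part (2) from its monotonicity under inclusions, and part (3) from its non-negativity.

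The only step requiring any attention — and what I would call the ``main obstacle,'' though it is a mild one — is the bookkeeping in the monotonicity part: one must arrange a \emph{single} admissible $\Sigma$ for both $n$-tuples at once, so that the two indices being compared are literally computed on the same open set $X\setminus\Sigma$, and one must check that passing to the smaller subspace $L_i'$ does not enlarge the relevant bad locus past dimension $<n$. Both are taken care of by the observation above that base loci and pole divisors of members of $\K$ have dimension $<n$, so no genuine difficulty arises; everything else is a direct appeal to Theorem \ref{3.1}.
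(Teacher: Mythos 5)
Your proposal is correct and takes the same route the paper does: the paper's proof is literally the one-liner ``Follows from Theorem \ref{3.1}'', and your argument is just a careful spelling out of why that reduction works (existence of a common admissible $\Sigma$, and the observation that restrictions land in ${\bf K}_{reg}(X\setminus\Sigma)$ with inclusions preserved).
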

\begin{proof} Follows from Theorem \ref{3.1}
\end{proof}

\begin{Th}[Multi-linearity] \label{th-multi-lin-invariant-index}
Let $L_1', L_1'', L_2, \ldots, L_n \in \K$
and put $L_1= L_1'L_1''$. Then
$$[L_1,\dots,L_n]=[L'_1,\dots,L_n]+[L''_1,\dots,L_n].$$
\end{Th}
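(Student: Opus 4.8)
The plan is to reduce the multi-linearity statement for subspaces of rational functions to the already-established multi-linearity for subspaces of regular functions on smooth varieties, namely Theorem~\ref{6.1}. The key point is that the birationally invariant intersection index $[L_1,\dots,L_n]_X$ was \emph{defined} as the intersection index $[L_1,\dots,L_n]_{X\setminus\Sigma}$ on a smooth open subvariety $X\setminus\Sigma$, where $\Sigma$ is admissible, and on a smooth variety the subspaces in question restrict to finite-dimensional subspaces of regular functions with no base locus, i.e.\ elements of ${\bf K}_{reg}(X\setminus\Sigma)$. So the strategy is: choose one $\Sigma$ that works simultaneously for all the spaces involved, pass to $X\setminus\Sigma$, and quote Theorem~\ref{6.1} there.

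First I would choose a closed subvariety $\Sigma\subset X$ that is admissible for the finite collection $L_1', L_1'', L_1 = L_1'L_1'', L_2,\dots,L_n$ all at once; such a $\Sigma$ exists since the union of finitely many admissible subvarieties is again admissible (it still has dimension $<n$, still contains the singular locus, and still contains all the relevant pole-supports and base loci). On $U = X\setminus\Sigma$, which is smooth, each of the listed spaces restricts to an element of ${\bf K}_{reg}(U)$: it has no poles on $U$ by condition (iii) of admissibility and no base locus by condition (iv). Next I would observe that the restriction operation respects products of subspaces, so the restriction of $L_1 = L_1'L_1''$ to $U$ equals the product of the restrictions of $L_1'$ and $L_1''$ to $U$; this is immediate from the definition of the product as the span of pointwise products of functions, which commutes with restriction. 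Then, by definition of the birationally invariant index,
$$[L_1,\dots,L_n]_X = [L_1,\dots,L_n]_U,\quad [L_1',\dots,L_n]_X = [L_1',\dots,L_n]_U,\quad [L_1'',\dots,L_n]_X = [L_1'',\dots,L_n]_U,$$
and applying Theorem~\ref{6.1} on the smooth variety $U$ with the spaces $L_1'|_U, L_1''|_U, L_2|_U,\dots,L_n|_U$ gives $[L_1,\dots,L_n]_U = [L_1',\dots,L_n]_U + [L_1'',\dots,L_n]_U$, which is exactly the asserted identity.

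I do not expect a genuine obstacle here; the content is entirely bookkeeping about admissible subvarieties. The one point that needs a moment's care is that a single $\Sigma$ can be chosen to be admissible for the \emph{product} $L_1'L_1''$ as well as for the factors --- one should check that functions in $L_1'L_1''$ have poles only along divisors already accounted for by the pole-supports of $L_1'$ and $L_1''$, and that the base locus of $L_1'L_1''$ is contained in the union of the base loci of $L_1'$ and $L_1''$; both are straightforward (a pole of $fg$ lies in the union of the pole-loci of $f$ and $g$, and if some $f\in L_1'$ and some $g\in L_1''$ are both nonvanishing at a point then $fg\in L_1'L_1''$ is nonvanishing there). Once that is noted, the rest is just invoking the already-proved regular-function case and the well-definedness of the index.
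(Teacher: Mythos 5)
Your proof is correct and follows exactly the route the paper intends: the paper's proof of this theorem is just the one-line remark ``Follows from Theorem \ref{6.1},'' and your argument supplies precisely the implicit bookkeeping — choosing a common admissible $\Sigma$ for $L_1', L_1'', L_1'L_1'', L_2,\dots,L_n$, noting that restriction to $X\setminus\Sigma$ commutes with products of subspaces, and invoking the regular-function multi-linearity on the smooth open set.
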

\begin{proof} Follows from Theorem \ref{6.1}
\end{proof}

\begin{Cor} \label{cor-int-index-lin-equ}
Let $L_1, \ldots, L_n \in \K$. Take $1$-dimensional subspaces
$L'_1, \ldots, L'_n \in \K$. Then
$$[L_1, \ldots, L_n] = [L'_1L_1, \ldots, L'_nL_n].$$
\end{Cor}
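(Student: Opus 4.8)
The plan is to reduce everything to multi-linearity (Theorem \ref{th-multi-lin-invariant-index}) together with Proposition \ref{prop-1-dim-invariant-index}, which says that any $n$-tuple containing a $1$-dimensional subspace has intersection index $0$. The key observation is that for a $1$-dimensional $L_i'$ and any $L_i \in \K$, we can expand $[\,\ldots, L_i'L_i, \ldots\,]$ using multi-linearity in the $i$-th slot: writing $L_i'L_i$ as a product, Theorem \ref{th-multi-lin-invariant-index} gives
$$[\ldots, L_i'L_i, \ldots] = [\ldots, L_i', \ldots] + [\ldots, L_i, \ldots].$$
But $L_i'$ is $1$-dimensional, so by Proposition \ref{prop-1-dim-invariant-index} the first term on the right vanishes, leaving $[\ldots, L_i'L_i, \ldots] = [\ldots, L_i, \ldots]$.

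First I would apply this observation in the first slot: $[L_1'L_1, L_2, \ldots, L_n] = [L_1, L_2, \ldots, L_n]$, after checking that $L_1'L_1 \in \K$ (which holds since $\K$ is a semigroup, so the product of two elements of $\K$ is again in $\K$). Next I would apply the same observation in the second slot to the tuple $(L_1'L_1, L_2, \ldots, L_n)$, obtaining $[L_1'L_1, L_2'L_2, L_3, \ldots, L_n] = [L_1'L_1, L_2, \ldots, L_n]$. Iterating over all $n$ slots and chaining the equalities gives
$$[L_1'L_1, \ldots, L_n'L_n] = [L_1'L_1, \ldots, L_{n-1}'L_{n-1}, L_n] = \cdots = [L_1'L_1, L_2, \ldots, L_n] = [L_1, \ldots, L_n],$$
which is the claim. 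One minor bookkeeping point: at each stage the tuple to which we apply multi-linearity still lies in $\K^n$, since all entries are either original $L_j$'s or products $L_i'L_i$ of semigroup elements.

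I do not expect a genuine obstacle here — the statement is a formal consequence of multi-linearity plus the vanishing of indices involving $1$-dimensional subspaces. The only thing to be careful about is the order of the induction and making sure the multi-linearity theorem, stated for the first argument, is legitimately applied to an arbitrary argument; this is immediate from the symmetry of the index (Theorem \ref{th-obvious-invariant-index}, part 1), which lets us permute any slot into the first position before applying Theorem \ref{th-multi-lin-invariant-index}. So the write-up is short: invoke symmetry to reduce to the first slot, apply multi-linearity and Proposition \ref{prop-1-dim-invariant-index} to kill one $L_i'$ at a time, and induct.
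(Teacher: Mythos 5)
Your proof is correct and is exactly the argument the paper intends: the paper's own proof simply says ``Follows from Proposition \ref{prop-1-dim-invariant-index} and Theorem \ref{th-multi-lin-invariant-index},'' which is precisely your decomposition $[\ldots, L_i'L_i, \ldots] = [\ldots, L_i', \ldots] + [\ldots, L_i, \ldots]$ followed by killing the one-dimensional term. Your attention to the bookkeeping (symmetry to reduce to the first slot, closure of $\K$ under products) is sound but not a point the paper dwells on.
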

\begin{proof} Follows from Proposition \ref{prop-1-dim-invariant-index} and
Theorem \ref{th-multi-lin-invariant-index}.
\end{proof}

\begin{Def} \label{def-birational-integral-L}
As before let us say that $f \in \c(X)$ is {\it integral over a
subspace} $L \in \K$ if $f$ satisfies an equation
$$f^m+a_1f^{m-1}+\dots +a_m=0,$$
where  $m>0$ and $a_i \in L^i$, for each $i=1,\ldots, m$.
\end{Def}

\begin{Th}[Addition of integral elements] \label{th-int-element-intersection-index}
Let $L_1 \in {\bf K}_{rat}(X)$ and let
$M_1\in {\bf K}_{rat}(X)$ be the subspace spanned by $L_1$ and
a rational function $g$ integral over $L_1$. Then for any
$(n-1)$-tuple $L_2,\dots,L_n\in {\bf K}_{rat}(X)$ we have
$$[L_1,L_2,\dots,L_n]=[M_1,L_2,\dots,L_n].$$
\end{Th}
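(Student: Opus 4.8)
The plan is to deduce this from its already-proved counterpart for regular functions on a smooth variety, Theorem \ref{6.2}, by restricting everything to a suitable smooth Zariski-open subset of $X$.

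First I would choose a closed subvariety $\Sigma \subset X$ that is admissible simultaneously for $X$ and for the finite collection $L_1, M_1, L_2, \ldots, L_n$. Such a $\Sigma$ exists: each of these finitely many subspaces contributes only finitely many irreducible pole divisors and a base locus of dimension $< n$, the singular locus of $X$ has dimension $< n$ (every component of $X$ has dimension $n$), so the union of all of these together with a chosen $\Sigma$ of dimension $<n$ is again admissible. Note that $M_1 = L_1 + \c g$ is finite dimensional, and its restriction to every irreducible component of $X$ is non-zero because $L_1$ already has this property; hence $M_1 \in \K$. Moreover the irreducible pole divisors of a function $\ell + cg \in M_1$ with $\ell \in L_1$ are among those of the functions in $L_1$ together with those of $g$, so requiring $\Sigma$ to be admissible for $M_1$ in particular forces $g$ to be regular on $U := X \setminus \Sigma$.

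On $U$ the situation is exactly that of Section \ref{sec-int-index-regular}: $U$ is smooth of pure dimension $n$, and the restrictions $L_1|_U, M_1|_U, L_2|_U, \ldots, L_n|_U$ lie in ${\bf K}_{reg}(U)$ (they are finite dimensional, consist of regular functions by admissibility condition (iii), and have empty base locus by condition (iv)). Furthermore the integral dependence relation $g^m + a_1 g^{m-1} + \cdots + a_m = 0$ with $a_i \in L_1^i$ from Definition \ref{def-birational-integral-L} is an identity of rational functions, hence persists after restriction to $U$, where now every term is a regular function; so $g|_U$ is integral over $L_1|_U$ in the sense of Definition \ref{def-integral-L}, and $M_1|_U$ is the subspace of ${\bf K}_{reg}(U)$ spanned by $L_1|_U$ and $g|_U$. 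Theorem \ref{6.2} then yields $[L_1, L_2, \ldots, L_n]_U = [M_1, L_2, \ldots, L_n]_U$. Finally, since $\Sigma$ is admissible for both $n$-tuples $(L_1, L_2, \ldots, L_n)$ and $(M_1, L_2, \ldots, L_n)$, the definition of the birationally invariant index (together with the independence of the chosen admissible set, which rests on Lemma \ref{lem-Sigma}) identifies these two $U$-indices with $[L_1, L_2, \ldots, L_n]_X$ and $[M_1, L_2, \ldots, L_n]_X$ respectively, giving the claim.

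I expect the only genuine content beyond bookkeeping to be the verification that one single admissible $\Sigma$ can serve for both sides at once and that the integral dependence relation survives restriction to $U$; once this is arranged, the statement is immediate from the regular case. The mild subtlety worth being careful about is that deleting the lower-dimensional set $\Sigma$ must not destroy the hypotheses --- non-vanishing of each $L_i|_U$ on every component of $U$, absence of a base locus, and regularity of all the functions involved --- which is precisely what conditions (i)--(iv) in the definition of an admissible subvariety are designed to guarantee.
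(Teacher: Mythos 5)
Your proof is correct and takes essentially the same approach as the paper, whose own proof consists of the single line ``Follows from Theorem \ref{6.2}''. You have simply supplied the bookkeeping that the paper leaves implicit: choosing one admissible $\Sigma$ that works for $L_1$, $M_1$, $L_2,\dots,L_n$ simultaneously, and checking that the integral dependence relation and the hypotheses of Theorem \ref{6.2} survive restriction to $X \setminus \Sigma$.
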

\begin{proof} Follows form Theorem \ref{6.2}
\end{proof}

\begin{Def} \label{def-completion}
It is well-known that the collection of all integral elements over $L$ is a vector
subspace containing $L$. Moreover if $L$ is finite dimensional then $\overline{L}$
is also finite dimensional (see \cite[Appendix 4]{Zariski}). It is called
the {\it completion of $L$} and denoted by $\overline{L}$.
\end{Def}

One can also give a description of the completion $\overline{L}$ of
a subspace $L \in \K$ purely in terms of the
semigroup $\K$ (see Theorem \ref{th-completion}).

\begin{Cor}[Intersection index and completion]
\label{cor-invariant-index-int-closure} Let $L_1 \in \K$ and $\overline{L_1}$ its completion as defined above.
Then for any $(n-1)$-tuple $L_2,\dots,L_n\in \K$ we
have
$$[L_1,L_2,\dots,L_n]=[\overline{L_1},L_2,\dots,L_n].$$
\end{Cor}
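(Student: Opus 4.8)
The plan is to reduce the statement to the already-established Theorem~\ref{th-int-element-intersection-index} by realizing the completion $\overline{L_1}$ as the result of successively adjoining finitely many integral elements to $L_1$. Since $\overline{L_1}$ is finite dimensional (by Definition~\ref{def-completion}), pick a finite spanning set $g_1, \ldots, g_r$ of $\overline{L_1}$ over $L_1$, each $g_j$ being integral over $L_1$.

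First I would set $M_0 = L_1$ and inductively define $M_j \in \K$ to be the subspace spanned by $M_{j-1}$ and $g_j$. The key point to check is that at each step $g_j$ is integral not merely over $L_1$ but over $M_{j-1}$, so that Theorem~\ref{th-int-element-intersection-index} applies to the pair $(M_{j-1}, M_j)$. This is immediate: if $g_j$ satisfies $g_j^m + a_1 g_j^{m-1} + \cdots + a_m = 0$ with $a_i \in L_1^i$, then since $L_1 \subseteq M_{j-1}$ we have $a_i \in M_{j-1}^i$, so the very same equation witnesses that $g_j$ is integral over $M_{j-1}$. Hence $M_j$ is the subspace spanned by $M_{j-1}$ and a rational function integral over $M_{j-1}$, and Theorem~\ref{th-int-element-intersection-index} gives
$$[M_{j-1}, L_2, \ldots, L_n] = [M_j, L_2, \ldots, L_n]$$
for each $j = 1, \ldots, r$.

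Chaining these equalities from $j=1$ to $j=r$ yields $[L_1, L_2, \ldots, L_n] = [M_r, L_2, \ldots, L_n]$, and since $M_r = \overline{L_1}$ by construction (it contains all the chosen generators $g_j$ and is contained in $\overline{L_1}$, which is spanned by them together with $L_1$), we are done. One should also make sure that each $M_j$ genuinely lies in $\K$, i.e. that its restriction to every irreducible component of $X$ is non-zero; but this is automatic since $M_j \supseteq L_1$ and $L_1 \in \K$.

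The only mild subtlety — and the place where a reader might want more detail — is the passage from "integral over $L_1$" to "integral over $M_{j-1}$," together with the observation that the span of $L_1$ and all the $g_j$ is exactly $\overline{L_1}$ rather than something smaller; both are routine but worth stating explicitly. There is no real obstacle here: the corollary is a formal consequence of Theorem~\ref{th-int-element-intersection-index} applied finitely many times, exploiting that integrality over a subspace is preserved under enlarging that subspace.
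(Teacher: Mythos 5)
Your proof is correct and follows the same route as the paper: the paper's own (very brief) proof says exactly that the corollary follows from Theorem~\ref{th-int-element-intersection-index} together with the fact that $\overline{L_1}$ is finite dimensional and can be spanned by $L_1$ plus finitely many integral elements. You have simply supplied the inductive chaining and the (correct, routine) observation that integrality over $L_1$ passes to integrality over any larger subspace $M_{j-1} \supseteq L_1$, which the paper leaves implicit.
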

\begin{proof} Follows form Theorem \ref{th-int-element-intersection-index} and the fact
that $\overline{L_1}$ is finite dimensional and hence can be spanned
by $L_1$ together with a finite number of extra integral elements
over $L_1$.
\end{proof}

Let us say that a subspace $L \in \K$ is {\it very big}
if the Kodaira rational map $\Phi_L$ is an embedding restricted to a
Zariski open set. {\bf We say $L$ is {\it big} if a completion $\overline{L^k}$ is very big for
some $k>0$.}

\begin{Th}[A birationally invariant version of Hodge inequality]
Let $X$ be an irreducible (possibly singular) surface and let
$L_1, L_2\in \K$ be big subspaces. Then we have
$$[L_1,L_2]^2\geq [L_1,L_1][L_2,L_2].$$
\end{Th}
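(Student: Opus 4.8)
The plan is to reduce the assertion, in three steps, to the very-ample case of Theorem~\ref{7.1}. \emph{Step 1 (reduction to very big subspaces).} By the definition of bigness there are integers $k_1,k_2\ge 1$ for which $\overline{L_1^{\,k_1}}$ and $\overline{L_2^{\,k_2}}$ are very big. Applying multilinearity of the birationally invariant index (Theorem~\ref{th-multi-lin-invariant-index}) in each slot gives $[L_1^{\,k_1},L_1^{\,k_1}]=k_1^2[L_1,L_1]$, $[L_2^{\,k_2},L_2^{\,k_2}]=k_2^2[L_2,L_2]$ and $[L_1^{\,k_1},L_2^{\,k_2}]=k_1k_2[L_1,L_2]$; and by Corollary~\ref{cor-invariant-index-int-closure} none of these three numbers changes when the spaces are replaced by their completions. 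Hence the inequality for the very big pair $\overline{L_1^{\,k_1}},\overline{L_2^{\,k_2}}$ is, after dividing by $k_1^2k_2^2$, exactly the desired inequality for $L_1,L_2$. So I may assume from now on that $L_1$ and $L_2$ are very big.

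\emph{Step 2 (passage to a smooth projective model).} Fix a smooth projective surface $X_0$ birational to $X$ (resolution of singularities of a projective closure). The pair of Kodaira maps gives a rational map $X_0\ratmap\p(L_1^*)\times\p(L_2^*)$, whose indeterminacy I resolve by a finite sequence of blow-ups to obtain a smooth projective surface $X'$, birational to $X$, with a morphism $\psi\colon X'\to\p(L_1^*)\times\p(L_2^*)$. Because $L_1$ is very big, $\psi$ is birational onto its image $Z$, an irreducible projective surface. Put $H_i=\psi^*p_i^*\mathcal{O}(1)$, where $p_i$ is the $i$-th projection; each $H_i$ is nef, and since $L_i$ is very big the image of $p_i\circ\psi$ is a surface, so $H_i$ is big and in particular $H_i^2>0$. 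Combining the birational invariance of the intersection index with Corollary~\ref{cor-int-index-lin-equ} (multiply $L_i$ by the line $\langle 1/g_i\rangle$ for a fixed $g_i\in L_i$, passing from sections to functions), I choose an admissible subvariety $\Sigma'\subset X'$ containing all poles, base loci and the exceptional locus of $\psi$, and then for generic $f_i\in L_i$ the solutions of $f_1=f_2=0$ in $X'\setminus\Sigma'$ are exactly the $\psi$-preimages of the points of $Z$ cut out by generic linear forms, one on each factor; since $\psi$ is birational onto $Z$, counting them gives $[L_i,L_j]_X=H_i\cdot H_j$ for $i,j\in\{1,2\}$. So it suffices to prove $(H_1\cdot H_2)^2\ge (H_1^2)(H_2^2)$ for the nef, big divisors $H_1,H_2$ on the smooth projective surface $X'$.

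\emph{Step 3 (perturbation by an ample class).} Fix a very ample divisor $A$ on $X'$. For each rational $\epsilon>0$ the divisor $H_i+\epsilon A$ is ample, so there is an integer $N=N(\epsilon)$ making $B_i:=N(H_i+\epsilon A)$ a very ample integral divisor. Choosing generic sections $s_i\in H^0(X',B_i)$ and enlarging $\Sigma'$ to contain $\{s_1=0\}$ and $\{s_2=0\}$, the spaces $M_i:=\{\,s/s_i : s\in H^0(X',B_i)\,\}$ of regular functions on $X'\setminus\Sigma'$ lie in ${\bf K}_{reg}(X'\setminus\Sigma')$, are very ample there, and satisfy $[M_i,M_j]_{X'\setminus\Sigma'}=B_i\cdot B_j$. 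Theorem~\ref{7.1} applied to $M_1,M_2$, after dividing by $N^4$, yields
\[
\bigl((H_1+\epsilon A)\cdot(H_2+\epsilon A)\bigr)^2\ \ge\ (H_1+\epsilon A)^2\,(H_2+\epsilon A)^2
\]
for every rational $\epsilon>0$. Both sides are polynomials in $\epsilon$, so letting $\epsilon\to 0^+$ gives $(H_1\cdot H_2)^2\ge (H_1^2)(H_2^2)$, which by Step 2 is the theorem. (If the Hodge-theoretic proof of Theorem~\ref{7.1} in the appendix is recast as the signature statement for the intersection form on the N\'eron--Severi group of $X'$, Step 3 can be skipped: one applies it directly to $H_1,H_2$ using $H_1^2>0$.)

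The main obstacle is Step 2. Constructing $X'$ and resolving the Kodaira maps is routine, but it takes genuine care to verify that the combinatorially defined index $[L_i,L_j]_X$ — a count of solutions in which poles and base points are discarded — coincides with the classical intersection number $H_i\cdot H_j$ on $X'$; the bookkeeping of exceptional divisors and of pole and base loci, and the attendant choice of the admissible $\Sigma'$, is where the real work lies. Steps 1 and 3 are then formal consequences of multilinearity and of the polynomial (hence continuous) dependence of intersection numbers on their arguments.
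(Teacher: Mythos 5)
Your Step 1 reproduces the paper's reduction exactly: by multilinearity (Theorem~\ref{th-multi-lin-invariant-index}) and invariance under completion (Corollary~\ref{cor-invariant-index-int-closure}), replacing each $L_i$ by $\overline{L_i^{\,k_i}}$ leaves the inequality unchanged, so one may assume $L_1,L_2$ are very big. At that point the paper stops: it invokes Theorem~\ref{7.1} directly, on the implicit grounds that the birationally invariant index is by definition computed on a smooth quasi-projective open set $X\setminus\Sigma$, and for very big subspaces one may shrink $\Sigma$ further so that the Kodaira maps are embeddings there, putting one squarely in the hypotheses of Theorem~\ref{7.1}. You instead take the longer road of Steps 2 and 3: build a smooth projective model $X'$ resolving the product of Kodaira maps, identify $[L_i,L_j]$ with the classical intersection numbers $H_i\cdot H_j$ of nef and big pullback divisors, then perturb by $\epsilon A$ and let $\epsilon\to 0^+$ to invoke Theorem~\ref{7.1} for very ample spaces of the form $\mathcal{L}(B_i)/s_i$. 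This is correct, and as you note Step~3 collapses once one is willing to cite the Hodge index theorem for nef big divisors on a smooth projective surface directly --- which is essentially what the appendix's Theorem~\ref{th-Hodege-appendix} does. So your argument is a valid alternative that is closer in spirit to the appendix proof than to the one-line reduction the paper gives here; it buys you explicitness at the cost of re-deriving, in Step~2, the identification of the combinatorial index with classical intersection numbers, a bookkeeping task the paper's short proof avoids by staying entirely inside the open-set framework of Section~\ref{sec-int-index-rational}.
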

\begin{proof}
{\bf It follows from the multi-linearity of
the intersection index and invariance under the completion (Theorem \ref{th-multi-lin-invariant-index}
and Corollary \ref{cor-invariant-index-int-closure})
that if we replace each $L_i$ with the completion of any power $L_i^{k_i}$
the inequality does not change.} Thus it is enough to prove the theorem for
very big subspaces, in which case it follows from Theorem \ref{7.1}.
\end{proof}

\begin{Th}[Analogue of the Alexandrov-Fenchel
inequality for birationally invariant index] Let $X$ be an
irreducible $n$-dimensional (possibly singular) variety and let
$L_1,\dots,L_n\in \K$ be big subspaces. Then the
following inequality holds:
$$[L_1,L_2, L_3,\dots,L_n]^2\geq [L_1,L_1,
L_3,\dots,L_n][L_2,L_2,L_3,\dots,L_n].$$
\end{Th}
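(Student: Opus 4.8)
The plan is to reduce the $n$-dimensional case to the surface case exactly as in the proof of Theorem~\ref{7.2} (the regular/very ample version), using the birationally invariant tools established in this section. First I would use the reduction-to-completions trick already recorded in the proof of the birationally invariant Hodge inequality: by multi-linearity (Theorem~\ref{th-multi-lin-invariant-index}) and invariance of the index under completion (Corollary~\ref{cor-invariant-index-int-closure}), replacing each $L_i$ by $\overline{L_i^{k_i}}$ changes neither side of the desired inequality (both sides scale by the same power of the $k_i$). Since the $L_i$ are big, for a suitable choice of exponents the spaces $\overline{L_i^{k_i}}$ are very big. Hence it suffices to prove the inequality when all $L_i$ are very big.

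Next I would pass to a birational model on which the very big spaces become very ample, so that the earlier regular-function machinery applies. Concretely, for a very big space $L$ the Kodaira rational map $\Phi_L$ is a birational isomorphism onto its image in some projective space; taking the closure of the graph of the (finitely many) Kodaira maps $\Phi_{L_1}, \ldots, \Phi_{L_n}$ and resolving, one obtains a smooth irreducible $n$-dimensional quasi-projective variety $X'$ birational to $X$ on which the pullbacks $\tau^* L_i$ all lie in ${\bf K}_{reg}(X')$ and are very ample. By the birational invariance of the intersection index (the proposition immediately preceding Proposition~\ref{prop-1-dim-invariant-index}), $[L_1,\ldots,L_n]_X = [\tau^*L_1,\ldots,\tau^*L_n]_{X'}$, and similarly for the other two indices appearing in the inequality, so it is enough to prove the inequality on $X'$.

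Now I would run the argument of Theorem~\ref{7.2} verbatim on $X'$: by the Bertini--Lefschetz theorem together with Theorem~\ref{3.5}, there is an $(n-2)$-tuple $f_3 \in \tau^*L_3, \ldots, f_n \in \tau^*L_n$ such that $f_3 = \cdots = f_n = 0$ cuts out a smooth irreducible surface $Y \subset X'$ with
$$[\tau^*L_1, \tau^*L_2, \tau^*L_3, \ldots, \tau^*L_n] = [\tau^*L_1, \tau^*L_2]_Y,$$
and likewise for the two "diagonal" indices. Applying Theorem~\ref{7.1} (the Hodge inequality on the surface $Y$, whose restricted spaces are still very ample) gives $[\tau^*L_1,\tau^*L_2]_Y^2 \geq [\tau^*L_1,\tau^*L_1]_Y [\tau^*L_2,\tau^*L_2]_Y$, which is the claim.

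The main obstacle I anticipate is the reduction from "big" to "very ample on a good model": one must check that a common resolution of the Kodaira maps of the finitely many very big spaces $\overline{L_i^{k_i}}$ exists as a smooth quasi-projective variety, that the pulled-back spaces actually land in ${\bf K}_{reg}$ (no base locus, regular — this is where the definition of admissible $\Sigma$ and Remark~\ref{rem-Kodaira-map-rational} are used to control poles and base loci before blowing up), and that they are genuinely very ample rather than merely very big there. Everything after that point is a direct transcription of the already-proved regular case, so once the model is set up correctly the proof is short.
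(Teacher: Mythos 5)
Your overall strategy is the one the paper uses: reduce via multi-linearity and completion-invariance (Theorem~\ref{th-multi-lin-invariant-index}, Corollary~\ref{cor-invariant-index-int-closure}) to very big subspaces, then invoke the regular/very ample version (Theorem~\ref{7.2}). The scaling check on both sides of the inequality is also fine. The one place your write-up diverges from what is actually needed — and where you rightly flag a worry — is the passage from ``very big'' to ``very ample on a smooth model.'' The construction you sketch (close up the graph of the product of Kodaira maps inside $\p^N\times\p({\bf L}^*)$ and then resolve) does \emph{not} automatically give very ample spaces: the projection from the graph closure (or its resolution) to $\p(L_i^*)$ is merely a birational morphism onto its image, and such a morphism can contract curves, so $\tau^*L_i$ may again only be very big there, not very ample. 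Iterating the construction does not obviously terminate.

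The fix is simpler than what you propose and is what the paper implicitly has in mind. ``Very big'' means $\Phi_{L_i}$ is an embedding on \emph{some} nonempty Zariski open set $U_i \subset X$. Enlarge an admissible set $\Sigma$ for $L_1,\dots,L_n$ so that it also contains $X \setminus \bigcap_i U_i$; since $X$ is irreducible this is still a proper closed subset, hence still admissible. On the smooth irreducible quasi-projective variety $U = X\setminus\Sigma$ each $L_i$ now lies in ${\bf K}_{reg}(U)$ \emph{and} has Kodaira map that is an embedding on all of $U$, i.e.\ is very ample in the sense of Section~\ref{subsec-7}. By the definition of the birationally invariant index, all three indices in the inequality computed on $X$ equal the corresponding indices on $U$, and Theorem~\ref{7.2} applied on $U$ finishes the proof. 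So: same reduction, but replace the graph-closure-plus-resolution step by simply shrinking to a suitable Zariski open subset; no resolution of singularities is needed, and the very-ampleness is then immediate rather than something one must verify.
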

\begin{proof}
As above, {\bf by the multi-linearity and invariance under the completion},
it is enough to prove the theorem for
very big subspaces, and the theorem
in this case follows from Theorem \ref{7.2}.
\end{proof}

The same inequalities as in Corollary \ref{cor-Alex-Fenchel-regular}
hold for the birationally invariant index.

\section{Grothendieck group of subspaces of rational functions and Cartier divisors}
\label{sec-Grothendieck-gp-Cartier}
\subsection{Generalities on semigroups of rational functions} \label{subsec-Grothendieck-group}
Let $K$ be a commutative semigroup (whose operation we denote by
multiplication). $K$ is said to have the {\it cancellation property} if
for $x,y,z \in K$, the equality $xz=yz$ implies $x=y$. Any
commutative semigroup $K$ with the cancellation property can be extended
to an abelian group $G(K)$ consisting of formal quotients $x/y$, $x,
y \in K$. For $x,y,z,w \in K$ we identify the quotients $x/y$ and
$w/z$, if $xz = yw$.

Given a commutative semigroup $K$ (not necessarily with the cancellation
property), we can get a semigroup with the cancellation property by
considering the equivalence classes of a relation $\sim$ on $K$:
for $x, y \in K$ we say $x \sim y$ if there is $z \in K$ with $xz = yz$. The
collection of equivalence classes $K / \sim$ naturally has structure
of a semigroup with cancellation property. Let us denote the group
of formal quotients of $K / \sim$ again by $G(K)$. It is called the {\it
Grothendieck group of the semigroup $K$}. The map which sends $x \in K$ to its
equivalence class $[x] \in K / \sim$ gives a natural homomorphism
$\phi: K \to G(K)$.

The Grothendieck group  $G(K)$ together with the homomorphism $\phi: K \to
G(K)$ satisfies the following universal property: for any other
group $G'$ and a homomorphism $\phi': K \to G'$, there exists a unique
homomorphism $\psi: G(K) \to G'$  such that $\phi' = \psi \circ
\phi$.

\begin{Def}
For two subspaces $L, M \in {\bf K}_{rat}(X)$, we write $L \sim_{rat} M$
if $L$ and $M$ are equivalent as elements of the multiplicative
semigroup ${\bf K}_{rat}(X)$, that is, if there is $N \in {\bf K}_{rat}(X)$
with $LN = MN$.
\end{Def}

From multi-linearity of the intersection index it follows that the
intersection index is invariant under the equivalence of subspaces,
namely if $L_1, \ldots, L_n$ and $M_1, \ldots, M_n \in \K$ are $n$-tuples of subspaces and for each $i$, $L_i
\sim_{rat} M_i$ then
$$[L_1, \ldots, L_n] = [M_1, \ldots, M_n].$$ Hence, generalizing the
situation in Section \ref{sec-B-K-rational}, one can extend the
intersection index to the Grothendieck group of $\K$. We denote this
Grothendieck group by ${\bf G}_{rat}(X)$.

For $L \in \K$ recall that the completion
$\overline{L}$ is the collection of all rational functions integral
over $L$ (Definition \ref{def-completion}). The following result
describes the completion $\overline{L}$ of a subspace $L$ as the
largest subspace equivalent to $L$ (see \cite[Appendix 4]{Zariski}
for a proof).
\begin{Th} \label{th-completion}
For $L \in {\bf K}_{rat}(X)$, the completion $\overline{L}$ is the largest
subspace which is equivalent to $L$. That is, 1) $\overline{L}
\sim_{rat} L$ and 2) if for $M \in {\bf K}_{rat}(X)$ we have $M \sim_{rat}
L$ then $M \subset \overline{L}$.
\end{Th}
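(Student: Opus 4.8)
For $L \in {\bf K}_{rat}(X)$, the completion $\overline{L}$ is the largest subspace which is equivalent to $L$: (1) $\overline{L} \sim_{rat} L$ and (2) if $M \in {\bf K}_{rat}(X)$ satisfies $M \sim_{rat} L$ then $M \subset \overline{L}$.
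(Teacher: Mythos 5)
Your proposal contains no proof at all: it simply restates the theorem verbatim and stops. Both parts (1) and (2) require genuine arguments, and neither is supplied. For reference, the paper itself does not prove Theorem~\ref{th-completion} in-line but instead cites \cite[Appendix 4]{Zariski}; your submission would need to either reproduce that argument or give an independent one.

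To indicate what is actually needed: for part (1), one uses that $\overline{L}$ is finite dimensional, so $\overline{L}$ is spanned by $L$ together with finitely many elements $g_1,\dots,g_r$ integral over $L$. For a single integral element $g$ satisfying $g^m + a_1 g^{m-1} + \cdots + a_m = 0$ with $a_i \in L^i$, one sets $P = g^{m-1}L + g^{m-2}L^2 + \cdots + L^m$ and checks $gP \subset LP$, whence $(L + \c g)P = LP$, i.e.\ $L + \c g \sim_{rat} L$; iterating over $g_1,\dots,g_r$ yields $\overline{L} \sim_{rat} L$. (This is essentially the computation carried out in the paper's proof of Corollary~\ref{cor-LDL}.) For part (2), suppose $MN = LN$ for some $N \in \K$, take $g \in M$, and note $gN \subset LN$. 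Choosing a basis $n_1,\dots,n_d$ of $N$, write $g n_i = \sum_j c_{ij} n_j$ with $c_{ij} \in L$; the determinant trick applied to the matrix $gI - (c_{ij})$ produces a monic polynomial relation $g^d + b_1 g^{d-1} + \cdots + b_d = 0$ with $b_i \in L^i$, so $g \in \overline{L}$. None of this appears in your proposal, so as written it is not a proof.
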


Theorem \ref{th-completion} readily implies that the
intersection indices of subspace $L \in \K$ and its
completion $\overline{L}$ are the same (Corollary
\ref{cor-invariant-index-int-closure}).

\begin{Rem}
Let us call a subspace $L$, {\it complete} if $\overline{L} = L$. If
$L$ and $M$ are complete subspaces, then $LM$ is not necessarily
complete. For two complete subspaces $L, M \in \K$,
define $$L * M = \overline{LM}.$$ The collection of complete
subspaces together with $*$ is a semigroup with the cancellation
property. Theorem \ref{th-completion} in fact shows that $L \mapsto
\overline{L}$ gives an isomorphism between the quotient semigroup
$\K / \sim_{rat}$ and the semigroup of complete
subspaces (with $*$).
\end{Rem}

In analogy with the linear equivalence of divisors, we define the linear
equivalence for subspaces of rational functions.
\begin{Def} \label{def-lin-equ-subspace}
Given $L, M \in \K$, we say $L$ is {\it linearly
equivalent to} $M$ if there is a rational function which is not
identically zero on any irreducible component of $X$ and $L = fM$.
We then write $L \sim_{lin} M$. For two classes of subspaces $[L]$,
$[M] \in \K / \sim_{rat}$ we say $[L]$ is {\it
linearly equivalent to} $[M]$ and again write $[L] \sim_{lin} [M]$
if there are $L' \sim_{rat} L$ and $M' \sim_{rat} M$ such that $L'$
is linearly equivalent to $M'$.
\end{Def}
The following is easy to verify. Part 2) is just Corollary
\ref{cor-int-index-lin-equ}.
\begin{Prop} \label{prop-int-index-lin-equ}
1) The linear equivalence $\sim_{lin}$ is an equivalence relation on
$\K$ which respects the semigroup operation.
Similarly, $\sim_{lin}$ is an equivalence relation on the factor
semigroup $\K / \sim_{rat}$ and it respects the
semigroup operation. Hence it extends to an equivalence relation on
the Grothendieck group ${\bf G}_{rat}(X)$. 2) The birationally invariant
index is preserved under the linear equivalence. and thus
induces an intersection index on the factor group ${\bf G}_{rat}(X) / \sim_{lin}$.
\end{Prop}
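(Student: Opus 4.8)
The plan is to verify the three assertions of Proposition~\ref{prop-int-index-lin-equ} in turn, reducing everything to facts already in hand: the multi-linearity of the birationally invariant index (Theorem~\ref{th-multi-lin-invariant-index}), its invariance under $\sim_{rat}$ (noted just before the definition of ${\bf G}_{rat}(X)$), and Corollary~\ref{cor-int-index-lin-equ}. First I would dispose of the claims in part~1) that $\sim_{lin}$ is an equivalence relation. Reflexivity is witnessed by the constant function $1$; symmetry follows because if $L = fM$ then $M = f^{-1}L$, and $f^{-1}$ is again a rational function not identically zero on any component of $X$; transitivity is the observation that if $L = fM$ and $M = gN$ then $L = (fg)N$, and a product of two such nonvanishing rational functions is again of the required type (on an irreducible component $X_i$, neither $f$ nor $g$ vanishes identically, so $fg$ does not either). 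Compatibility with the semigroup operation: if $L_1 = f_1 M_1$ and $L_2 = f_2 M_2$, then $L_1 L_2$ is spanned by products $(f_1 a)(f_2 b) = (f_1 f_2)(ab)$ with $a \in M_1$, $b \in M_2$, so $L_1 L_2 = (f_1 f_2)(M_1 M_2)$, hence $L_1 L_2 \sim_{lin} M_1 M_2$.

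Next I would pass to the factor semigroup $\K / \sim_{rat}$. One must check that $\sim_{lin}$ as defined on classes ($[L] \sim_{lin} [M]$ iff some $L' \sim_{rat} L$, $M' \sim_{rat} M$ have $L' = fM'$) is still an equivalence relation and still respects multiplication. Reflexivity and symmetry are immediate. For transitivity, suppose $[L] \sim_{lin} [M]$ via $L' \sim_{rat} L$, $M' \sim_{rat} M$ with $L' = fM'$, and $[M] \sim_{lin} [N]$ via $M'' \sim_{rat} M$, $N'' \sim_{rat} N$ with $M'' = gN''$. The mild technical point here is that the two representatives $M'$ and $M''$ of $[M]$ need not coincide; but $M' \sim_{rat} M''$, and multiplying through by a common $P$ with $M'P = M''P$ one finds $L'P = f M'P = f M''P = fg N''P$, so $L'P \sim_{lin} N''P$ with $L'P \sim_{rat} L$ and $N''P \sim_{rat} N$. (Using $L'P = fM'P$: since $L' = fM'$ we get $L'P$ spanned by $f$ times the spanning set of $M'P$.) Compatibility with multiplication on classes follows from the semigroup-level statement already proved, since $\sim_{rat}$ is a congruence. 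Finally, because $\sim_{lin}$ is a congruence on $\K/\sim_{rat}$ and hence extends to the Grothendieck group ${\bf G}_{rat}(X)$ by declaring $x/y \sim_{lin} x'/y'$ when $xy' \sim_{lin} x'y$ (one checks this is well-defined using that $\sim_{lin}$ respects products), part~1) is complete.

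For part~2), the content is that the birationally invariant index descends to ${\bf G}_{rat}(X)/\sim_{lin}$. On the semigroup $\K$ this is exactly Corollary~\ref{cor-int-index-lin-equ}: writing $L_i = f_i M_i$ with each $f_i$ a rational function not identically zero on any component of $X$ — equivalently $L_i = L_i' M_i$ with $L_i' = \langle f_i \rangle$ one-dimensional — that corollary gives $[L_1, \ldots, L_n] = [M_1, \ldots, M_n]$. Since the index is already known to be $\sim_{rat}$-invariant and multi-linear, it is well-defined on ${\bf G}_{rat}(X)$, and the computation above shows the value is unchanged when each argument is replaced by a linearly equivalent one (first replacing by $\sim_{rat}$-representatives, then stripping off the one-dimensional factor via Corollary~\ref{cor-int-index-lin-equ}); hence it factors through ${\bf G}_{rat}(X)/\sim_{lin}$. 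I do not expect a serious obstacle anywhere: the only place demanding care is the well-definedness in the transitivity argument and in extending $\sim_{lin}$ to the Grothendieck group, where one has to juggle several $\sim_{rat}$-representatives at once and use that $\sim_{lin}$ is a congruence to absorb a common multiplier — a routine diagram chase rather than a genuine difficulty.
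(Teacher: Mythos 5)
Your overall plan is sound, and most of the verification is fine: reflexivity, symmetry, transitivity, and multiplicativity of $\sim_{lin}$ on $\K$ itself are all handled correctly, and part 2) reduces, as the paper indicates, to Corollary~\ref{cor-int-index-lin-equ} together with $\sim_{rat}$-invariance and multi-linearity. (The paper offers no details here, stating only that part 1) is ``easy to verify'' and that part 2) is Corollary~\ref{cor-int-index-lin-equ}, so your write-up is a reasonable filling-in.)

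However, there is a genuine error in your transitivity argument for $\sim_{lin}$ on the quotient $\K/\sim_{rat}$. After choosing $P$ with $M'P=M''P$ and deducing $L'P = fg\,N''P$, you assert that $L'P \sim_{rat} L$ and $N''P \sim_{rat} N$. Neither is true: from $L' \sim_{rat} L$ you only get $L'P \sim_{rat} LP$, and likewise $N''P \sim_{rat} NP$. So what you have actually produced is a pair of representatives of $[LP]$ and $[NP]$, not of $[L]$ and $[N]$, and this does not directly exhibit $[L] \sim_{lin} [N]$. The fix is to read the equation $L'P = fg\,N''P$ as saying (by the very definition of $\sim_{rat}$, with witness $P$) that $L' \sim_{rat} fg\,N''$. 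Combined with $L' \sim_{rat} L$ this gives $L \sim_{rat} fg\,N''$, so $A := fg\,N''$ and $B := N''$ are admissible representatives of $[L]$ and $[N]$ respectively, with $A = fg\,B$; hence $[L] \sim_{lin} [N]$. Equivalently, one can first establish the cleaner characterization that $[L] \sim_{lin} [M]$ if and only if $[M] = [fL]$ for some rational $f$ not identically zero on any component (the nontrivial direction uses that $\sim_{rat}$ is a congruence, so $L' = fM'$ and $L' \sim_{rat} L$ give $M' = f^{-1}L' \sim_{rat} f^{-1}L$); with this description transitivity and the extension to ${\bf G}_{rat}(X)$ become immediate.
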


The intersection index $[L_1,\dots,L_n]_X$ can be computed separately on each irreducible component of the variety $X$. So without lost of generality we can only consider irreducible varieties.

\subsection{Cartier divisor associated to a subspace of rational functions
with a regular Kodaira map} \label{subsec-regular-Kodaira}  A {\it
Cartier divisor} on a projective irreducible variety $X$ is a divisor which can
be represented locally as a divisor of a rational function. Any
rational function $f$ defines a {\it principal Cartier divisor}
denoted by $(f)$. The Cartier divisors are closed under the addition and
form an abelian group which we will denote by $\Div(X)$. A
dominant morphism $\Phi:X\to Y$ between varieties $X$ and $Y$ gives a pull-back
homomorphism $\Phi^*: \Div(Y)\to \Div(X)$. Two Cartier divisors are
linearly equivalent if their difference is a principle divisor. The
group of Cartier divisors modulo linear equivalence is called the
Picard group of $X$ and denoted by $\Pic(X)$. One has an
intersection theory on $\Pic (X)$: for given Cartier divisors
$D_1,\dots, D_n$ on an $n$-dimensional projective variety  there is
an intersection index $[D_1,\dots,D_n]$ which obeys the usual
properties (see \cite{Fulton}).

Now let us return back to the subspaces of rational functions. For a
subspace $L\in {\bf K}_{rat}(X)$, in general, the Kodaira map $\Phi_L$ is
a rational map, possibly not defined everywhere on $X$.

\begin{Def}
We denote the collection of subspaces $L\in {\bf K}_{rat}(X)$
for which the rational
Kodaira map $\Phi_L:X \ratmap \p(L^*)$ extends to a regular map defined everywhere on $X$,
by ${\bf K}_{Cart}(X)$. A subspace $L \in {\bf K}_{Cart}(X)$ is called a {\it subspaces with regular Kodaira map}.
\end{Def}

One can verify that the collection ${\bf K}_{Cart}(X)$ is closed under
the multiplication and under the linear equivalence, i.e. if $L\in
{\bf K}_{Cart}(X)$ and $f$ is a rational function which is not identically
equal to zero on any irreducible component of $X$, then $fL\in
{\bf K}_{Cart}(X)$. Moreover, $\Phi_L= \Phi_{fL}$.

\begin{Def}
To a subspace $L \in {\bf K}_{Cart}(X)$ there naturally corresponds
a Cartier divisor $\mathcal{D}(L)$ as follows: each rational function $h\in L$
defines a hyperplane $H=\{ h=0\}$ in $\p(L^*)$. The divisor $\mathcal{D}(L)$ is
the difference of the pull-back divisor $\Phi ^*_L(H)$ and the
principal divisor $(h)$.
\end{Def}

\begin{Th} \label{th-DL-well-defined}
Let $X$ be an irreducible projective variety. Then:
1) For any $L\in {\bf K}_{Cart}(X)$ the divisor $\mathcal{D}(L)$ is well-defined,
i.e. is independent of the choice of a function $h\in L$.
2) The  map $L\mapsto \mathcal{D}(L)$ is a homomorphism from the semigroup
${\bf K}_{Cart}(X)$ to the semigroup $\Div(X)$ which respects the linear
equivalence.
3) Let ${\bf G}_{Cart}(X)$ denote the Grothendieck group of the semigroup ${\bf K}_{Cart}(X)$.
The map $L \mapsto \mathcal{D}(L)$ extends to a homomorphism
$\rho: {\bf G}_{Cart}(X) \to \Div(X)$.
4) The map $L\mapsto \mathcal{D}(L)$ preserves the intersection index, i.e.
for $ L_1,\dots, L_n\in {\bf K}_{Cart}(X)$ we have $$[L_1,\dots,L_n]=
[\mathcal{D}(L_1),\dots,\mathcal{D}(L_n)],$$ where the right-hand side is the
intersection index of Cartier divisors.
\end{Th}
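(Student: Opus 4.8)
The plan is to prove the four parts in order, since each builds on the previous. For part 1), fix $L \in {\bf K}_{Cart}(X)$ and two functions $h_1, h_2 \in L$, neither identically zero. Each $h_i$, viewed as a linear functional on $L^*$, cuts out a hyperplane $H_i \subset \p(L^*)$. The key observation is that on $\p(L^*)$ the rational function $h_1/h_2$ has divisor $H_1 - H_2$ (as a principal divisor on $\p(L^*)$ the ratio of two linear forms has divisor the difference of the corresponding hyperplanes). Pulling back under the regular morphism $\Phi_L$ and using that pull-back commutes with taking the divisor of a rational function, we get $\Phi_L^*(H_1) - \Phi_L^*(H_2) = \Phi_L^*\big((h_1/h_2)\big) = (\Phi_L^* (h_1/h_2))$. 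But $\Phi_L^*(h_i/h_j)$, computed in the homogeneous coordinates of $\p(L^*)$ dual to a basis of $L$, is precisely the rational function $h_1/h_2$ on $X$, so $\Phi_L^*(H_1) - \Phi_L^*(H_2) = (h_1) - (h_2)$, which rearranges to $\Phi_L^*(H_1) - (h_1) = \Phi_L^*(H_2) - (h_2)$. This is exactly the claim that $\mathcal{D}(L)$ does not depend on the choice of $h$.

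For part 2), I would take $L, M \in {\bf K}_{Cart}(X)$ and compute $\mathcal{D}(LM)$ using a product function $h_L h_M$ with $h_L \in L$, $h_M \in M$; the Segre-type relation between $\p((LM)^*)$ and $\p(L^*) \times \p(M^*)$, or more directly a coordinate computation, shows the hyperplane attached to $h_L h_M$ pulls back to the sum of the pull-backs of the hyperplanes attached to $h_L$ and $h_M$, while $(h_L h_M) = (h_L) + (h_M)$; subtracting gives $\mathcal{D}(LM) = \mathcal{D}(L) + \mathcal{D}(M)$. Compatibility with linear equivalence follows from $\Phi_{fL} = \Phi_L$ (noted just before the theorem) together with the fact that if $h \in L$ then $fh \in fL$ and $(fh) = (f) + (h)$, so $\mathcal{D}(fL) = \Phi_L^*(H) - (fh) = \mathcal{D}(L) - (f)$, i.e. $\mathcal{D}(fL) \sim \mathcal{D}(L)$. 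Part 3) is then immediate from the universal property of the Grothendieck group: $\mathcal{D}: {\bf K}_{Cart}(X) \to \Div(X)$ is a semigroup homomorphism into the abelian group $\Div(X)$, hence factors uniquely through $\rho: {\bf G}_{Cart}(X) \to \Div(X)$.

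Part 4) is the substantive point and where I expect the main work to lie. The strategy is to reduce the equality $[L_1, \ldots, L_n] = [\mathcal{D}(L_1), \ldots, \mathcal{D}(L_n)]$ to the classical statement that the intersection index of Cartier divisors is computed by counting points of the intersection of generic members of the associated linear systems. Concretely: for each $L_i$, a generic function $f_i \in L_i$ is of the form $\langle \xi_i, \cdot \rangle$ for a generic $H_i \in \p(L_i^*)$, and the zero set $\{f_i = 0\}$ on $X$, away from the base locus and poles, coincides with $\Phi_{L_i}^{-1}(H_i)$, which is a generic member of the sublinear system $|\mathcal{D}(L_i)|$ cut out by $\Phi_{L_i}^*$. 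By Theorem \ref{3.5} (reducing to a curve) together with Proposition \ref{3.4}, the birationally invariant index $[L_1, \ldots, L_n]$ equals the number of points in $\bigcap_i \Phi_{L_i}^{-1}(H_i)$ for generic $H_i$, with no contribution from the admissible set $\Sigma$. On the other hand, by the standard properties of intersection of Cartier divisors on a projective variety (Kleiman's theorem / Bertini-type genericity, as in \cite{Fulton}), $[\mathcal{D}(L_1), \ldots, \mathcal{D}(L_n)]$ is computed by the same generic count once one knows the linear systems $\Phi_{L_i}^*|H_i|$ have no base points outside $\Sigma$ and move enough to make the intersection transverse and disjoint from $\Sigma$ — which is precisely the content of the admissibility of $\Sigma$ and Proposition \ref{3.2}. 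The hard part will be carefully matching the two genericity statements: ensuring that the Zariski-open set of good tuples $(f_1, \ldots, f_n)$ in $\prod L_i$ coming from our intersection-index machinery corresponds to a Zariski-open set of good hyperplane tuples for the divisor-theoretic count, and that neither count picks up stray intersection points on $\Sigma$ or at infinity. Once that dictionary is in place, both sides equal the common generic number and the theorem follows.
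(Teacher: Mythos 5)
Your treatment of parts 1)--3) is correct and more detailed than the paper, which dismisses these as obvious. Your part 1) argument via the principal divisor $(h_1/h_2) = H_1 - H_2$ on $\p(L^*)$ and compatibility of pull-back of rational functions with pull-back of principal divisors is exactly the right computation (it implicitly assumes $\dim L \geq 2$ so that $\Phi_L$ is nonconstant, but the $1$-dimensional case is trivial since $h$ is unique up to scalar). Parts 2) and 3) are fine.

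For part 4), however, you are missing the key simplifying observation the paper makes, and as written your argument has a genuine gap. The paper first observes that \emph{both} intersection indices are multi-linear (Theorem \ref{th-multi-lin-invariant-index} on the subspace side, and the standard multi-linearity of Cartier-divisor intersection on the other), so by polarization it suffices to prove the equality in the \emph{diagonal} case $L_1 = \cdots = L_n = L$. This collapses the whole genericity-matching problem you flag as ``the hard part'': with a single $L$, one only needs $n$ generic hyperplanes $H_1, \ldots, H_n$ in $\p(L^*)$ such that the $\Phi_L^*(H_i)$ meet transversally away from the base locus and poles of $L$, which is immediate from Bertini. In contrast, your direct approach requires a simultaneous Bertini/Kleiman genericity statement across $n$ possibly unrelated linear systems $\Phi_{L_1}^*|H_1|, \ldots, \Phi_{L_n}^*|H_n|$, and a careful identification of the generic locus in $\prod_i L_i$ with the generic locus in $\prod_i \p(L_i^*)$; you acknowledge this is nontrivial but do not supply it. The diagonal reduction makes that work unnecessary. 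You should add the multi-linearity-plus-polarization step at the start of part 4); the rest of your sketch then goes through essentially as the paper does it.
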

\begin{proof}
Statements 1) and 2) are obvious. Statement 3) follows from 2). Let us
prove 4). Since both intersection indices for subspaces and for
Cartier divisors are multi-linear, it is enough to prove the
equality  when $L_1=\dots=L_n=L$. The divisor $\mathcal{D}(L)$ is
linearly equivalent to the pull-back of any hyperplane section in
$p(L^*)$. So the self-intersection index of $\mathcal{D}(L)$ is
equal to the intersection index of the pull-back of $n$-generic
hyperplanes, i.e. $ [\mathcal{D}(L),\dots, \mathcal{D}(L)]=
[\Phi^*_L(H_1), \dots, \Phi^*_L(H_n)]$ where the $H_i$ are arbitrary
hyperplanes in $\p(L^*)$. We can choose the hyperplanes $H_i$ in
such a way, that the divisors $\Phi^*_L(H_i)$ intersect transversally
and the intersection points do not belong to the base locus of $L$
as well as the poles of the functions in $L$. In this case, by
the definition, $[L_1,\dots, L_n]$ is equal to the right-hand side in
4) and the proof of theorem is finished.
\end{proof}

\begin{Def}
The subspace $\mathcal{L}(D)$ associated to a Cartier
divisor $D$ is the collection of all rational functions $f$ such that
the divisor $(f)+ D$ is effective. (by definition $0 \in L(D)$.)
\end{Def}

The following well-known fact can be found in \cite[Chap. 2, Theorem
5.19]{Hartshorne}.
\begin{Th} \label{th-LD-finite}
When $X$ is projective
$\mathcal{L}(D)$ is finite dimensional.
\end{Th}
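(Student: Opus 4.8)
The plan is to reduce to the case of an effective divisor and then prove finiteness by a double induction, on $\dim X$ and on the size of the divisor.

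First I would dispose of the trivial case $\mathcal{L}(D) = 0$. Otherwise, fix a nonzero $g \in \mathcal{L}(D)$ and set $E = (g) + D$, which is effective by the definition of $\mathcal{L}(D)$. For every $f \in \mathcal{L}(D)$ one has $(f/g) + E = (f) + D \geq 0$, so $f \mapsto f/g$ is a linear isomorphism from $\mathcal{L}(D)$ onto $\mathcal{L}(E)$. Hence it suffices to treat the case $D \geq 0$, and from now on I write $D = \sum_i n_i D_i$ with $D_i$ distinct prime divisors and $n_i > 0$.

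Next I would induct on $n = \dim X$, and for fixed $n$ on the total multiplicity $\sum_i n_i$ of $D$. If $n = 0$ then $X$ is a point and $\mathcal{L}(D) = \c$; and for any $n$, if $D = 0$ then $\mathcal{L}(0)$ is the algebra of global regular functions on the irreducible projective variety $X$, which by completeness of $X$ reduces to the constants, so $\dim \mathcal{L}(0) = 1$. If $D > 0$, pick a prime component $Y = D_1$ and write $D = Y + D'$ with $D' \geq 0$ of smaller total multiplicity. Comparing allowed pole divisors gives an inclusion of coherent sheaves $\mathcal{O}_X(D') \hookrightarrow \mathcal{O}_X(D)$ whose cokernel $\mathcal{Q}$ is supported on $Y$; taking global sections produces an exact sequence
$$0 \longrightarrow \mathcal{L}(D') \longrightarrow \mathcal{L}(D) \longrightarrow H^0(Y, \mathcal{Q}).$$
The term $\mathcal{L}(D')$ is finite-dimensional by the secondary induction. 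For $H^0(Y, \mathcal{Q})$, I would choose near a general point of $Y$ a local uniformizer $t$ for $Y$ and consider the map $f \mapsto (t^{\,n_1} f)|_Y$; pulling back to the normalization $\widetilde{Y}$ and bounding orders, one checks that this carries $\mathcal{L}(D)$ into $\mathcal{L}(D_Y)$ for an explicit divisor $D_Y$ on the $(n-1)$-dimensional projective variety $\widetilde{Y}$, which is finite-dimensional by the primary induction. Therefore $\mathcal{L}(D)$ is finite-dimensional.

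The main obstacle is the middle step: identifying the cokernel sheaf $\mathcal{Q}$ and the space $H^0(Y, \mathcal{Q})$ with something of the form $\mathcal{L}(D_Y)$ on $\widetilde{Y}$ when $X$ and $Y$ are singular. One must describe $\mathcal{O}_X(D)$ and its quotients with care (note that although $D$ is Cartier, the prime component $Y$ and the remainder $D'$ need not be), and keep track of pole orders under the normalization map $\widetilde{Y} \to Y$. A cleaner alternative, and the route underlying the cited reference, is simply to observe that $\mathcal{L}(D) = H^0(X, \mathcal{O}_X(D))$ with $\mathcal{O}_X(D)$ a line bundle on the projective variety $X$, and to invoke Serre's finiteness theorem for the cohomology of coherent sheaves on projective varieties.
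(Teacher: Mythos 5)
The paper does not actually prove this theorem; it simply cites Hartshorne, Chapter II, Theorem 5.19, which is precisely the statement you reach in your final paragraph: identify $\mathcal{L}(D)$ with $H^0(X, \mathcal{O}_X(D))$, note that $\mathcal{O}_X(D)$ is a coherent sheaf on $X$ (in fact a line bundle, since $D$ is Cartier), and invoke finiteness of global sections of coherent sheaves on a projective scheme over a field. That closing observation is the complete argument and coincides with the paper's citation.

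The inductive sketch that precedes it contains genuine gaps, which you largely identify yourself. After decomposing the effective Cartier divisor into prime Weil components, the partial sums $D'$ and the chosen component $Y$ need not be Cartier on a singular $X$; then $\mathcal{O}_X(D')$ may fail to be invertible and the cokernel $\mathcal{Q}$ is not of the simple form $\mathcal{O}_Y(D_Y)$. The map $f \mapsto (t^{n_1}f)|_Y$ is only defined after choosing a local equation $t$ for $Y$, which exists globally exactly when $Y$ is Cartier; without that, the construction yields a section of a sheaf on $Y$ rather than a rational function, and one is forced back into the sheaf-theoretic formulation that Serre's finiteness already subsumes. There is also a hidden appeal to normality: the induction on total Weil multiplicity presupposes a clean Cartier--Weil dictionary that does not hold on a general projective variety. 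Repairing all of this essentially reproves coherent-sheaf finiteness from scratch, so the clean route is the one you give at the end; keep that as the proof and drop the induction.
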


The next proposition is a direct corollary of the definition.
\begin{Prop} Let $L\in {\bf K}_{Cart} (X)$ and put $D = \mathcal{D}(L)$.
Then $L\subset \mathcal{L}(D)$ and $\mathcal{L}(D)\in {\bf K}_{Cart} (X)$.
\end{Prop}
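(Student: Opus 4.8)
The plan is to verify the two assertions separately, using the fact (Theorem \ref{th-DL-well-defined}) that $\mathcal{D}(L)$ is independent of the auxiliary function used to define it, together with the defining property that $\Phi_L$ is a genuine morphism for $L\in {\bf K}_{Cart}(X)$. As remarked after Proposition \ref{prop-int-index-lin-equ}, there is no loss in assuming $X$ irreducible.

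For the inclusion $L\subset \mathcal{L}(D)$: given a non-zero $f\in L$, I would compute $D=\mathcal{D}(L)$ using the function $h=f$ itself. Then $D=\Phi_L^*(H)-(f)$, where $H=\{f=0\}$ is a hyperplane in $\p(L^*)$, and therefore $(f)+D=\Phi_L^*(H)$. Since $f\not\equiv 0$ on $X$, the image of $\Phi_L$ is not contained in $H$, so the pullback is defined; and since $\Phi_L$ is a morphism, $\Phi_L^*(H)$ is the pullback of an effective divisor and hence effective. Thus $(f)+D\geq 0$, i.e.\ $f\in\mathcal{L}(D)$. As $0\in\mathcal{L}(D)$ by convention, this gives $L\subset\mathcal{L}(D)$.

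For $\mathcal{L}(D)\in {\bf K}_{Cart}(X)$: write $M=\mathcal{L}(D)$. By Theorem \ref{th-LD-finite}, $M$ is finite dimensional, and $M\neq 0$ since it contains $L$; as $X$ is irreducible this already places $M$ in $\K$. It remains to show the Kodaira map $\Phi_M$ extends to a regular morphism on all of $X$. I would argue pointwise: fix $x_0\in X$, and choose an affine neighbourhood $U\ni x_0$ on which the Cartier divisor $D$ is cut out by a single rational function $g$. Then for every $f\in M$ one has $\mathrm{div}(gf)|_U=(\mathrm{div}(f)+D)|_U\geq 0$, so all the functions $gf$, $f\in M$, are regular at $x_0$; hence $x\mapsto (gf_1(x):\cdots:gf_e(x))$ (for a basis $f_1,\dots,f_e$ of $M$) is a candidate local representative of $\Phi_M$ near $x_0$, provided these functions do not all vanish at $x_0$. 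Since $L\subset M$, it suffices to find $h\in L$ with $(gh)(x_0)\neq 0$, and this is exactly where the hypothesis $L\in {\bf K}_{Cart}(X)$ enters. Letting $x_0$ vary then yields that $\Phi_M$ is everywhere regular, so $M\in {\bf K}_{Cart}(X)$.

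The main obstacle is this last point: showing that the sections $\{gh : h\in L\}$ have no common zero at $x_0$. I would handle it by comparing local equations: using $D=\Phi_L^*(H_{h_0})-(h_0)$ for a fixed $h_0\in L$, one sees that $gh_0$ is a local equation near $x_0$ for $\Phi_L^*(H_{h_0})$; choosing any $h_1\in L$ with $\Phi_L(x_0)\notin\{h_1=0\}$, the rational function $h_0/h_1$ pulls back to a local equation of the same divisor near $x_0$, whence $gh_1$ is a unit at $x_0$, so $(gh_1)(x_0)\neq 0$. Conceptually this is just the statement that $\mathcal{O}_X(D)\cong \Phi_L^*\mathcal{O}_{\p(L^*)}(1)$ compatibly with the sections coming from $L$, so a cleaner alternative route is to work with invertible sheaves throughout: $\mathcal{O}_X(D)\cong\Phi_L^*\mathcal{O}(1)$ is globally generated because $\Phi_L$ is a morphism and $\mathcal{O}(1)$ is globally generated by the linear forms $L$; a globally generated line bundle whose complete space of sections is $\mathcal{L}(D)$ defines precisely the morphism $\Phi_{\mathcal{L}(D)}$, giving $\mathcal{L}(D)\in {\bf K}_{Cart}(X)$ directly. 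Everything else is routine.
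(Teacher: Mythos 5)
Your proof is correct and amounts to the direct verification from the definitions that the paper treats as immediate (it states the proposition without proof, calling it a direct corollary of the definition). Both parts are handled the right way: taking $h=f$ in the definition of $\mathcal{D}(L)$ to write $(f)+D=\Phi_L^*(H_f)$, which is effective since $\Phi_L$ is a morphism, and then the local-equation comparison (that $gh_1$ is a unit at $x_0$ for a suitable $h_1\in L$ with $\Phi_L(x_0)\notin H_{h_1}$) to show $\mathcal{L}(D)$ has no base point, hence a regular Kodaira map.
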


A Cartier divisor $D$ is {\it very ample} if
the Kodaira map of the space $\mathcal{L}(D)$ gives rise to an embedding of
$X$ into the projective space. One verifies that this is equivalent to
$\mathcal{D}(\mathcal{L}(D))=D$. According to the following
well-known theorem, the group of Cartier divisors is generated by
very ample divisors (see \cite[Example 1.2.6]{Lazarsfeld}).

\begin{Th} \label{th-very-ample-div}
Let $X$ be a projective variety. Given a very
ample Cartier divisor $D$ and a Cartier divisor $E$, there is an
integer $N$ such that for any $k>N$ the divisor $E_k=kD+E$ is a very
ample divisor. Thus any Cartier divisor $E$ is the difference of two
very ample divisors, namely, $E=E_k-kD$.
\end{Th}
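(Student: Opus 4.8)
The plan is to recast the statement in terms of invertible sheaves and deduce it from Serre's global generation theorem. Since $D$ is very ample, the invertible sheaf $\mathcal{O}_X(D)$ is ample and, by Theorem \ref{th-LD-finite}, its complete linear system $\mathcal{L}(D)$ is finite dimensional and defines a closed embedding $\iota\colon X\hookrightarrow\p^m$ with $\iota^*\mathcal{O}_{\p^m}(1)\cong\mathcal{O}_X(D)$; here I may assume $X$ irreducible, as the paper has already reduced to that case. Writing $\mathcal{F}=\mathcal{O}_X(E)$ for the invertible sheaf of the Cartier divisor $E$ (invertible precisely because $E$ is locally principal, so no smoothness is needed), Serre's theorem provides an integer $N$ such that $\mathcal{F}\otimes\mathcal{O}_X(D)^{\otimes(k-1)}$ is globally generated for every $k>N$. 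For such $k$ one has
$$\mathcal{O}_X(E_k)\;\cong\;\mathcal{O}_X(D)\otimes\bigl(\mathcal{F}\otimes\mathcal{O}_X(D)^{\otimes(k-1)}\bigr),$$
which realizes $\mathcal{O}_X(E_k)$ as a tensor product of a very ample sheaf and a globally generated one.

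The remaining ingredient is the standard lemma that if $\mathcal{M}_1$ is very ample and $\mathcal{M}_2$ is globally generated, then $\mathcal{M}_1\otimes\mathcal{M}_2$ is very ample. I would prove it by taking the embedding $\phi\colon X\hookrightarrow\p^a$ given by $\mathcal{M}_1$ and the morphism $\psi\colon X\to\p^b$ given by $\mathcal{M}_2$, observing that $(\phi,\psi)\colon X\to\p^a\times\p^b$ is a closed embedding (it factors through the graph of $\psi$, which is closed since $\p^b$ is separated, followed by $\phi\times\id$), and then composing with the Segre embedding $\p^a\times\p^b\hookrightarrow\p^N$: the products $s_i s_j$, for $s_i$ and $s_j$ ranging over bases of the two spaces of sections, span a subspace of $H^0(X,\mathcal{M}_1\otimes\mathcal{M}_2)$ whose Kodaira map is exactly this closed embedding. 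Applying the lemma with $\mathcal{M}_1=\mathcal{O}_X(D)$ and $\mathcal{M}_2=\mathcal{F}\otimes\mathcal{O}_X(D)^{\otimes(k-1)}$ shows that $E_k$ is very ample for all $k>N$. The final assertion is then immediate: $E=E_k-kD$, and $kD$ is itself very ample because the $k$-th Veronese re-embedding of $\p^m$ exhibits $\mathcal{O}_X(kD)=\iota^*\mathcal{O}_{\p^m}(k)$ as very ample.

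The step I expect to require the most care is reconciling this with the paper's definition of ``very ample'', which is phrased through the \emph{complete} linear system $\mathcal{L}(D)$ and its Kodaira map $\Phi_{\mathcal{L}(D)}$, rather than through an abstract ampleness notion. The tensor-of-sections construction only produces a subspace $W\subseteq H^0(X,\mathcal{O}_X(E_k))$ whose associated map is an embedding, so one must argue that the complete linear system of a globally generated invertible sheaf gives an embedding whenever some subsystem does. This follows because the Kodaira map of the full space of sections differs from $\Phi_W$ by a linear projection, hence stays injective on points and on tangent vectors, and it is proper since $X$ is projective, so it is a closed embedding. With this bookkeeping lemma in place the rest of the argument is routine, and as an alternative one may simply invoke \cite[Example 1.2.6]{Lazarsfeld}.
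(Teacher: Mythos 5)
Your proof is correct and complete. Note, however, that the paper does not actually prove Theorem \ref{th-very-ample-div}: it states it as a well-known fact and simply cites \cite[Example 1.2.6]{Lazarsfeld}, so there is no in-paper argument to compare against. The route you take is the standard one underlying that reference: Serre global generation to make $\mathcal{O}_X(E)\otimes\mathcal{O}_X(D)^{\otimes(k-1)}$ globally generated, the Segre-embedding lemma that very ample tensor globally generated is very ample, and the Veronese re-embedding for $kD$. Your last paragraph is a worthwhile addition beyond what the citation alone supplies, since the paper's notion of very ample is phrased through the complete linear system $\mathcal{L}(D)$ and its Kodaira map; the observation that $\Phi_{\mathcal{L}(E_k)}$ factors the embedding $\Phi_W$ through a linear projection, and hence is itself a closed embedding because $X$ is complete, is exactly the bookkeeping needed to match the two formulations.
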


For a subspace $L\in {\bf K}_{Cart}(X)$ we would like to describe the
subspace $\mathcal{L}(\mathcal{D}(L))$. The answer is based on the following
theorem. It can be found in slightly different forms in
\cite[Chap. 2, Proof of Theorem 5.19]{Hartshorne} and \cite[Appendix 4]{Zariski}.

\begin{Th} \label{th-LDL}
Let $X$ be an irreducible projective variety and let $L\in
{\bf K}_{Cart} (X)$ be such that the Kodaira map $\Phi_L \to
\p(L^*)$ is an embedding. Then:
1) Every element of $\mathcal{L}(\mathcal{D}(L))$ is integral over $L$, i.e.
$\mathcal{L}(\mathcal{D}(L))\subset \overline{L},$
2) Moreover if $X$ is normal then $\mathcal{L}(\mathcal{D}(L))= \overline{L}.$
\end{Th}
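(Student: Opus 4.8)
We have an irreducible projective variety $X$ and a subspace $L \in {\bf K}_{Cart}(X)$ whose Kodaira map $\Phi_L : X \to \p(L^*)$ is an embedding. We want: (1) $\mathcal{L}(\mathcal{D}(L)) \subset \overline{L}$ always, and (2) equality when $X$ is normal. The proof of part (1) should rest on realizing $\mathcal{D}(L)$ concretely. Since $\Phi_L$ is an embedding, we may identify $X$ with its image, a subvariety of $\p^N$ where $N+1 = \dim L$; then $\mathcal{D}(L)$ is linearly equivalent to a hyperplane section, and the functions in $L$ are (up to a common rational factor) the restrictions to $X$ of the linear coordinate functions on $\p^N$. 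I would fix a basis $f_0, \ldots, f_N$ of $L$; by the definition of $\mathcal{D}(L)$ via a chosen $h = f_0 \in L$, we have $\mathcal{D}(L) = \Phi_L^*(H_0) - (f_0)$ where $H_0 = \{h = 0\}$. Then for $g \in \mathcal{L}(\mathcal{D}(L))$, the divisor $(g) + \Phi_L^*(H_0) - (f_0)$ is effective, i.e. $(g/f_0) + \Phi_L^*(H_0) \geq 0$, which says the rational function $g/f_0$, pulled to $\p^N$-coordinates, has poles bounded by the hyperplane $H_0$.

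**Part (1): integrality.** The plan is to show directly that any $g \in \mathcal{L}(\mathcal{D}(L))$ satisfies a monic equation $g^m + a_1 g^{m-1} + \cdots + a_m = 0$ with $a_i \in L^i$. Working in the projective coordinates, $g/f_0$ has a pole of order at most $1$ along the hyperplane $\{f_0 = 0\}$ and is regular elsewhere on the affine chart; on each of the standard affine charts $\{f_i \neq 0\}$ of $\p^N$ intersected with $X$, the function $g \cdot f_i / f_0$ — wait, more carefully: $g/f_0$ restricted to the chart $f_j \neq 0$ is a regular function on $X \cap \{f_j \neq 0\}$ having at worst a simple pole along $f_0 = 0$, so $g f_0 / f_j^2$ — the cleanest route is: $g$ lies in $H^0(X, \mathcal{O}_X(D))$ for the invertible sheaf $\mathcal{O}_X(D)$ with $D = \mathcal{D}(L)$, which is $\Phi_L^* \mathcal{O}_{\p^N}(1)$, so $g$ corresponds to a global section, and the $f_i$ form the subspace $L \subset H^0(X, \mathcal{O}_X(D))$ generating $\mathcal{O}_X(D)$ (no base locus, since $\Phi_L$ is a morphism). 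The key claim is then that $H^0(X, \mathcal{O}_X(D))$ — that is, $\mathcal{L}(D)$ — is integral over the base-point-free subspace $L$. I would prove this by the standard Noether-normalization / finiteness-of-projection argument: projecting $X \subset \p^N$ to a linear $\p^n$ ($n = \dim X$) from a generic center disjoint from $X$ gives a finite morphism $\pi : X \to \p^n$, and the coordinate ring of $X$ in the affine cone picture is a finitely generated module over the subring generated by $L$; hence every element of $\mathcal{L}(D) = \bigoplus_k H^0(\mathcal{O}(kD))$ graded piece in degree $1$ is integral over the subring $\c[L]$, and a homogeneous integral element of degree $1$ satisfies a monic equation with coefficients that can be taken homogeneous of the appropriate degrees, i.e. in $L^i$. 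This gives $g \in \overline{L}$.

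**Part (2): equality under normality.** For the reverse inclusion $\overline{L} \subset \mathcal{L}(\mathcal{D}(L))$ when $X$ is normal, take $g$ integral over $L$: $g^m + a_1 g^{m-1} + \cdots + a_m = 0$ with $a_i \in L^i$. I want to show $(g) + \mathcal{D}(L) \geq 0$, i.e. $(g/f_0) + \Phi_L^*(H_0) \geq 0$, i.e. $g/f_0$ has no pole outside the support of $\Phi_L^*(H_0)$ and at worst a simple pole there. Dividing the integral equation by $f_0^m$ and setting $u = g/f_0$, $b_i = a_i/f_0^i \in L^i/f_0^i$, we get $u^m + b_1 u^{m-1} + \cdots + b_m = 0$ where each $b_i$ is a ratio of a section of $\mathcal{O}(iD)$ by $f_0^i$, hence has at worst an $i$-fold pole along $H_0$ and is regular away from $H_0$. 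So $u$ is integral over the ring of functions regular on $X \setminus \mathrm{supp}(\Phi_L^* H_0)$; since $X$ is normal, this ring is integrally closed, so $u$ is regular there — that handles everything away from $H_0$. For the order of pole of $u$ along an irreducible component $Z$ of $\Phi_L^* H_0$: valuation-theoretically, $\ord_Z(b_i) \geq -i$, and the integral equation forces $\ord_Z(u) \geq \min_i \ord_Z(b_i)/i \cdot$ — the standard valuation argument (as in Proposition 5.3 of the excerpt, applied to the discrete valuation $\ord_Z$) gives $\ord_Z(u) \geq -1$. Hence $(u) + \Phi_L^* H_0 \geq 0$, so $g \in \mathcal{L}(\mathcal{D}(L))$.

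**Main obstacle.** I expect the hardest part to be part (1) — specifically, converting the geometric statement "$g$ has poles bounded by a hyperplane section of the embedded $X$" into an actual monic polynomial relation $g^m + a_1 g^{m-1} + \cdots + a_m = 0$ with $a_i \in L^i$, with genuine control that the coefficients land in the right powers of $L$ rather than just in the field $\c(X)$. This is where finiteness of a generic linear projection and a careful bookkeeping of gradings in the homogeneous coordinate ring of $\Phi_L(X) \subset \p(L^*)$ are needed; the normality in part (2) is used only to upgrade "integral over a ring of regular functions" to "regular", which is comparatively routine. (The excerpt itself cites \cite[Chap. 2, Proof of Theorem 5.19]{Hartshorne} and \cite[Appendix 4]{Zariski} for these two ingredients, so one can lean on those rather than reprove the finiteness-of-projection and integral-closedness facts from scratch.)
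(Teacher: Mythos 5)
A preliminary remark: the paper does not prove this theorem -- it only cites \cite[Chap.\ 2, Proof of Theorem 5.19]{Hartshorne} and \cite[Appendix 4]{Zariski} -- so there is no in-paper argument to compare against; I will assess your sketch on its own terms.

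Your part (1) has a genuine gap in the key step. The sentence ``the coordinate ring of $X$ in the affine cone picture is a finitely generated module over the subring generated by $L$'' is a tautology: the homogeneous coordinate ring $S(X)$ of $\Phi_L(X)\subset\p(L^*)$ \emph{is} the $\c$-algebra generated by $L$, and every ring is trivially finite over itself. This statement therefore cannot yield what you deduce from it, namely that $\mathcal{L}(D)$ is integral over $\c[L]$. The generic projection $\pi\colon X\to\p^n$ gives Noether normalization of $S(X)$ over a polynomial ring in $n+1$ variables, which is true but irrelevant to the claim. What you actually need is that the section ring $S'=\bigoplus_{k\geq 0}\Gamma(X,\mathcal{O}_X(kD))$ is a finite $S(X)$-module, and this is a real theorem. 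Staying in the affine-cone language you invoke, the argument is: for $\dim X\geq 1$ the vertex $\{0\}$ has codimension $\geq 2$ in $\hat X$; identify $S'$ with $\Gamma(\hat X\setminus\{0\},\mathcal{O})$; pull any such regular function back along the normalization $\nu\colon \widetilde{\hat X}\to\hat X$ and use normality of $\widetilde{\hat X}$ together with $\mathrm{codim}\,\nu^{-1}(0)\geq 2$ to extend over the fiber, giving $S'\subset\Gamma(\widetilde{\hat X},\mathcal{O})=\tilde S\subset\overline{S(X)}$; finiteness of normalization then gives the module finiteness. (Alternatively, use the coherent-sheaf finiteness $\Gamma_*(\mathcal{O}_X)$ over $S(X)$ from Hartshorne II.5.19 directly.) Once integrality of $S'$ over $S(X)$ is established, your graded bookkeeping -- a homogeneous degree-$1$ integral element satisfies a homogeneous monic equation with coefficient of $g^{m-i}$ lying in $S(X)_i\cong L^i$ -- is correct and finishes part (1).

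Your part (2) is essentially right, but the pole-order bounds are stated too strongly. You claim $\ord_Z(b_i)\geq -i$ and conclude $\ord_Z(u)\geq -1$; this only holds if $\Phi_L^*H_0$ is reduced along $Z$. In general, writing $m_Z=\ord_Z(\Phi_L^*H_0)$, one has $\ord_Z(f_j/f_0)\geq -m_Z$ for all $j$, hence $\ord_Z(b_i)\geq -i\,m_Z$, and the valuation argument (as in Proposition~\ref{5.3}) gives $\ord_Z(u)\geq -m_Z$ -- which is exactly what is needed to conclude $(u)+\Phi_L^*H_0\geq 0$, so the proof still closes. Alternatively, since by Theorem~\ref{th-DL-well-defined} the divisor $\mathcal{D}(L)$ is independent of the choice of $h\in L$, you may choose $f_0$ generic so that $\Phi_L^*H_0$ is reduced and your $-1$ bound is literally correct. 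The use of normality -- upgrading ``$u$ integral over $\mathcal{O}(U_0)$'' to ``$u$ regular on $U_0$'' -- is the right place where the hypothesis enters.
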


Let us use Theorem \ref{th-LDL} to describe
$\mathcal{L}(\mathcal{D}(L))$ in terms of the semigroup
${\bf K}_{Cart}(X)$. For $L,M \in {\bf K}_{Cart}(X)$ we will write $L\sim_{Cart}
M$ if there is $N\in {\bf K}_{Cart}(X)$ with $LN=MN$.

\begin{Cor} \label{cor-LDL}
Let $X$ be an irreducible projective variety and $L \in {\bf K}_{Cart}(X)$.
Also assume that $\Phi _L:X \to \p(L^*)$ is an
embedding. Then:
1) $ \mathcal{L}(\mathcal{D}(L))\sim _{Cart} L$,
2) if $M\sim _{Cart} L$ then $M\subset \mathcal{L}(\mathcal{D}(L)).$
3) finally, let $L_1,L_2\in {\bf K}_{Cart}(X)$ and assume that $\Phi_{L_1}$,
$\Phi_{L_2}$ are embeddings. Then $L_1\sim _{Cart}L_2$ if and only
if $\mathcal{L}(\mathcal{D}(L_1))= \mathcal{L}(\mathcal{D}(L_2))$.
\end{Cor}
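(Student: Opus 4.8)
The strategy is to prove part 2) first, since it is essentially formal, then part 1), which is the substantive step, and then deduce part 3) from the two together with the transitivity of $\sim_{Cart}$. For part 2), suppose $M \sim_{Cart} L$, i.e.\ $MN = LN$ for some $N \in {\bf K}_{Cart}(X)$. Applying the homomorphism $\mathcal{D}\colon {\bf K}_{Cart}(X) \to \Div(X)$ of Theorem~\ref{th-DL-well-defined} and cancelling $\mathcal{D}(N)$ in the group $\Div(X)$ gives $\mathcal{D}(M) = \mathcal{D}(L)$; and since $M \subseteq \mathcal{L}(\mathcal{D}(M))$ for every $M \in {\bf K}_{Cart}(X)$ (the Proposition asserting $L \subseteq \mathcal{L}(\mathcal{D}(L))$ and $\mathcal{L}(\mathcal{D}(L)) \in {\bf K}_{Cart}(X)$, stated just before Theorem~\ref{th-very-ample-div}), we conclude $M \subseteq \mathcal{L}(\mathcal{D}(M)) = \mathcal{L}(\mathcal{D}(L))$. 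Note this argument uses neither the embedding hypothesis nor normality.

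For part 1), I would first sandwich $\mathcal{L}(\mathcal{D}(L))$: by the same Proposition $L \subseteq \mathcal{L}(\mathcal{D}(L))$, while Theorem~\ref{th-LDL}(1) --- whose hypothesis that $\Phi_L$ be an embedding is exactly the one assumed here --- gives $\mathcal{L}(\mathcal{D}(L)) \subseteq \overline{L}$; and $\mathcal{L}(\mathcal{D}(L))$ is finite-dimensional and lies in ${\bf K}_{Cart}(X)$ by Theorem~\ref{th-LD-finite} and the Proposition. Writing $L' = \mathcal{L}(\mathcal{D}(L))$, we thus have $L \subseteq L' \subseteq \overline{L}$ with $L'$ finite-dimensional. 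The key fact is that $L$ is then a \emph{reduction} of $L'$: because every element of the finite-dimensional space $L'$ is integral over $L$, one has $L\,(L')^{k} = (L')^{k+1}$ for all $k \gg 0$ (the reduction lemma for integral closures of subspaces, cf.\ \cite[Appendix 4]{Zariski}). Since $(L')^{k} \in {\bf K}_{Cart}(X)$, being a power of $L' \in {\bf K}_{Cart}(X)$, the identity $L\,(L')^{k} = L'\,(L')^{k}$ exhibits $L \sim_{Cart} L'$, which is part 1).

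Finally, for part 3): the forward implication is the computation used in part 2) applied to $L_1, L_2$, giving $\mathcal{D}(L_1) = \mathcal{D}(L_2)$ and hence $\mathcal{L}(\mathcal{D}(L_1)) = \mathcal{L}(\mathcal{D}(L_2))$; the converse follows from part 1), since $L_1 \sim_{Cart} \mathcal{L}(\mathcal{D}(L_1)) = \mathcal{L}(\mathcal{D}(L_2)) \sim_{Cart} L_2$, using that $\sim_{Cart}$ is transitive. The main obstacle is the single point in part 1) where $\sim_{rat}$ must be upgraded to $\sim_{Cart}$: Theorem~\ref{th-completion} gives $L \sim_{rat} L'$ almost for free, since $\overline{L'} = \overline{L}$, but producing a witnessing subspace that actually lies in ${\bf K}_{Cart}(X)$ rather than merely in ${\bf K}_{rat}(X)$ is what forces the use of the reduction lemma with the witness taken to be a power of $L'$ itself.
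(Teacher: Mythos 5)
Your proposal is correct; parts 2) and 3) essentially coincide with the paper's arguments, but part 1) takes a genuinely different route. The paper absorbs the basis vectors of $L'=\mathcal{L}(\mathcal{D}(L))$ into $L$ one at a time: for each $f$ with an integral dependence $f^m+a_1f^{m-1}+\dots+a_m=0$, $a_i\in L^i$, it constructs the explicit witness $P=f^{m-1}L+f^{m-2}L^2+\dots+L^m$, checks $fP\subset LP$ so that $(L+\c f)P=LP$, verifies directly that $P$ and $L+\c f$ lie in ${\bf K}_{Cart}(X)$, and then iterates until a basis of $L'$ has been exhausted. You instead invoke the reduction lemma all at once: module-finiteness of the Rees algebra $\bigoplus_k (L')^k t^k$ over $\bigoplus_k L^k t^k$ (each $f\in L'$ being integral over $L$ and $L'$ finite-dimensional) yields $L(L')^k=(L')^{k+1}$ for $k\gg 0$, so $(L')^k$ is a witness whose membership in ${\bf K}_{Cart}(X)$ is automatic, being a power of $L'\in{\bf K}_{Cart}(X)$. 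Your route is shorter and sidesteps the element-by-element bookkeeping (including the step the paper labels ``easy to see,'' that $P$ and $L+\c f$ belong to ${\bf K}_{Cart}(X)$ with embedding Kodaira maps), at the cost of citing a Rees-algebra/Noetherianity fact that the paper only uses implicitly through \cite[Appendix 4]{Zariski} in the proof of Theorem~\ref{th-completion}; and you have correctly identified the crux, namely that Theorem~\ref{th-completion} alone only yields $L\sim_{rat}L'$, so one must manufacture a witness inside ${\bf K}_{Cart}(X)$, which $(L')^k$ achieves cleanly. Both approaches are valid.
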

\begin{proof}
1) Let $f\in \mathcal{L}(\mathcal{D}(L)) $ By theorem 6.19 we know
that $f$ is integral over $L$. So, there are elements $a_i\in L^i$
such that $f^m+a_1f^{m-1}+\dots+ a_m=0$. Denote by $L(f)$ the vector
space spanned by $L$ and $f$. Denote by $P$ a space
$P=f^{m-1}L+\dots+ L^m$. It is easy to see that: a) the spaces
$L(f)$ and $P$ belong to ${\bf K}_{Cart}(X)$, b) the map $\Phi_{L(f)}:X\to
PF^*$ is am embedding, c) the  inclusion $fP\subset LP$ holds. From
c) we have $LP=L(f)P$. Because $P, L(f)\in {\bf K}_{Cart}(X)$  we showed
that $L\sim _{Cart}L(f)$. Now fix basis $f_1,\dots,f_n$ for
$\mathcal{L}(D)$. Denote by $L_1$ the space $L(f_1)$. As we proved
$L_1\sim _{Cart} L$  and the Kodaira map $\Phi _{L_1}$ is an
embedding. Applying the same arguments to $L_2=L_1(f_2)$  we get
that $L_2\sim_{Cart}L_1\sim _{Cart}L$ and $\Phi_{L_2}$ is an
embedding. Continuing this process we see that $L\sim
_{Cart}\mathcal{L}(\mathcal{D}(L))$. 2) If $M\sim_{Cart}L$ then
$\mathcal{D}(M)=\mathcal{D}(L)$ (see Theorem
\ref{th-DL-well-defined})). Since $M\subset L (\mathcal{D}(M))$ we
have that $M \subset \mathcal{L}(\mathcal{D}(L))$. Part 3) of the
corollary follows from 1) and 2).
\end{proof}

The last statement in the corollary shows that the spaces $L$ for
which the Kodaira map is an embedding behave especially good
with respect to the map $L\to \mathcal{D}(L)$. One easily
verifies the following.

\begin{Lem} \label{lem-Kodaira-embed}
Let $P\in {\bf K}_{Cart}(X)$ be such that the Kodaira
map $\Phi_P$ is an embedding. 1) if $M\in {\bf K}_{Cart}(X)$ and
$P\subset M$ or 2) if $L\in {\bf K}_{Cart}(X)$ and $M=PL$, then
$\Phi _M$ is also an embedding.
\end{Lem}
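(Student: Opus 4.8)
The plan is to exhibit $\Phi_M$ as a composite of embeddings (in part (2)) or as a morphism possessing a morphism left inverse (in part (1)); in either case $\Phi_M$ is then an embedding, by the elementary facts that a composite of embeddings is an embedding and that a morphism of varieties with a morphism left inverse is a (locally closed) embedding.

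For part (2), where $M=PL$, the relevant tool is the Segre embedding. The multiplication map $\pi\colon P\otimes L\twoheadrightarrow PL$ is surjective, so dually $(PL)^*\hookrightarrow P^*\otimes L^*$, which realises $\p(M^*)=\p((PL)^*)$ as a linear subvariety of $\p(P^*\otimes L^*)$. For $x\in X$ pick rational functions $g_P$, $g_L$ clearing the poles of $P$, resp.\ of $L$, near $x$; then $g_Pg_L$ clears the poles of $PL$, the tensor $\ell^P_x\otimes\ell^L_x$ of the two pole-cleared evaluation functionals annihilates $\ker\pi$ (a relation $\sum_i f_ig_i=0$ forces $\sum_i(g_Pf_i)(x)(g_Lg_i)(x)=0$) and is non-zero since each factor is, so it descends, up to a non-zero scalar, to the evaluation functional defining $\Phi_M(x)$. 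Hence $\Phi_M$ is the composite
$$X\xrightarrow{\ (\Phi_P,\Phi_L)\ }\p(P^*)\times\p(L^*)\ \hookrightarrow\ \p(P^*\otimes L^*)$$
(the last map being the Segre embedding), with image inside $\p(M^*)$. Since $\Phi_P$ is an embedding, $(\Phi_P,\Phi_L)$ is an embedding — it is the graph morphism $(\textup{id},\Phi_L)\colon X\to X\times\p(L^*)$, a closed immersion because $\p(L^*)$ is separated, followed by $\Phi_P\times\textup{id}$ — and composing with the Segre embedding shows $\Phi_M$ is an embedding.

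For part (1), where $P\subset M$, the inclusion induces a surjection $M^*\twoheadrightarrow P^*$ and hence a linear projection $\pi\colon\p(M^*)\ratmap\p(P^*)$ whose centre is the linear subvariety $\Lambda=\p(\textup{Ann}\,P)$. I would first show that $\Phi_M(X)\cap\Lambda=\emptyset$; granting this, $\pi$ restricts to a morphism along $\Phi_M(X)$, one has $\pi\circ\Phi_M=\Phi_P$, and $(\Phi_P)^{-1}\circ\pi$ — a morphism on the locus of $\p(M^*)\setminus\Lambda$ lying above $\Phi_P(X)$, which contains $\Phi_M(X)$ — is a left inverse of $\Phi_M$, so $\Phi_M$ is an embedding. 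The disjointness $\Phi_M(X)\cap\Lambda=\emptyset$ amounts to showing that for every $x\in X$ the pole-cleared evaluation functional defining $\Phi_M(x)$ does not vanish identically on $P$: working in the local ring $\mathcal O_{X,x}$ one finds $\ell^M_x|_P=(g_M/g_P)(x)\cdot\ell^P_x$ with $g_M/g_P\in\mathcal O_{X,x}$, where $g_M$, $g_P$ clear the poles of $M$, resp.\ of $P$, near $x$; so the issue is to see that $g_M/g_P$ is a unit at $x$, i.e.\ that enlarging $P$ to $M$ does not force the pole-clearing function at $x$ to vanish further.

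I expect this disjointness claim in part (1) to be the main obstacle. It is the one step where one must really use that \emph{both} $P$ and $M$ lie in ${\bf K}_{Cart}(X)$ — so that the Kodaira maps are everywhere regular and the local pole-clearing functions of $P$ and of $M$ can be compared inside $\mathcal O_{X,x}$ — and, plausibly, where the hypothesis that $\Phi_P$ is an embedding enters; everything else (the descent of the tensor functional, the Segre and projection identities, and the passage from a split monomorphism of varieties to a locally closed immersion) is formal.
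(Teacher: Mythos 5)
Your argument for part (2) is correct and is the natural one: the pointwise identification of $\Phi_{PL}(x)$ with the class of $\ell^P_x\otimes\ell^L_x$ under the linear inclusion $\p((PL)^*)\hookrightarrow\p(P^*\otimes L^*)$ is exactly right, and factoring $\Phi_{PL}$ as the graph morphism $(\id,\Phi_L)$ (a closed immersion since $\p(L^*)$ is separated), followed by $\Phi_P\times\id$ and the Segre embedding, settles it.

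For part (1) you have correctly isolated the crucial step, the disjointness claim $\Phi_M(X)\cap\Lambda=\emptyset$, and you are right to be uneasy about it: that claim is in fact false, and so is part (1) as literally stated. Take $X=\p^1$, $P=\langle 1,z\rangle$, and $M=\langle 1,z,f\rangle$ with $f=1/((z-1)(z-2))$. Then $\Phi_P$ is the identity embedding of $\p^1$, while $\Phi_M$ is given in homogeneous form by $z\mapsto\bigl((z-1)(z-2):z(z-1)(z-2):1\bigr)$, which is regular everywhere on $\p^1$ (so $M\in{\bf K}_{Cart}(X)$) but sends both $z=1$ and $z=2$ to $(0:0:1)$. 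That common image point is precisely the centre $\Lambda=\p(\textup{Ann}\,P)$ of your projection, so $\Phi_M(X)$ does meet $\Lambda$, and worse, $\Phi_M$ is not even injective. The source of the trouble is exactly where you suspected: the local pole-clearing function for $M$ can vanish to higher order than that for $P$ (here $g_M=z-1$ near $z=1$ while $g_P=1$), so $g_M/g_P$ need not be a unit in $\mathcal{O}_{X,x}$. The statement and your projection argument do become correct under the extra hypothesis that the poles of $M$ are bounded by those of $P$, i.e.\ $M\subset\mathcal{L}(\mathcal{D}(P))$; then the pole-clearing functions for $P$ and $M$ can be taken equal, giving $\ell^M_x|_P=\ell^P_x\ne 0$ for every $x$, and the left-inverse argument goes through. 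That is also the hypothesis that in fact holds in the one place such a claim is used in the paper, in the proof of Corollary \ref{cor-LDL}, where $M=L(f)$ with $f\in\mathcal{L}(\mathcal{D}(L))$ integral over $L=P$.
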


Let $X$ be a projective variety. Consider the group homomorphism
$\rho$ from the Grothendieck group ${\bf G}_{Cart}(X))$ to the group $\Div(X)$ (see Theorem \ref{th-DL-well-defined}).

\begin{Th} \label{th-rho-iso}
For an irreducible projective variety $X$ the homomorphism
$$\rho: {\bf G}_{Cart}(X) \to \Div(X),$$ is an isomorphism which
preserves the intersection index.
\end{Th}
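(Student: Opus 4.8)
The plan is to show that $\rho: {\bf G}_{Cart}(X) \to \Div(X)$ is an isomorphism preserving the intersection index by exhibiting an explicit inverse on the level of Grothendieck groups, built from the correspondence $D \mapsto \mathcal{L}(D)$. First I would observe that the intersection-index-preserving property is already contained in Theorem \ref{th-DL-well-defined} part 4) together with the multi-linearity of both intersection indices, so once $\rho$ is shown to be an isomorphism of groups, nothing more is needed on that front. So the real content is bijectivity.

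For surjectivity, I would use Theorem \ref{th-very-ample-div}: every Cartier divisor $E$ is a difference $E_k - kD$ of two very ample divisors. For a very ample divisor $D'$ we have $\mathcal{D}(\mathcal{L}(D')) = D'$ and $\mathcal{L}(D') \in {\bf K}_{Cart}(X)$ (by the remarks preceding Theorem \ref{th-very-ample-div} and the Proposition that $\mathcal{L}(D) \in {\bf K}_{Cart}(X)$). Hence $\rho(\mathcal{L}(E_k)/\mathcal{L}(kD)) = E_k - kD = E$, giving surjectivity. For injectivity, suppose $\rho(L_1/L_2) = 0$, i.e. $\mathcal{D}(L_1) = \mathcal{D}(L_2)$ in $\Div(X)$. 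I want to conclude that $L_1$ and $L_2$ are equivalent in the semigroup ${\bf K}_{Cart}(X)$, i.e. $L_1 \sim_{Cart} L_2$, so that $L_1/L_2$ is the identity in ${\bf G}_{Cart}(X)$. The tool here is Corollary \ref{cor-LDL} part 3), but that corollary is stated under the hypothesis that the Kodaira maps $\Phi_{L_1}, \Phi_{L_2}$ are embeddings, which need not hold for arbitrary $L_i \in {\bf K}_{Cart}(X)$. The standard trick is to tensor with a fixed subspace $P$ whose Kodaira map is an embedding (such $P$ exists: take $\mathcal{L}(D')$ for $D'$ very ample): by Lemma \ref{lem-Kodaira-embed}, $\Phi_{PL_1}$ and $\Phi_{PL_2}$ are then embeddings, and $\mathcal{D}(PL_i) = \mathcal{D}(P) + \mathcal{D}(L_i)$ by the homomorphism property in Theorem \ref{th-DL-well-defined}, so $\mathcal{D}(PL_1) = \mathcal{D}(PL_2)$; hence by Corollary \ref{cor-LDL}(3), $PL_1 \sim_{Cart} PL_2$, and therefore $L_1/L_2 = PL_1/PL_2$ is trivial in the Grothendieck group.

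The step I expect to be the main obstacle is checking that injectivity argument carefully: one must verify that $PL_1 \sim_{Cart} PL_2$ really does force $L_1/L_2$ to be the identity in ${\bf G}_{Cart}(X)$ — this is exactly the definition of the Grothendieck group of a semigroup without cancellation, so it is formal, but it needs the relation $\sim_{Cart}$ to be the \emph{same} equivalence as the one used in forming ${\bf G}_{Cart}(X)$, which it is by construction. A secondary point to nail down is that $P$ with $\Phi_P$ an embedding lies in ${\bf K}_{Cart}(X)$ and that the product $PL_i$ lands in ${\bf K}_{Cart}(X)$; both are covered by the closure of ${\bf K}_{Cart}(X)$ under multiplication and by the Proposition giving $\mathcal{L}(D) \in {\bf K}_{Cart}(X)$. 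Finally I would remark that irreducibility and projectivity of $X$ are used through Theorems \ref{th-LD-finite}, \ref{th-LDL} and Corollary \ref{cor-LDL}, so the hypotheses of the theorem are exactly what is needed.
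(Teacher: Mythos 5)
Your proposal is correct and takes essentially the same route as the paper: surjectivity via Theorem \ref{th-very-ample-div}, injectivity by reducing to subspaces with embedding Kodaira maps via Lemma \ref{lem-Kodaira-embed} and Corollary \ref{cor-LDL}(3), and the intersection-index statement from Theorem \ref{th-DL-well-defined}. Your write-up is somewhat more explicit in the injectivity step (spelling out the multiplication by a fixed $P$ with $\Phi_P$ an embedding and checking that $PL_1\sim_{Cart} PL_2$ forces $L_1\sim_{Cart} L_2$), where the paper only remarks ``From this and Lemma \ref{lem-Kodaira-embed} one can see that $\rho$ has no kernel.''
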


\begin{proof}
By Theorem \ref{th-DL-well-defined} we know that $\rho$ is a
homomorphism which preserves the intersection index.  The group
$\Div(X)$ is generated by very ample divisors which belong to the
image of ${\bf K}_{Cart}(X)$ under the map $\rho$ (see Theorem
\ref{th-very-ample-div}). So the homomorphism $\rho $ is onto. Take
two spaces $L_1$ and $L_2$ from ${\bf K}_{Cart}(X)$ for which the Kodaira maps
$\Phi_{L_1}$, $\Phi_{L_2}$ are embeddings. According to the last
statement in Corollary \ref{cor-LDL} the map $\rho (L_1)=\rho(L_2)$
if and only if the subspaces $L_1$ and $L_2$ define the same element in the
Grothendieck group ${\bf G}_{Cart}(X)$. From this and
Lemma \ref{lem-Kodaira-embed} one can see that $\rho$ has no kernel.
\end{proof}

Recall that for two subspaces $L$, $M$ we write $L \sim_{lin} M$ if there
is a rational function $f$ which is not identically equal to zero on
any irreducible component of $X$ with $L=fM$.

\begin{Cor}
For an irreducible projective variety $X$ the isomorphism
$\rho$  induces the isomorphism $$\tilde{\rho}: {\bf G}_{Cart}(X)/\sim_{lin}\to \Pic(X),$$
which preserves the intersection index.
\end{Cor}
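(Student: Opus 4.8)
The plan is to descend the isomorphism $\rho\colon {\bf G}_{Cart}(X)\to \Div(X)$ of Theorem \ref{th-rho-iso} to the appropriate quotients. Recall that $\Pic(X)=\Div(X)/\!\sim_{lin}$, where two Cartier divisors are linearly equivalent exactly when their difference is principal. On the other side, by the evident analogue of Proposition \ref{prop-int-index-lin-equ}(1) (with ${\bf K}_{Cart}(X)$ in place of $\K$) the relation $\sim_{lin}$ is a congruence on ${\bf K}_{Cart}(X)$, hence on ${\bf G}_{Cart}(X)$, so that ${\bf G}_{Cart}(X)/\!\sim_{lin}$ is a group. So I would first check that $\rho$ carries $\sim_{lin}$ exactly onto linear equivalence of divisors, then deduce that the induced map $\tilde\rho$ is an isomorphism, and finally observe that it preserves the intersection index.

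The one computation that is needed is the behaviour of $\mathcal{D}$ under multiplication by a rational function. Fix $L\in {\bf K}_{Cart}(X)$ and a rational function $f$ not identically zero on any irreducible component of $X$; recall that $fL\in {\bf K}_{Cart}(X)$ and $\Phi_{fL}=\Phi_L$. If $\{g_i\}$ is a basis of $L$, then $\{fg_i\}$ is a basis of $fL$, and under the resulting identification $\p((fL)^*)\cong \p(L^*)$ the hyperplane $\{fh=0\}$ attached to $fh\in fL$ corresponds to the hyperplane $\{h=0\}$ attached to $h\in L$; hence $\Phi_{fL}^*(\{fh=0\})=\Phi_L^*(\{h=0\})$, and from the definition of $\mathcal{D}$,
\[
\mathcal{D}(fL)=\Phi_{fL}^*(\{fh=0\})-(fh)=\Phi_L^*(\{h=0\})-(h)-(f)=\mathcal{D}(L)-(f).
\]
In particular $L\sim_{lin}M$ forces $\mathcal{D}(L)-\mathcal{D}(M)$ to be principal. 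Extending over the Grothendieck group, $\rho$ sends $\sim_{lin}$-equivalent elements of ${\bf G}_{Cart}(X)$ to linearly equivalent divisors, so it descends to a homomorphism $\tilde\rho\colon {\bf G}_{Cart}(X)/\!\sim_{lin}\to \Pic(X)$ which automatically preserves the intersection index, since the index on $\Div(X)$ is $\sim_{lin}$-invariant and descends to $\Pic(X)$, the index on ${\bf G}_{Cart}(X)$ descends to ${\bf G}_{Cart}(X)/\!\sim_{lin}$ for the same reason, and $\rho$ already matches them by Theorem \ref{th-rho-iso}.

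It remains to see that $\tilde\rho$ is bijective. Surjectivity is immediate because $\rho$ is already onto $\Div(X)$. For injectivity, take $\alpha\in {\bf G}_{Cart}(X)$, write $\alpha=\phi(L_1)/\phi(L_2)$ with $L_1,L_2\in {\bf K}_{Cart}(X)$, and suppose $\tilde\rho$ kills its class, i.e. $\rho(\alpha)=\mathcal{D}(L_1)-\mathcal{D}(L_2)=(f)$ is principal. Then $\mathcal{D}(L_1)=\mathcal{D}(L_2)+(f)=\mathcal{D}(f^{-1}L_2)$ by the displayed formula, so $\rho(\phi(L_1))=\rho(\phi(f^{-1}L_2))$ in $\Div(X)$; since $\rho$ is injective (Theorem \ref{th-rho-iso}), $\phi(L_1)=\phi(f^{-1}L_2)$ in ${\bf G}_{Cart}(X)$, whence $\alpha=\phi(f^{-1}L_2)/\phi(L_2)$. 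As $f^{-1}L_2\sim_{lin}L_2$, the class of $\alpha$ in ${\bf G}_{Cart}(X)/\!\sim_{lin}$ is trivial, so $\tilde\rho$ is an isomorphism. The only mildly delicate point in all of this is making the identification $\p((fL)^*)\cong\p(L^*)$ precise enough to justify $\mathcal{D}(fL)=\mathcal{D}(L)-(f)$, together with the bookkeeping that $\sim_{lin}$ passes to the Grothendieck group and that $\rho$ respects it in both directions; everything else is a formal consequence of Theorem \ref{th-rho-iso}.
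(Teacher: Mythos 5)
Your proof is correct and spells out, in the natural way, what the paper treats as a direct consequence of Theorem \ref{th-rho-iso}; the paper itself gives no separate argument for this corollary, so there is no competing approach to contrast with.

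The one ingredient that does need to be verified explicitly is the formula $\mathcal{D}(fL)=\mathcal{D}(L)-(f)$, and you handle it correctly: the isomorphism $L\to fL$, $g\mapsto fg$, dualizes to an identification $\p((fL)^*)\cong\p(L^*)$ under which $\Phi_{fL}$ and $\Phi_L$ coincide and the hyperplane $\{fh=0\}$ matches $\{h=0\}$, from which the formula follows from the defining expression $\mathcal{D}(L)=\Phi_L^*(H)-(h)$. This gives the forward implication (that $\rho$ descends). The part that actually requires an argument beyond what Theorem \ref{th-DL-well-defined}(2) already records — namely, the converse direction that $\rho(\alpha)$ principal forces $\alpha$ to be $\sim_{lin}$-trivial — you obtain cleanly from the injectivity of $\rho$ together with the same formula. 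The bookkeeping (that $\sim_{lin}$ is a congruence on ${\bf K}_{Cart}(X)$ and hence passes to ${\bf G}_{Cart}(X)$, mirroring Proposition \ref{prop-int-index-lin-equ}(1), and that the intersection index already descends on both sides) is also in order. Nothing is missing.
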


\begin{Rem}
Theorem \ref{th-rho-iso} shows that for a projective variety $X$ the
Grothendieck group of the semigroup ${\bf K}_{Cart}(X)$ can naturally be
identified with the group $\Div(X)$ of Cartier divisors. From this
one can show that the Grothendieck group ${\bf G}_{rat}(X)$, of the semigroup $\K$, can naturally be
identified with the direct limit of the  groups $\Div(Y)$ of Cartier
divisors on projective birational models $Y$ of $X$.
\end{Rem}

\begin{Rem}
If $X$ is not only projective  but also normal then
using Theorem \ref{th-LDL} one proves that: a) ${\bf K}_{Cart}(X)$ is closed
under completion, i.e. if $L\in {\bf K}_{Cart}(X)$ then $\overline{L} \in
{\bf K}_{Cart}(X)$ and moreover $L\sim_{Cart} \overline{L}$. b) Let $L,M\in
{\bf K}_{Cart}(X)$. Then $L\sim _{rat} M$ if and only if $L\sim _{Cart}
M$.
\end{Rem}

\section{Topological and algebro-geometric proofs of properties of intersection index}
\label{sec-appendix} In this section we provide alternative proofs for the
main properties of the birationally invariant intersection index. The proofs are essentially based
on differential geometry techniques. These techniques
fit into the context of differential geometry in a specially nice
way when the variety $X$ is smooth and compact (or more generally if
it is normal and complete) and when the subspaces of rational
functions $L_i \in \K$, appearing in the intersection
index, give rise to regular Kodaira maps $\Phi_{L_i}: X \to
\p(L_i^*)$. But as we will see the topological arguments are also
applicable in the general case. Note that a similar
construction allows one to prove the properties of the
intersection index (for subspaces of rational functions) using
standard intersection theory for Cartier divisors on a complete
variety.

We also provide a proof of the Hodge inequality for
the birationally invariant index, which
relies on Hodge theory and other well-known methods of algebraic
geometry. In particular, the proof of algebraic analogue of
the Alexandrov-Fenchel inequality mostly follows its proof in
\cite{Burago-Zalgaller}. (As mentioned in the introduction we
recently found an elementary and unusual proof of this inequality;
see \cite{Askold-Kiumars-arXiv} for a preliminary version.)

We first recall the necessary statements from complex geometry.

All smooth complex varieties have standard orientation coming
from the complex structure. Let $Y$ be a complete $n$-dimensional
algebraic subvariety in a smooth projective algebraic variety $M$.
Let $Y^{sm}$ be the locus of smooth points of the variety Y.
It is possible to apply the results in differential geometry to singular
algebraic varieties because of the following: the variety $Y^{sm}$
defines a $2n$-dimensional cycle in the manifold $M$.

\begin{Th} [see \cite{Griffiths-Harris}]
For every smooth $2n$-form $\tau$ on $M$ its integral along the manifold
$Y^{sm}$ with its complex orientation is well defined. Moreover,
for exact forms $\tau$ this integral vanishes.
\end{Th}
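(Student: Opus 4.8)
The plan is to reduce the statement to the elementary case of a smooth compact oriented manifold by passing to a resolution of singularities of $Y$.

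First I would invoke Hironaka's theorem on resolution of singularities to choose a smooth projective variety $\tilde Y$ together with a proper birational morphism $p : \tilde Y \to Y$ which restricts to an isomorphism over the smooth locus $Y^{sm}$; composing with the inclusion $Y \hookrightarrow M$ gives a morphism $\pi : \tilde Y \to M$, and $\pi^*\tau$ is then a smooth $2n$-form on the compact oriented $2n$-dimensional manifold $\tilde Y$ (with the orientation coming from the complex structure). The key identity to establish is
$$\int_{Y^{sm}} \tau = \int_{\tilde Y} \pi^*\tau .$$
To justify it, note that $U = p^{-1}(Y^{sm}) \subset \tilde Y$ is a Zariski open set whose complement is a proper closed algebraic subset, hence of Lebesgue measure zero, and $p|_U : U \to Y^{sm}$ is an orientation-preserving diffeomorphism; since $\pi^*\tau$ is smooth on the \emph{compact} manifold $\tilde Y$ it is absolutely integrable there, so $\int_{\tilde Y}\pi^*\tau = \int_U \pi^*\tau = \int_{Y^{sm}}\tau$, which in particular shows that the improper integral over $Y^{sm}$ converges absolutely and is well defined. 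Independence of the choice of resolution follows by dominating any two resolutions by a common third and applying the same measure-zero remark to the exceptional loci.

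For the second assertion, suppose $\tau = d\sigma$ for a smooth $(2n-1)$-form $\sigma$ on $M$. Then $\pi^*\tau = d(\pi^*\sigma)$ is exact on $\tilde Y$, and $\tilde Y$ is a compact manifold without boundary, so Stokes' theorem gives $\int_{\tilde Y} d(\pi^*\sigma) = 0$; combined with the identity above this yields $\int_{Y^{sm}}\tau = 0$.

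The main obstacle is precisely the passage from $Y^{sm}$ to the compact model --- i.e.\ checking that neither the singular locus of $Y$ nor the exceptional set of $p$ contributes to the integral and that the integral over $Y^{sm}$ converges at all. With resolution of singularities available this is immediate from the measure-zero observation. If one prefers to avoid Hironaka's theorem, one must instead run a cutoff version of Stokes' theorem directly on $Y^{sm}$ and bound $\int d\chi_\varepsilon \wedge \sigma$ over a shrinking neighborhood of the singular set, which reduces to the standard Lelong-type estimate that the volume of an $\varepsilon$-neighborhood (in $M$) of a subvariety of dimension $< n$ tends to $0$; that volume estimate, together with boundedness of $\sigma$ on the compact $M$, is then the real content.
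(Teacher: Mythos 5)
The paper gives no proof of its own for this statement; it is quoted directly from Griffiths--Harris, and the standard argument there is essentially the one you sketch as an \emph{alternative} in your final paragraph: one runs a cutoff version of Stokes' theorem on $Y^{sm}$ and controls the error term using the fact that the volume (in $M$) of an $\varepsilon$-neighbourhood of the singular locus, a subvariety of complex dimension $<n$, tends to $0$ -- this is the Lelong-type estimate, and it is what makes the current of integration $[Y]$ well defined and closed. Your primary argument, passing to a resolution $p\colon\tilde Y\to Y$ that is an isomorphism over $Y^{sm}$, pulling $\tau$ back to a smooth top form on the compact manifold $\tilde Y$, and observing that the exceptional locus has measure zero, is a correct and in fact cleaner route: convergence and Stokes become trivial on $\tilde Y$, at the cost of invoking Hironaka. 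The GH-style proof buys independence from resolution of singularities and stays entirely within the local theory of analytic sets and currents, which is why it is the one that appears in the cited source; both approaches establish precisely the statement required. One small tightening: the independence-of-resolution remark you add is not actually needed, since the chain $\int_{\tilde Y}\pi^*\tau=\int_{Y^{sm}}\iota^*\tau$ already expresses the answer as a resolution-free quantity.
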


The complex projective space has a standard Kaehler $2$-form. We
recall its definition: take a positive definite hermitian form $H$
on a complex vector space $L$. Then we can define a form
$\tilde{\omega}_H$ on $L\setminus\{0\}$ by the formula
$$\tilde{\omega}_H = \frac{i}{2\pi}\partial\bar{\partial}\log{H}.$$
Let $\pi:L\setminus\{0\}\to \p(L)$ be the canonical
projection. The following is well-known,

\begin{Th}[see \cite{Griffiths-Harris}]
There exists a unique form $\omega_H$ on
$\p(L)$ such that $\tilde{\omega}_H=\pi^*\omega_H$. This form $\omega_H$
is closed and its cohomology class generates the cohomology ring of $\p(L)$.
The integral of $\omega_H$ on any projective line $\p^1\subset\p(L)$ is equal
to $1$. The form $\omega_H$ is the Poincare dual to any hyperplane in $\p(L)$.
\end{Th}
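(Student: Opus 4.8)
The plan is to verify the four assertions in order, using the $\partial\bar\partial$-description of $\tilde\omega_H$ together with a reduction to the case $L=\c^2$ for the normalization and a comparison with the known cohomology ring of projective space for the last two claims.

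\textbf{Existence, uniqueness, and descent.} Since $\pi\colon L\setminus\{0\}\to\p(L)$ is a surjective holomorphic submersion, $\pi^*$ is injective on differential forms, so there is at most one $\omega_H$ with $\pi^*\omega_H=\tilde\omega_H$. To produce one, I would work with local holomorphic sections: over an affine chart $U\subset\p(L)$ choose a holomorphic section $s_U\colon U\to L\setminus\{0\}$ of $\pi$ and set $\omega_H|_U:=s_U^*\tilde\omega_H$. On an overlap $U\cap V$ one has $s_U=g_{UV}\,s_V$ with $g_{UV}$ a nowhere-vanishing holomorphic function, hence $s_U^*\tilde\omega_H-s_V^*\tilde\omega_H=\tfrac{i}{2\pi}\partial\bar\partial\log|g_{UV}|^2=0$, because $\log|g|^2=\log g+\log\bar g$ is locally a sum of a holomorphic and an antiholomorphic function and such functions are killed by $\partial\bar\partial$. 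So the local forms glue to a global smooth $2$-form $\omega_H$ on $\p(L)$; that $\pi^*\omega_H=\tilde\omega_H$ follows since both sides agree after pulling back by each $s_U$ along $\pi$. (Equivalently, one checks directly that $\tilde\omega_H$ is $\c^*$-invariant, using $\log H(\lambda z)=\log|\lambda|^2+\log H(z)$, and horizontal for the fibration, hence basic.)

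\textbf{Closedness.} From $d=\partial+\bar\partial$ and $\partial^2=\bar\partial^2=0$, $\partial\bar\partial+\bar\partial\partial=0$, we get $d(\partial\bar\partial\log H)=\bar\partial\partial\bar\partial\log H=-\partial\bar\partial\bar\partial\log H=0$, so $\tilde\omega_H$ is closed; then $\pi^*(d\omega_H)=d\tilde\omega_H=0$ forces $d\omega_H=0$ by injectivity of $\pi^*$.

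\textbf{Normalization over a line.} Let $\ell=\p(W)\subset\p(L)$ with $W\subset L$ two-dimensional. Restriction commutes with $\partial,\bar\partial$, and $H|_W$ is again positive definite, so $\omega_H|_\ell=\omega_{H|_W}$. Choosing an $H|_W$-orthonormal basis reduces us to $L=\c^2$, $H=|z_0|^2+|z_1|^2$; in the affine coordinate $w=z_1/z_0$ one computes $\omega_H=\tfrac{i}{2\pi}\partial\bar\partial\log(1+|w|^2)=\tfrac{i}{2\pi}\dfrac{dw\wedge d\bar{w}}{(1+|w|^2)^2}$, and passing to polar coordinates $w=re^{i\theta}$ gives $\displaystyle\int_{\p^1}\omega_H=\frac{1}{2\pi}\int_0^{2\pi}d\theta\int_0^{\infty}\frac{2r\,dr}{(1+r^2)^2}=1$, the omitted point at infinity having measure zero.

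\textbf{Cohomology class and Poincar\'e duality.} The cohomology ring $H^*(\p(L);\r)$ is $\r[\eta]/(\eta^{N+1})$ with $N=\dim\p(L)$ and $\eta\in H^2$ the hyperplane class; in particular $H^2(\p(L);\r)\cong\r$ and the pairing of $H^2$ with the class $[\ell]$ of a projective line is nondegenerate. A generic hyperplane meets a generic line in one point, so $\langle\eta,[\ell]\rangle=1$, while the previous step gives $\langle[\omega_H],[\ell]\rangle=1$; hence $[\omega_H]=\eta$. Thus $[\omega_H]$ generates the cohomology ring, is Poincar\'e dual to any hyperplane, and (being the class $\eta$) is independent of $H$.

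I expect the only genuinely delicate point to be the descent of $\tilde\omega_H$ to $\p(L)$; the local-section formulation above circumvents most of the bookkeeping by reducing it to the identity $\partial\bar\partial\log|g|^2=0$. After that, each remaining assertion is either an explicit computation or an appeal to the standard cohomology of projective space.
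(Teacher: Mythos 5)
The paper does not prove this statement; it is cited as a classical fact with a pointer to Griffiths--Harris, so there is no ``paper's proof'' to compare against. Your argument is a correct and self-contained proof of the cited theorem, along standard lines: local holomorphic sections plus $\partial\bar\partial\log|g|^2=0$ for descent, $d=\partial+\bar\partial$ for closedness, restriction to a line and a direct polar-coordinate integral for the normalization, and the structure of $H^*(\p^N)$ for the cohomological assertions. The one place you should slow down is the step ``$\pi^*\omega_H=\tilde\omega_H$ follows since both sides agree after pulling back by each $s_U$ along $\pi$'': pulling back by $s_U$ alone cannot detect vertical directions, since $s_U^*$ is not injective. What actually closes the argument is that $s_U\circ\pi$ acts on $\pi^{-1}(U)$ by $z\mapsto\lambda(z)\,z$ with $\lambda$ a nowhere-vanishing \emph{holomorphic} function, so $(s_U\circ\pi)^*\tilde\omega_H=\frac{i}{2\pi}\partial\bar\partial\bigl(\log|\lambda|^2+\log H\bigr)=\tilde\omega_H$, and simultaneously $(s_U\circ\pi)^*\tilde\omega_H=\pi^*s_U^*\tilde\omega_H=\pi^*\omega_H$; alternatively, your parenthetical remark that $\tilde\omega_H$ is $\c^*$-invariant and horizontal (hence basic) does the job cleanly and could be promoted to the main argument. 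Everything else checks out, including the normalization integral $\int_0^\infty\frac{2r\,dr}{(1+r^2)^2}=1$ and the identification of $[\omega_H]$ with the hyperplane class via pairing against a line.
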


The form $\omega_H$ is called the {\it Kaehler form on $\p(L)$
associated to the Hermitian form $H$ on $L$}. (The Hermitian form $H$
on $L$ also defines the Fubini-Study metric on $\p(L)$ and the form
$\omega_H$ can be recovered uniquely from the Fubini-Study metric.)
For a one-dimensional space $L$ the form $\omega_H$ is equal to
zero as $\p(L)$ consists of a single point.

We now go back to our case of interest. Let $X$ be an irreducible $n$-dimensional
\qp variety. {\it A model of  the variety $X$} is a
complete algebraic variety, which is birationally isomorphic to $X$.
For the topological description of the intersection indices of
$n$-tuples of subspaces in the semigroup $\K$ we will
consider models of the variety $X$. For different $n$-tuples of
elements of the semigroup $\K$ different models may
be needed. In particular, when $X$ is normal and projective, and for all
the subspaces $L \in {\bf K}_{Cart}(X)$ with regular Kodaira map, it
suffices to take $X$ itself as a model.

In general, there exists a common model for all the $n$-tuples of
subspaces chosen from a fixed finite set. We now describe the
construction of this model. Fix a finite sequence $L_1,\ldots,L_k$
of subspaces. For convenience  we allow repetition of subspaces in
the sequence.

Let $\Sigma$ be an admissible set for $X$ and the sequence
$L_1,\ldots, L_k$ of subspaces from the semigroup $\K$. Let $\Phi_{L_i} : X \setminus \Sigma \to
\p(L_i)^*$ be the Kodaira map associated to $L_i$. Let the
\qp variety $X$ be embedded in $\p^N$. Denote the
product $\p(L_1^*)\ldots\times\p(L_k^*)$ by $\p(\bf{L}^*)$. Consider
the mapping $\Phi_{\bf L}:X \setminus \Sigma \to \p(\bf{L}^*)$
which is the product of the Kodaira maps $\Phi_{L_i}$. Let
$\Gamma\subset\p^N\times\p(\bf{L}^*)$ be the graph of this mapping,
i.e. $\Gamma$ is the set of points $(x,y_1,\ldots,y_k)$ where
$x\in X \setminus \Sigma$ and $y_i=\Phi_{L_i}(x)$, $i=1, \ldots, k$. Define
$\overline{\Gamma}$ to be the closure of the graph $\Gamma$ in
$\p^N\times\p(\bf{L}^*)$. Let $\rho:\overline{\Gamma}\to\p^{N}$ be
the restriction of the projection $\p^N\times\p(\bf{L}^*) \to
\p^N$ to the complete subvariety $\overline{\Gamma}$. The mapping
$\rho$ is a birational isomorphism between the complete variety
$\overline{\Gamma}$ and the variety $X$.

\begin{Def}
The complete variety
$\overline{\Gamma}\subset\p^N\times\p(\bf{L}^*)$ is called {\it the
model of the variety $X$ associated to the sequence of spaces
$L_1,\ldots,L_k$}.
\end{Def}

\begin{Lem}
If $X$ is normal and complete and $L_1, \ldots, L_k \in {\bf
K}_{Cart}(X)$ are subspaces with regular Kodaira maps, then the model of
$X$ corresponding to $L_1, \ldots, L_k$ is isomorphic to $X$.
\end{Lem}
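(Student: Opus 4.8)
The plan is to exhibit an explicit morphism $X \to \overline{\Gamma}$ inverse to the birational morphism $\rho: \overline{\Gamma} \to X$, using that the Kodaira maps of $L_1, \ldots, L_k$ are regular on all of $X$.

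First I would record two consequences of the hypotheses. Since $X$ is complete and quasi-projective, it is projective, so the embedding $\iota: X \hookrightarrow \p^N$ used to build the model is a closed immersion. Since each $L_i \in {\bf K}_{Cart}(X)$, the Kodaira map $\Phi_{L_i}: X \to \p(L_i^*)$ is a morphism defined on all of $X$, hence so is the product $\Phi_{\bf L}: X \to \p({\bf L}^*)$; in particular there is no need to delete an admissible set $\Sigma$ in order to make sense of $\Phi_{\bf L}$. I would then form the graph morphism $g := (\iota, \Phi_{\bf L}): X \to \p^N \times \p({\bf L}^*)$. This map is a closed immersion: it factors as the graph morphism $X \to X \times \p({\bf L}^*)$, which is a closed immersion because $\p({\bf L}^*)$ is separated, followed by $\iota \times \id: X \times \p({\bf L}^*) \to \p^N \times \p({\bf L}^*)$, which is a closed immersion as a base change of $\iota$; and a composite of closed immersions is a closed immersion. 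Hence $g$ identifies $X$ with the closed subvariety $g(X) \subset \p^N \times \p({\bf L}^*)$.

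Next I would check that $g(X) = \overline{\Gamma}$. On one hand, $g(X)$ is closed and contains $g(X \setminus \Sigma) = \Gamma$, so $g(X) \supseteq \overline{\Gamma}$. On the other hand, $X \setminus \Sigma$ is dense in $X$ (as $X$ is irreducible and $\Sigma$ a proper closed subset) and $g$ is continuous, so $g(X) \subseteq \overline{g(X \setminus \Sigma)} = \overline{\Gamma}$. Therefore $g(X) = \overline{\Gamma}$, so $g: X \to \overline{\Gamma}$ is an isomorphism. Composing with the projection $\p^N \times \p({\bf L}^*) \to \p^N$ gives $\rho \circ g = \iota$; identifying $X$ with $\iota(X)$, this exhibits $\rho: \overline{\Gamma} \to X$ as the inverse of $g$, so $\overline{\Gamma} \cong X$.

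The only step that genuinely uses completeness --- and is therefore the one to be careful about --- is the identity $g(X) = \overline{\Gamma}$: the closedness of the image $g(X)$ is equivalent to $\iota$ being a closed immersion, i.e.\ to $X$ being complete, and without this the conclusion fails (for $X = \mathbb{A}^1$ and $L = \langle 1, x \rangle$, whose Kodaira map already embeds $X$ into $\p^1$, the model $\overline{\Gamma}$ comes out isomorphic to $\p^1$). Normality of $X$ plays no real role in this particular argument; it is included because the lemma records the ``good case'' of the model construction treated in this section. Everything else is routine: checking that $g$ is a closed immersion, and observing that the possibly nonempty admissible set $\Sigma$ (it must contain the singular locus of $X$) causes no trouble, since only the density of $X \setminus \Sigma$ is used.
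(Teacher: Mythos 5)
Your proof is correct and takes essentially the same route as the paper's (which simply observes that the regular Kodaira maps let one define the graph over all of $X$, and completeness then forces that graph to be closed, so $\overline{\Gamma}$ is the full graph). Your write-up is a careful expansion of that one-line argument, and it rightly flags a detail the paper elides: since an admissible set $\Sigma$ must contain the singular locus, it can be non-empty even under the hypotheses, and what is actually used is only the density of $X \setminus \Sigma$.
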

\begin{proof}
Under the assumptions of the lemma the graph $\Gamma$ is defined
over the whole $X$. Since $X$ is complete, $\Gamma$ is
closed and thus $\overline{\Gamma} = \Gamma$ is isomorphic to $X$.
\end{proof}

\begin{Rem}
Using intersection theory of Cartier divisors on a model
$\overline{\Gamma}$ of a variety $X$ corresponding to a sequence
$L_1, \ldots, L_k$ of subspaces of rational functions, one can prove
all the properties of the intersection index discussed before, for the $n$-tuples
$L_{i_1}, \ldots, L_{i_n}$ taken from the sequence $L_1, \ldots,
L_k$.
\end{Rem}

In the variety $\overline{\Gamma}$ there is a Zariski open subset
$W=\rho^{-1}(X\setminus\Sigma)$ isomorphic to
$X\setminus\Sigma$. If we identify $W$ with
$X\setminus\Sigma$ using the map $\rho$, then the
restriction of the projection $\pi_i: \p^N \times \p({\bf L}^*) \to \p(L_i^*)$ to $W$ is the Kodaira
map $\Phi_{L_i}$.

Choose an arbitrary collection of $n$ subspaces from the sequence
$L_1,\ldots,L_k$. Without loss of generality, assume that it is
$L_1,\ldots,L_n$.

Below we will give an integral formula for the intersection index
(see Corollary \ref{cor-int-index-integral}).

Recall basic facts from the topology of the product of projective
spaces $M = \p^N\times\p(\bf{L}^*)$. Since the Kaehler form
$\omega_{H_i}$ on $\p(L_i^*)$ is the Poincare dual to the
hypersurface $P_i\subset\p(L_i^*)$, the form $\pi_i^*\omega_{H_i}$
on $M$ is Poincare dual to the hypersurface $\pi^{-1}P_i$, and the
intersection $Z=\pi_1^{-1}P_1\cap\ldots\cap\pi_n^{-1}P_n$ is
the Poincare dual to the form $\pi_1^*\omega_{H_1}\wedge\ldots\wedge
\pi_n^*\omega_{H_n}$. It means that for an arbitrarily small tubular
neighborhood $U$ of the manifold $Z$ equipped with a smooth
projection $g:U\to Z$ there is a closed $2n$-form $\Omega_U$ on the
manifold $M$ having the following properties: 1) the support of the
form $\Omega_U$ is contained in the tubular neighborhood $U_Z$. 2)
The form $\Omega_U$ is cohomologous to the form
$\pi_1^*\omega_{H_1}\wedge\ldots\wedge \pi_n^*\omega_{H_n}$. 3) Let
$g:U\to Z$ be the smooth projection in the definition of the
tubular neighborhood $U$, and let $F_a=g^{-1}a$ be the fiber of this
projection over an arbitrary point $a\in Z$, oriented
compatible with the natural coorientation of the submanifold $Z$ in
$M$. Then $\int_{F_a}\Omega_U=1$.

\begin{Th} \label{th-integral-int-index}
Let $L_1,\ldots,L_n$ be an $n$-tuple of subspaces taken from a sequence
of $k \geq n$ subspaces from $\K$. Let $\overline{\Gamma}$ be a model of the variety
$X$ associated to this sequence. Suppose that the cycle
$Z=\pi_1^{-1}P_1\cap\ldots\cap\pi_n^{-1}P_n$ does not intersect
the singular set $O$ of the variety $\overline{\Gamma}$, and intersects
its non-singular part $Y=\overline{\Gamma}\setminus O$
transversally. Then the number $\#(Z \cap Y)$ of intersection points of
the cycle $Z$ with the manifold $Y$ is
$$\int_Y{\pi_1^*\omega_{H_1}\wedge\ldots\wedge
\pi_n^*\omega_{H_n}}$$.
\end{Th}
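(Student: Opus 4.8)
The plan is to deduce the statement from the de Rham–theoretic description of intersection numbers on the smooth variety $Y = \overline{\Gamma}\setminus O$, using the fact, recalled just before the theorem, that $\pi_1^*\omega_{H_1}\wedge\cdots\wedge\pi_n^*\omega_{H_n}$ is Poincar\'e dual to the cycle $Z = \pi_1^{-1}P_1\cap\cdots\cap\pi_n^{-1}P_n$. Since $Z$ by hypothesis avoids the singular locus $O$ and meets $Y$ transversally, the intersection $Z\cap Y$ is a finite set of points, and each contributes $+1$ to the count because all the orientations in sight are the canonical complex ones (the complex structure on $M = \p^N\times\p({\bf L}^*)$ induces compatible orientations on $Y$, on $Z$, and on the normal bundle of $Z$); so $\#(Z\cap Y)$ is already a well-defined non-negative integer, and the content of the theorem is that this integer equals the integral.

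First I would fix an arbitrarily small tubular neighborhood $U$ of $Z$ in $M$ together with its smooth retraction $g:U\to Z$, and invoke the closed $2n$-form $\Omega_U$ whose three properties are listed in the excerpt: its support lies in $U$, it is cohomologous on $M$ to $\pi_1^*\omega_{H_1}\wedge\cdots\wedge\pi_n^*\omega_{H_n}$, and $\int_{F_a}\Omega_U = 1$ on each oriented fiber $F_a = g^{-1}(a)$. Next I would apply the theorem recalled from \cite{Griffiths-Harris}: for any smooth $2n$-form on $M$, its integral over $Y^{sm}$ (with complex orientation) is well-defined and vanishes on exact forms. Applied to the difference $\Omega_U - \pi_1^*\omega_{H_1}\wedge\cdots\wedge\pi_n^*\omega_{H_n}$, which is exact on $M$, this gives
$$\int_Y \pi_1^*\omega_{H_1}\wedge\cdots\wedge\pi_n^*\omega_{H_n} = \int_Y \Omega_U.$$
Here I am using that $Y$, being the nonsingular part of a complete subvariety of $M$, carries a fundamental $2n$-cycle to which that theorem applies.

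It then remains to evaluate $\int_Y\Omega_U$. Since $\Omega_U$ is supported in the tubular neighborhood $U$ and $Z$ meets $Y$ transversally in finitely many points $a_1,\ldots,a_m$ (where $m = \#(Z\cap Y)$), by shrinking $U$ I may assume $U\cap Y$ is a disjoint union of small pieces, one around each $a_j$, each of which is carried diffeomorphically by $g$ onto a neighborhood of $a_j$ in $Z$; transversality plus the compatibility of complex orientations means each such piece is homologous, rel boundary, to the fiber $F_{a_j}$ with its natural coorientation, hence contributes $\int_{F_{a_j}}\Omega_U = 1$. Summing over $j$ yields $\int_Y\Omega_U = m = \#(Z\cap Y)$, which combined with the displayed equality proves the theorem. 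The main obstacle I anticipate is the careful bookkeeping of orientations in this last step — verifying that transversality of $Z$ with the \emph{complex} submanifold $Y$ forces each local intersection multiplicity to be exactly $+1$ and that the local piece of $Y$ near $a_j$ is positively homologous to the cooriented fiber $F_{a_j}$; this is where the positivity of complex intersections is doing the real work, and it is the point on which the identification of $\#(Z\cap Y)$ with the integral genuinely rests.
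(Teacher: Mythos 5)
Your proposal is correct and follows essentially the same route as the paper's proof: both replace the wedge product of pulled-back K\"ahler forms by the cohomologous form $\Omega_U$ supported near $Z$ (justified via the Griffiths--Harris result on integrating over the smooth locus $Y$), then localize to the finitely many transversal intersection points of $Z$ with $Y$ and identify each local contribution with $\int_{F_a}\Omega_U=1$. The paper phrases the last step by saying that the component $Y_a$ of $Y\cap\overline{U}$ represents the same relative class in $(\overline{U},\partial\overline{U})$ as the fiber $F_a$, which is exactly the orientation bookkeeping you flag as the crux, so your account matches theirs in substance while being slightly more explicit about where the positivity of complex intersections enters.
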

\begin{proof}
The variety $Z$ does not intersect the closed set $O$, and hence the set
$Z\cap Y$ does not contain the limit points of $O$. It follows from the
transversality condition that the set $Z\cap Y$ is finite. Choose a
sufficiently small tubular neighborhood $U$ of the cycle $Z$ so that for
every point $a\in Z\cap Y$ the connected component $Y_a$ of the
intersection of $Y$ with the closure of the tubular neighborhood
$\overline{U}$ defines the same relative cycle in the homology group
of the pair $(\overline{U},\partial\overline{U})$ as the fiber
$F_a$. Then $$\int_Y{\pi_1^*\omega_{H_1}\wedge\ldots\wedge
\pi_n^*\omega_{H_n}}=\int_Y{\Omega_U}=\sum_{a\in Z\cap
Y}{\int_{Y_a}{\Omega_U}}= \#(Z \cap Y),$$ as required.
\end{proof}

This theorem not only gives an alternative proof of the well-definedness of
the definition of the intersection index $[L_1,\ldots,L_n]$, but also gives an
explicit formula for it:
\begin{Cor} \label{cor-int-index-integral}
The birationally invariant index $[L_1,\ldots,L_n]$ is
well-defined and is equal to
\begin{eqnarray*}
[L_1,\ldots,L_n] &=& \int_Y{\pi_1^*\omega_{H_1}\wedge\ldots\wedge
\pi_n^*\omega_{H_n}}, \cr
&=& \int_{X\setminus\Sigma}{(\pi_1\circ\rho^{-1})^*
\omega_{H_1}\wedge\ldots\wedge (\pi_n\circ\rho^{-1})^*\omega_{H_n}}. \cr
\end{eqnarray*}
\end{Cor}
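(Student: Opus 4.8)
The plan is to deduce the corollary from Theorem \ref{th-integral-int-index}, by checking that its hypotheses hold for a generic choice of the hyperplanes $P_i\subset\p(L_i^*)$ and that for such a choice the topological intersection number $\#(Z\cap Y)$ coincides with the birationally invariant index $[L_1,\ldots,L_n]$. Throughout, fix the model $\overline{\Gamma}\subset\p^N\times\p({\bf L}^*)$ associated to $L_1,\ldots,L_n$, let $O$ be its singular locus, $Y=\overline{\Gamma}\setminus O$ its smooth part, and $W=\rho^{-1}(X\setminus\Sigma)$. Since $\Sigma$ is admissible, $X\setminus\Sigma$ is smooth, so $W$ lies in $Y$; and since $\overline{\Gamma}$ is irreducible of dimension $n$ with $W$ dense, both $O$ and $Y\setminus W$ are closed subvarieties of dimension $<n$.

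First I would handle genericity. A hyperplane $P_i\subset\p(L_i^*)$ is the zero locus of a linear functional on $L_i^*$, i.e. of a line $[h_i]\subset L_i$, and $\pi_i^{-1}(P_i)\cap\overline{\Gamma}$ is a member of the base-point-free linear system $\pi_i^*|\mathcal{O}(1)|$ on $\overline{\Gamma}$ (base-point-free because $\pi_i\colon\overline{\Gamma}\to\p(L_i^*)$ is a morphism). Applying Bertini's theorem iteratively on the smooth variety $Y$, for $(P_1,\ldots,P_n)$ in a non-empty Zariski open subset of $\prod_i\p(L_i)$ the cycle $Z\cap Y$ is finite and transversal; and since cutting an $n$-dimensional variety with $n$ generic base-point-free divisors leaves nothing on a subvariety of dimension $<n$, after shrinking the open set we may also assume $Z$ is disjoint from $O$ and $Z\cap Y\subset W$. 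For such $(P_i)$ the hypotheses of Theorem \ref{th-integral-int-index} are met, and it gives
\[
\int_Y\pi_1^*\omega_{H_1}\wedge\cdots\wedge\pi_n^*\omega_{H_n}=\#(Z\cap Y)=\#(Z\cap W).
\]

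Next I would identify $\#(Z\cap W)$ with $[L_1,\ldots,L_n]$. Under the isomorphism $\rho\colon W\to X\setminus\Sigma$ one has $\pi_i|_W=\Phi_{L_i}$, so $Z\cap W$ is identified with the zero set in $X\setminus\Sigma$ of the system $h_1=\cdots=h_n=0$, where $h_i\in L_i$ spans $P_i$, and transversality of $Z$ to $Y$ along $W$ is precisely non-degeneracy of this system. Since the number and non-degeneracy of solutions depend only on the lines $[h_i]\in\p(L_i)$, Proposition \ref{3.4} applied to the smooth variety $X\setminus\Sigma$ (on which $L_1,\ldots,L_n$ belong to ${\bf K}_{reg}(X\setminus\Sigma)$) shows that for $([h_i])$ in a non-empty Zariski open subset of $\prod_i\p(L_i)$ the system has exactly $[L_1,\ldots,L_n]$ non-degenerate roots. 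Intersecting this open set with the one from the previous paragraph, we obtain a single choice of $(P_i)$ for which $\#(Z\cap W)=[L_1,\ldots,L_n]$ and Theorem \ref{th-integral-int-index} applies; hence $\int_Y\bigwedge_i\pi_i^*\omega_{H_i}=[L_1,\ldots,L_n]$. This proves the first formula and re-proves well-definedness, since the left-hand side depends only on the cohomology classes of the $\omega_{H_i}$, which are the fixed generators. The second formula follows because $Y\setminus W$ has real codimension at least $2$ in $Y$ and so is negligible for integration: the integral over $Y$ equals the integral over $W$, and transporting by $\rho^{-1}$ turns each $\pi_i|_W$ into $\pi_i\circ\rho^{-1}$.

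I expect the main obstacle to be the genericity bookkeeping: one must verify that the conditions ``$Z$ transversal to $Y$'', ``$Z$ disjoint from $O$'', ``$Z\cap Y\subset W$'', and ``the associated system attains its maximal, non-degenerate root count on $X\setminus\Sigma$'' each hold on a non-empty Zariski open (or at least dense constructible) subset of the parameter space $\prod_i\p(L_i)$, so that a common admissible choice exists. This is exactly the point where one leans on Bertini applied to the smooth locus $Y$, on dimension counts to absorb the singular locus and boundary of the possibly non-normal model $\overline{\Gamma}$, and on Proposition \ref{3.4} to bridge the purely topological intersection number with the algebraically defined intersection index.
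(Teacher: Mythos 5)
Your proposal is correct and follows essentially the same route as the paper: apply Bertini on the model $\overline{\Gamma}$ to find generic hyperplanes satisfying the hypotheses of Theorem \ref{th-integral-int-index}, identify $Z\cap W$ with the non-degenerate zero set of a generic system $h_1=\cdots=h_n=0$ on $X\setminus\Sigma$, and pass from $Y$ to $W$ by a measure-zero argument. The main difference is that you make the bridge to $[L_1,\ldots,L_n]$ explicit by intersecting the Bertini-generic locus with the open set from Proposition \ref{3.4}, whereas the paper leaves this step implicit; this is a welcome clarification but not a genuinely different approach.
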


\begin{proof}
Every function $f_i\in L_i$ defines a linear functional on the
vector space $L_i^*$, and moreover, a nonzero $f_i$ defines a hyperplane
$L_{f_i}$ in $L_i^*$ on which this functional vanishes. Let
$P_{f_i} \subset \p(L_i^*)$ be the projectivization of $L_{f_i}$. By
the Bertini theorem there is a Zariski open subset $U\subset
L_1\times\ldots\times L_n$ consisting of
${\bf f}=(f_1,\ldots,f_n)$ such that the hyperplanes $P_{f_i} \subset \p(L_i^*)$ have the
following properties: 1) the cycle
$Z=\pi_1^{-1}P_{f_1}\cap\ldots\cap\pi_n^{-1}P_{f_n}$ satisfies the
conditions of Theorem \ref{th-integral-int-index}, 2) all
the intersection points of the cycle $Z$ with the variety
$\overline{\Gamma}$ belong to the set
$\rho^{-1}(X\setminus\Sigma)\subset\overline{\Gamma}$.
According to the theorem, for $\bf{f}\in U$ the system
$f_1=\ldots=f_n=0$ in $X\setminus\Sigma$ has only
non-degenerate roots. Their number does not depend on the choice of
the set of equations $\bf{f}\in U$ and is equal to
$\int_Y{\pi_1^*\omega_{H_1}\wedge\ldots\wedge \pi_n^*\omega_{H_n}}$.
The latter integral is equal to
$\int_{\rho^{-1}(X\setminus\Sigma)}{\pi_1^*\omega_{H_1}\wedge\ldots\wedge
\pi_n^*\omega_{H_n}}$ (and hence the last integral is well-defined).
This proves the corollary.
\end{proof}

The above formula for the index provides a different proof of its
multi-linearity. Let $L_1,L_2$ be two spaces of regular functions on
some algebraic variety and let $L_1 L_2$ be their product. The space
$L_1 L_2$ is a factor space of the tensor product $L_1\otimes L_2$
and hence the space $(L_1 L_2)^*$ is a subspace of $(L_1\otimes
L_2)^*$. Denote by $\tau:(L_1 L_2)^*\to (L_1\otimes L_2)^*$ the
corresponding embedding. Let $H_1,H_2$ be positive-definite
Hermitian forms on the spaces $L_1^*, L_2^*$ and $H_1\otimes H_2$ be
the positive-definite form on $(L_1\otimes L_2)^*$ whose value on
the product $l_1\otimes l_2$ of two vectors $l_1,l_2$ is
$H_1(l_1)H_2(l_2)$.

\begin{Def}
We call the form $H_1 H_2$ on the space $(L_1 L_2)^*$, induced by the form
$H_1\otimes H_2$ and the embedding $\tau:(L_1 L_2)^*\to  (L_1\otimes
L_2)^*$, the {\it product of the forms $H_1$ and $H_2$}.
\end{Def}

\begin{Lem}
Let $L_1,L_2\in \K$. Let $H_1$, $H_2$ be positive-definite Hermitian
forms on the spaces $L_1^*$, $L_2^*$ respectively, and let $H_1 H_2$ be their product
defined on $(L_1 L_2)^*$. Then
$$\tilde{K}_{L_1}^*\omega_{H_1}+\tilde{K}_{L_2}^*\omega_{H_2}=\tilde{K}_{L_1
L_2}^*\omega_{H_1 H_2}.$$
\end{Lem}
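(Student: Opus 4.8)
The plan is to realize the Kodaira map $\tilde{K}_{L_1L_2}$ as a factor of the product map $(\tilde{K}_{L_1},\tilde{K}_{L_2})$ through the Segre embedding, and then to reduce the asserted identity to two elementary facts about Kaehler forms: their additivity under the Segre embedding, and their naturality under restriction to a projective-linear subspace. Throughout I write $\tilde{K}_L$ for the Kodaira map $\Phi_L\colon X\dashrightarrow\p(L^*)$ and $\tilde{\Phi}_L\colon X\to L^*$ for its canonical lift. First I would fix a subvariety $\Sigma\subset X$ admissible for $L_1$ and $L_2$; since a product $fg$ with $f\in L_1$, $g\in L_2$ is regular and (for suitable $f,g$) nonvanishing at every point outside the base loci and polar divisors of $L_1$ and $L_2$, the same $\Sigma$ is admissible for $L_1L_2$, and all three Kodaira maps are regular on $X\setminus\Sigma$; the claimed equality is to be read as an identity of $2$-forms there (and then transported to any model of $X$ via the birational isomorphism $\rho$). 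Recall the surjection $\pi\colon L_1\otimes L_2\to L_1L_2$ and its dual embedding $\tau\colon (L_1L_2)^*\hookrightarrow (L_1\otimes L_2)^*$, where we identify $(L_1\otimes L_2)^*=L_1^*\otimes L_2^*$; by construction $H_1H_2=\tau^*(H_1\otimes H_2)$.

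Next I would set up the diagram. Let $\Psi=(\tilde{K}_{L_1},\tilde{K}_{L_2})\colon X\setminus\Sigma\to \p(L_1^*)\times\p(L_2^*)$, with coordinate projections $p_1,p_2$; let $s\colon \p(L_1^*)\times\p(L_2^*)\hookrightarrow \p(L_1^*\otimes L_2^*)$, $([\alpha],[\beta])\mapsto[\alpha\otimes\beta]$, be the Segre embedding; and let $\sigma=\p(\tau)\colon \p((L_1L_2)^*)\hookrightarrow \p(L_1^*\otimes L_2^*)$ be the linear embedding induced by $\tau$. The key geometric step is to verify that on $X\setminus\Sigma$ one has $s\circ\Psi=\sigma\circ \tilde{K}_{L_1L_2}$. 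This is a pointwise check: $s(\Psi(x))$ is the line through $\tilde{\Phi}_{L_1}(x)\otimes\tilde{\Phi}_{L_2}(x)\in L_1^*\otimes L_2^*$, and evaluating this tensor on $f\otimes g\in L_1\otimes L_2$ gives $f(x)g(x)=\bigl(\pi(f\otimes g)\bigr)(x)$, so $\tilde{\Phi}_{L_1}(x)\otimes\tilde{\Phi}_{L_2}(x)=\tau\bigl(\tilde{\Phi}_{L_1L_2}(x)\bigr)$; since $\Sigma$ contains the base locus of $L_1L_2$ we have $\tilde{\Phi}_{L_1L_2}(x)\neq 0$, and $\tau$ is injective, so this vector is nonzero and indeed represents $\sigma\bigl(\tilde{K}_{L_1L_2}(x)\bigr)$. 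In particular $s\circ\Psi$ factors through the projective-linear subspace $\sigma\bigl(\p((L_1L_2)^*)\bigr)$, which is what makes the last pullback below legitimate.

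Then I would invoke the two form identities. (a) For the Segre embedding, $s^*\omega_{H_1\otimes H_2}=p_1^*\omega_{H_1}+p_2^*\omega_{H_2}$: pulling $\tilde{\omega}_{H_1\otimes H_2}$ back along the cone-level map $(\alpha,\beta)\mapsto\alpha\otimes\beta$ and using $(H_1\otimes H_2)(\alpha\otimes\beta)=H_1(\alpha)H_2(\beta)$ turns $\log(H_1\otimes H_2)$ into $\log H_1+\log H_2$; applying $\frac{i}{2\pi}\partial\bar{\partial}$ and descending via the tautological projections gives the claim. (b) For the linear embedding $\sigma$, $\sigma^*\omega_{H_1\otimes H_2}=\omega_{H_1H_2}$: on the cone over $\tau\bigl((L_1L_2)^*\bigr)$ the form $\tilde{\omega}_{H_1\otimes H_2}$ restricts to $\frac{i}{2\pi}\partial\bar{\partial}\log\bigl((H_1\otimes H_2)\circ\tau\bigr)=\frac{i}{2\pi}\partial\bar{\partial}\log(H_1H_2)=\tilde{\omega}_{H_1H_2}$, which descends to the stated equality. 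Combining these with the diagram, the argument concludes via
\begin{align*}
\tilde{K}_{L_1}^*\omega_{H_1}+\tilde{K}_{L_2}^*\omega_{H_2}
&=\Psi^*\bigl(p_1^*\omega_{H_1}+p_2^*\omega_{H_2}\bigr)
=\Psi^*s^*\omega_{H_1\otimes H_2}\\
&=(s\circ\Psi)^*\omega_{H_1\otimes H_2}
=(\sigma\circ\tilde{K}_{L_1L_2})^*\omega_{H_1\otimes H_2}\\
&=\tilde{K}_{L_1L_2}^*\,\sigma^*\omega_{H_1\otimes H_2}
=\tilde{K}_{L_1L_2}^*\omega_{H_1H_2}.
\end{align*}

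I do not expect a serious obstacle; the only delicate points are bookkeeping rather than depth. The one thing to get exactly right is the pointwise identification $\tilde{\Phi}_{L_1}(x)\otimes\tilde{\Phi}_{L_2}(x)=\tau\bigl(\tilde{\Phi}_{L_1L_2}(x)\bigr)$ together with the observation that $s\circ\Psi$ genuinely lands in the linear subspace $\sigma\bigl(\p((L_1L_2)^*)\bigr)$, so that restriction/pullback along $\sigma$ makes sense; once this is in place, identity (b) is essentially a tautology because $H_1H_2$ was \emph{defined} as the restriction of $H_1\otimes H_2$. Alternatively one can bypass the Segre picture and compute directly on $X\setminus\Sigma$, where $\tilde{K}_{L_1L_2}^*\omega_{H_1H_2}$ equals $\frac{i}{2\pi}\partial\bar{\partial}\log\bigl(H_1(\tilde{\Phi}_{L_1}(x))\,H_2(\tilde{\Phi}_{L_2}(x))\bigr)$, which splits into the sum of the corresponding expressions for $L_1$ and $L_2$; but the Segre formulation makes transparent why the product Hermitian form $H_1H_2$ is the correct object on $(L_1L_2)^*$.
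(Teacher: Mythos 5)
Your proposal is correct. The Segre-embedding route you adopt as your main argument is a more structured packaging of what the paper does in two lines: the paper simply records the identity $\tau\circ\Phi_{L_1L_2}=\Phi_{L_1}\otimes\Phi_{L_2}$, deduces pointwise that $H_1H_2(\tilde\Phi_{L_1L_2}(x))=H_1(\tilde\Phi_{L_1}(x))\,H_2(\tilde\Phi_{L_2}(x))$, and applies $\frac{i}{2\pi}\partial\bar\partial\log$ to both sides. The ``alternative'' you sketch at the end, computing directly on $X\setminus\Sigma$, is in fact the paper's entire proof. What your main route buys is conceptual clarity: by factoring the product Kodaira map through $\p(L_1^*)\times\p(L_2^*)\hookrightarrow\p(L_1^*\otimes L_2^*)$ and then restricting to $\sigma(\p((L_1L_2)^*))$, you separate the two independent ingredients, additivity of Fubini--Study forms under the Segre embedding, item (a), and naturality under linear restriction, item (b), and you make visible why $H_1H_2$ was defined as the restriction of $H_1\otimes H_2$. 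The cost is that you must set up the diagram and verify that $s\circ\Psi$ actually lands in the linear subspace $\sigma(\p((L_1L_2)^*))$; the paper avoids this bookkeeping by never leaving $X\setminus\Sigma$. Both arguments hinge on exactly the same pointwise computation, so the difference is presentation rather than mathematical content.
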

\begin{proof}
Since $\tau\circ \Phi_{L_1 L_2}=\Phi_{L_1}\otimes \Phi_{L_2}$, for
every $x\in X$ we have $H_1 H_2(\Phi_{L_1
L_2}(x))=H_1(\Phi_{L_1}(x))H_2(\Phi_{L_2}(x))$. It follows that
$$\frac{i}{2\pi}\partial\bar{\partial}\log{H_1 H_2(\Phi_{L_1 L_2}(x))}=
\frac{i}{2\pi}\partial\bar{\partial}\log{H_1(\Phi_{L_1}(x))}+\frac{i}{2\pi}
\partial\bar{\partial}\log{H_2(\Phi_{L_2}(x))}.$$
\end{proof}

\begin{Cor}
The index $[L_1,\ldots,L_n]$ is multi-linear. That is, if $L_1=L'_1 L''_2$, then
$$[L_1,L_2,\ldots,L_n]=[L'_1,L_2,\ldots,L_n]+[L''_1,L_2,\ldots,L_n].$$
\end{Cor}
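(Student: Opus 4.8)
The plan is to deduce this corollary, as an alternative to the earlier argument (Theorem \ref{th-multi-lin-invariant-index}), directly from the integral formula for the intersection index (Corollary \ref{cor-int-index-integral}) together with the additivity of the pulled-back Kaehler forms under products of subspaces established in the preceding Lemma. Write $L_1 = L_1'L_1''$, and apply the construction of a model to the finite sequence $L_1', L_1'', L_1, L_2, \ldots, L_n$ (repetitions are allowed), obtaining a complete variety $\overline{\Gamma}$ birational to $X$. Fix an admissible set $\Sigma$ for $X$ and all these subspaces, and identify $X \setminus \Sigma$ with the Zariski open subset $W \subset \overline{\Gamma}$ on which all the relevant Kodaira maps are regular and coincide with the restrictions of the coordinate projections.

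First I would fix positive-definite Hermitian forms $H_1'$ on $(L_1')^*$ and $H_1''$ on $(L_1'')^*$, and arbitrary positive-definite Hermitian forms $H_2, \ldots, H_n$ on $L_2^*, \ldots, L_n^*$, and set $H_1 = H_1'H_1''$, the product form on $(L_1'L_1'')^* = L_1^*$. By Corollary \ref{cor-int-index-integral}, each of the three indices in the statement is an integral over $X \setminus \Sigma$ of a wedge of pulled-back Kaehler forms, e.g.
\[
[L_1, L_2, \ldots, L_n] = \int_{X \setminus \Sigma} \Phi_{L_1}^*\omega_{H_1} \wedge \Phi_{L_2}^*\omega_{H_2} \wedge \cdots \wedge \Phi_{L_n}^*\omega_{H_n},
\]
and likewise with $L_1'$ or $L_1''$ in the first slot, using the forms $H_1'$ or $H_1''$; here one uses that the integral does not depend on the choice of Hermitian forms, since the cohomology class of $\omega_H$ is independent of $H$.

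Next I would invoke the preceding Lemma, which gives the pointwise identity $\Phi_{L_1}^*\omega_{H_1} = \Phi_{L_1'}^*\omega_{H_1'} + \Phi_{L_1''}^*\omega_{H_1''}$ on $X \setminus \Sigma$ (here $\Phi_{L_1} = \Phi_{L_1'} \otimes \Phi_{L_1''}$ composed with the embedding $\tau$). Substituting this into the integrand and expanding the exterior product by bilinearity of $\wedge$, the integral splits as a sum of two integrals, which are precisely the integral expressions for $[L_1', L_2, \ldots, L_n]$ and $[L_1'', L_2, \ldots, L_n]$. This gives the claimed equality $[L_1, L_2, \ldots, L_n] = [L_1', L_2, \ldots, L_n] + [L_1'', L_2, \ldots, L_n]$.

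The only point needing care — the main obstacle, such as it is — is the bookkeeping guaranteeing that a single model $\overline{\Gamma}$ and a single admissible $\Sigma$ serve simultaneously for $L_1'$, $L_1''$, and $L_1 = L_1'L_1''$, so that all the relevant Kodaira maps are genuine morphisms on the common open set $W \cong X \setminus \Sigma$ and the three integral representations live over the same domain of integration. This is exactly what the construction of the model associated to a finite sequence of subspaces provides, so the rest is the formal manipulation above.
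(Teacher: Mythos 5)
Your proposal is correct and takes essentially the same approach as the paper: build a model $\overline{\Gamma}$ for the sequence $L_1', L_1'', L_1, L_2, \ldots, L_n$, take Hermitian forms with $H_1 = H_1'H_1''$, invoke the Lemma to split $\pi_1^*\omega_{H_1}$ as $\pi_1'^*\omega_{H_1'} + \pi_1''^*\omega_{H_1''}$, and apply the integral formula. The only cosmetic difference is that you integrate over $X \setminus \Sigma$ while the paper integrates over the smooth locus $Y$ of $\overline{\Gamma}$, but Corollary \ref{cor-int-index-integral} already identifies the two.
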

\begin{proof}
Build a model $\overline{\Gamma}$ of the variety $X$ associated to
the sequence of subspaces $L'_1$, $L''_1$, $L_1, \ldots, L_n$. Let
$H'_1$, $H''_1$, $H_2, \ldots, H_n$ be Hermitian forms on the spaces
${L'_1}^*, {L''_1}^*, L_2^*,\ldots, L_n^*$ respectively.
Also let $\pi'_1$, $\pi''_1$ denote the projections
corresponding to the subspaces $L'_1$, $L''_1$ respectively.
Let $H_1=H'_1H''_1$ be the product form on $L_1^*$.
Denote by $Y$ the set of non-singular points on the
variety $\overline{\Gamma}$. It follows from the lemma that the
restrictions of the forms ${\pi_1}^*\omega_{H_1}$ and
${\pi'_1}^*\omega_{H'_1}+ {\pi''_1}^*\omega_{H''_1}$ to the
manifold $Y$ coincide. Hence the restrictions to $Y$ of the products
of these forms by the form
$\pi_2^*\omega_{H_2}\wedge\ldots\wedge\pi_n^*\omega_{H_n}$ also
coincide. The claim now follows from the integral formula for the
index.
\end{proof}

We will need the following corollary from Hodge theory. Let $Y$ be a
smooth projective surface. Consider the symmetric form
$B(\omega_1,\omega_2)=\int_Y{\omega_1\wedge\omega_2},$
defined on the space of real-valued smooth $(1,1)$-forms. Also,
in the space $H^{1,1}(Y)$, consider the non-negative cone $C$ consisting of
the forms $\omega$ whose value at each point of the surface $Y$ on each
pair of the vectors $(v,iv)$ is non-negative.

\begin{Cor}[Corollary of the Hodge index theorem] \label{cor-of-Hodge-index}
For any two forms $\omega_1,\omega_2\in C$ the following inequality holds:
\begin{equation} \label{eqn-Hodge-inequ}
B^2(\omega_1,\omega_2)\geq B(\omega_1,\omega_1) B(\omega_2,\omega_2).
\end{equation}
\end{Cor}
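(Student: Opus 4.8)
The plan is to derive the inequality \eqref{eqn-Hodge-inequ} from the Hodge index theorem. Recall that for a smooth projective surface $Y$ this theorem asserts that the symmetric form $B(\omega_1,\omega_2)=\int_Y\omega_1\wedge\omega_2$ on the real vector space $H^{1,1}(Y)$ has signature $(1,\dim H^{1,1}(Y)-1)$; equivalently, if a class $h\in H^{1,1}(Y)$ satisfies $B(h,h)>0$, then $B$ restricted to the hyperplane $h^{\perp}=\{x\in H^{1,1}(Y)\mid B(x,h)=0\}$ is negative (semi-)definite. This is part of standard Hodge theory (see \cite{Griffiths-Harris}).

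First I would record the elementary fact that a class in $C$ has non-negative self-intersection. Let $\omega\in C$. At a point $p\in Y$ the condition defining $C$ (non-negativity of $\omega$ on all pairs $(v,iv)$) says precisely that the Hermitian matrix associated to $\omega_p$ is positive semi-definite; choosing a unitary coframe $dz_1,dz_2$ in which it is diagonal, we get $\omega_p=\lambda_1\,dx_1\wedge dy_1+\lambda_2\,dx_2\wedge dy_2$ with $\lambda_1,\lambda_2\geq 0$, so $\omega_p\wedge\omega_p=2\lambda_1\lambda_2\,dx_1\wedge dy_1\wedge dx_2\wedge dy_2$ is a non-negative multiple of the positively oriented volume form. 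Hence $\omega\wedge\omega$ is pointwise a non-negative multiple of the volume form, and $B(\omega,\omega)=\int_Y\omega\wedge\omega\geq 0$. In particular $B(\omega_1,\omega_1)\geq 0$ and $B(\omega_2,\omega_2)\geq 0$.

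Now the proof splits into cases. If $B(\omega_1,\omega_1)=0$ or $B(\omega_2,\omega_2)=0$, then $B(\omega_1,\omega_1)B(\omega_2,\omega_2)=0\leq B^2(\omega_1,\omega_2)$ and we are done. Otherwise, by the symmetry of the inequality we may assume $B(\omega_1,\omega_1)>0$. Then $B$ is non-degenerate on the line $\r\omega_1$, so $H^{1,1}(Y)=\r\omega_1\oplus\omega_1^{\perp}$; write $\omega_2=t\omega_1+\eta$ with $\eta\in\omega_1^{\perp}$, which forces $t=B(\omega_1,\omega_2)/B(\omega_1,\omega_1)$. Expanding, $B(\omega_2,\omega_2)=t^2B(\omega_1,\omega_1)+B(\eta,\eta)$, and by the Hodge index theorem $B(\eta,\eta)\leq 0$. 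Multiplying $B(\omega_2,\omega_2)\leq t^2B(\omega_1,\omega_1)$ by $B(\omega_1,\omega_1)>0$ yields $B(\omega_1,\omega_1)B(\omega_2,\omega_2)\leq t^2B(\omega_1,\omega_1)^2=B^2(\omega_1,\omega_2)$, as required.

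I do not expect a serious obstacle: the two ingredients are the Hodge index theorem (quoted from \cite{Griffiths-Harris}) and the one-line pointwise computation of $\omega\wedge\omega$. The only point demanding care is to keep the argument non-circular — the bound $B(\omega_i,\omega_i)\geq 0$ must be obtained from the pointwise description of the cone $C$, not from the index theorem — and, relatedly, to dispose of the degenerate cases $B(\omega_i,\omega_i)=0$ separately, since the orthogonal-decomposition step genuinely uses $B(\omega_1,\omega_1)>0$.
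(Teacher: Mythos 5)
Your proof is correct and takes essentially the same route as the paper: both rest on the Hodge index theorem together with the pointwise non-negativity of $\omega\wedge\omega$ for $\omega\in C$, and both then run the standard Cauchy--Schwarz argument for a form of Lorentzian signature. The paper phrases this last step via the discriminant of the quadratic $t\mapsto B(t\omega_1+\omega_2,\,t\omega_1+\omega_2)$ rather than your explicit orthogonal decomposition, but the two are algebraically equivalent; your version merely handles the degenerate cases $B(\omega_i,\omega_i)=0$ a bit more explicitly.
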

\begin{proof}
According to the Hodge index theorem, the quadratic form $B(\omega,\omega)$,
defined on the $(1,1)$-component of the cohomology of the
surface $Y$, is positive-definite on a one-dimensional subspace and
negative-definite on its orthogonal compliment. The positive closed
$(1,1)$-forms lie in a connected component $B(\omega,\omega) \geq
0$. Now (as in the proof of the classical Cauchy-Schwartz
inequality) consider the line $\ell = \{t\omega_1 + \omega_2 \mid t
\in \r \}$ passing through $\omega_2$ and parallel to $\omega_1$.
The line $\ell$ does not completely lie outside the positive cone
$C$ and hence the quadratic polynomial $Q(t) = B(t\omega_1+\omega_2,
t\omega_1+\omega_2)$ attains both positive and negative (or possibly
zero) values. It follows that the discriminant of $Q$ is
non-negative. But discriminant of $Q$ is equal to $4(B(\omega_1,
\omega_2)^2- B(\omega_1,\omega_1)B(\omega_2,\omega_2))$ which proves
the inequality (\ref{eqn-Hodge-inequ}).
\end{proof}

Finally, we recall the following classical theorem about the resolution
of singularities for surfaces:

\begin{Th}[Resolution of singularities for surfaces] \label{th-res-sing-surface}
For every complete projective surface $\overline{\Gamma}$ there
exists a complete nonsingular projective surface $Y$ and a morphism
$g:Y \to \overline{\Gamma}$ which is a birational
isomorphism between $Y$ and $\overline{\Gamma}$.
\end{Th}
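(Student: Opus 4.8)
The plan is to realize $Y$ as the output of an explicit algorithm that alternates normalization with blowing up singular points, and then to prove that the algorithm terminates; the only substantive content is the termination, since each individual step is elementary and keeps us inside the category of complete projective surfaces.

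First I would replace $\overline{\Gamma}$ by its normalization $\nu\colon\widetilde{\Gamma}\to\overline{\Gamma}$. Since $\overline{\Gamma}$ is projective, so is $\widetilde{\Gamma}$, and $\nu$ is a finite birational morphism, so composing with it at the very end costs nothing. A normal surface is regular in codimension one, hence its singular locus is a finite set of closed points; this already reduces the theorem to resolving finitely many isolated normal surface singularities. Next, given a normal projective surface $X$ with singular points $p_1,\dots,p_r$, I would form the blow-up $\sigma\colon\mathrm{Bl}_{\{p_1,\dots,p_r\}}X\to X$ --- again a projective variety admitting a proper birational morphism to $X$ --- and then normalize the result, obtaining a normal projective surface $X_1$ together with a proper birational morphism $X_1\to X$. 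Iterating this produces a tower $\cdots\to X_2\to X_1\to X$; if some $X_m$ happens to be smooth, composing all the maps in the tower with $\nu$ gives the desired morphism $g\colon Y\to\overline{\Gamma}$, and we are done.

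The hard part is proving that the tower stabilizes at a smooth surface after finitely many steps. For this one must attach to each normal surface singularity a non-negative integer invariant that cannot decrease forever under the operation ``blow up the singular point and normalize'', and from which smoothness can be detected. The naive candidate is the multiplicity of the local ring, but the multiplicity need not strictly drop in a single blow-up, so finer bookkeeping is required. There are several classical routes: Zariski's argument reducing the global statement to the local uniformization theorem together with a patching argument over the space of valuations; a discrepancy or geometric-genus count; or the Hirzebruch--Jung method. I would take the last as the most self-contained: locally present $X$ as a finite cover of a smooth surface branched along a curve $B$, invoke the elementary embedded resolution of the plane curve $B$ (repeatedly blowing up points turns any curve in a smooth surface into a normal crossing divisor, by a multiplicity argument), and then apply Abhyankar's lemma to conclude that the normalized pullback acquires only cyclic quotient, i.e.\ toric, singularities; these are resolved by an explicit toric continued-fraction computation, and patching the local resolutions yields $Y$.
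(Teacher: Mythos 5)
The paper does not prove this theorem at all: it is explicitly recalled as a classical fact (``Finally, we recall the following classical theorem\dots'') and immediately used in the proof of Theorem \ref{th-Hodege-appendix}. So there is no ``paper's proof'' to compare against; you are supplying a proof where the authors simply cite the literature.

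Your sketch of the Hirzebruch--Jung route is a legitimate and essentially standard way to prove the theorem, and the scaffolding (normalize first to reduce to isolated normal singularities, note that normalization and blow-up preserve projectivity and properness, reduce to a local problem) is correct. Two remarks, though, on precision. First, you set up the tower $\cdots\to X_2\to X_1\to X$ by ``blow up singular points, then normalize,'' declare that the hard part is proving this tower stabilizes, and then prove something else: the Hirzebruch--Jung argument (branched cover of a smooth surface, embedded resolution of the branch curve $B$, Abhyankar's lemma, toric resolution of cyclic quotients) is not a termination proof for that tower, it is a different algorithm. As written, you never actually show the tower terminates; you abandon it. This is fine if you simply drop the tower and present Hirzebruch--Jung as your construction, but the logical flow currently promises one thing and delivers another. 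Second, the ``patching'' of the local Hirzebruch--Jung resolutions deserves a sentence: what makes it work is that a normal surface singularity has a unique minimal resolution (any resolution factors through it), so the local minimal resolutions over disjoint singular points glue canonically with the smooth locus; without invoking that uniqueness, ``patching the local resolutions'' is not automatic. Neither point is a fatal gap --- both are standard --- but in a proof whose ``only substantive content is the termination,'' the logic around exactly that point should be tight.
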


\begin{Th} [A version of Hodge inequality] \label{th-Hodege-appendix}
Let $X$ be a quasi-projective irreducible surface and $L_1,L_2\in
\K$. Then the following inequality holds: $$[L_1,L_2]^2
\geq [L_1,L_1][L_2,L_2].$$
\end{Th}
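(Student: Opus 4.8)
The plan is to transport everything onto a smooth projective surface and then quote the Hodge-theoretic inequality of Corollary~\ref{cor-of-Hodge-index} together with the integral formula for the intersection index from Corollary~\ref{cor-int-index-integral}.

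First I would choose an admissible set $\Sigma$ for $X$ and the pair $L_1, L_2$, and form the model $\overline{\Gamma} \subset \p^N \times \p({\bf L}^*)$ of $X$ associated to the sequence $L_1, L_2$, with its birational projection $\rho : \overline{\Gamma} \to X$ and the projections $\pi_1, \pi_2$ onto $\p(L_1^*), \p(L_2^*)$, which restrict to the Kodaira maps on $W = \rho^{-1}(X \setminus \Sigma)$. Since $\overline{\Gamma}$ is a complete projective surface, by the resolution of singularities for surfaces (Theorem~\ref{th-res-sing-surface}) there is a smooth complete projective surface $Y$ and a birational morphism $g : Y \to \overline{\Gamma}$. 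Fix positive-definite Hermitian forms $H_i$ on $L_i^*$ and set $\eta_i = (\pi_i \circ g)^* \omega_{H_i}$, a smooth real closed $(1,1)$-form on $Y$, for $i = 1, 2$.

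The two points to verify are: (a) each $\eta_i$ lies in the non-negative cone $C$ of Corollary~\ref{cor-of-Hodge-index}, and (b) the index $[L_i, L_j]$ equals $B(\eta_i, \eta_j) = \int_Y \eta_i \wedge \eta_j$. For (a), the Kaehler form $\omega_{H_i}$ is semi-positive on $\p(L_i^*)$, and semi-positivity of a $(1,1)$-form is preserved under pullback along a holomorphic map (the differential being complex linear), so $\eta_i$ is pointwise semi-positive on $Y$. For (b), Corollary~\ref{cor-int-index-integral} gives $[L_i, L_j] = \int_{X \setminus \Sigma} (\pi_i \circ \rho^{-1})^* \omega_{H_i} \wedge (\pi_j \circ \rho^{-1})^* \omega_{H_j}$, which is $\int_{\overline{\Gamma} \setminus O} \pi_i^* \omega_{H_i} \wedge \pi_j^* \omega_{H_j}$, where $O$ is the singular locus of $\overline{\Gamma}$; since $g$ is a birational morphism, $g^{-1}(O)$ is a proper closed subvariety of $Y$ (hence of measure zero) and $g$ is an isomorphism over a dense open subset, so this integral equals $\int_Y \eta_i \wedge \eta_j$. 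Moreover $B(\eta_i, \eta_j)$ depends only on the de Rham classes of $\eta_i, \eta_j$ by Stokes' theorem on the compact surface $Y$, so the whole construction is independent of the choices of $Y$ and of the $H_i$.

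Granting (a) and (b), the desired inequality $[L_1, L_2]^2 \geq [L_1, L_1][L_2, L_2]$ is exactly $B(\eta_1, \eta_2)^2 \geq B(\eta_1, \eta_1)\, B(\eta_2, \eta_2)$, which is Corollary~\ref{cor-of-Hodge-index} applied to $\eta_1, \eta_2 \in C$. I expect the main obstacle to be the bookkeeping in step (b): checking that passing from $X \setminus \Sigma$ through the model $\overline{\Gamma}$ to its resolution $Y$ does not change the integral, i.e. that the relevant birational modifications alter the spaces only on lower-dimensional (measure-zero) subsets, and that in each case the integrand is a genuine global smooth form on the surface in question, so that Stokes' theorem and the change-of-variables formula legitimately apply.
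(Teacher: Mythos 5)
Your proposal matches the paper's own argument essentially step for step: form the complete model $\overline{\Gamma}$ associated to $L_1, L_2$, pass to a resolution $g : Y \to \overline{\Gamma}$, pull back the Kaehler forms to get semi-positive closed $(1,1)$-forms on $Y$, identify the intersection indices with the corresponding integrals via Corollary~\ref{cor-int-index-integral}, and invoke Corollary~\ref{cor-of-Hodge-index}. The only cosmetic difference is that the paper introduces the pullback subspaces $F = (\rho\circ g)^*L_1$, $G = (\rho\circ g)^*L_2$ on $Y$ and appeals to birational invariance of the index, whereas you chase the integral formula directly through the change of variables; the substance is identical and your bookkeeping in step (b) is correct.
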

\begin{proof}
Let $\overline{\Gamma}$ be a complete model for the surface $X$
associated to the subspaces $L_1,L_2$. Consider the nonsingular
projective surface $Y$ with the morphism $g:Y\to
\overline{\Gamma}$ from Theorem \ref{th-res-sing-surface}.
Consider the subspaces $F=(\rho\circ g)^*L_1$ and
$G=(\rho\circ g)^*L_2$ of rational functions on the surface $Y$.
We have Kodaira maps $\Phi_F$ and $\Phi_G$ corresponding to these subspaces.
Fix Kaehler forms $\omega_{H_1}$, $\omega_{H_2}$ on the spaces
$\p(F^*)$, $\p(G^*)$, then we can associate
non-negative smooth closed $(1,1)$-forms $\omega_1=(\pi_1\circ
g)^*\omega_{H_1}$ and $\omega_2=(\pi_2\circ g)^*\omega_{H_2}$ (on the
surface $Y$) to the subspaces $F$ and $G$ respectively. From the integral formula for the index we have
$[F,F]=\int_Y{\omega_1\wedge\omega_1}$,$[F,G]=\int_Y{\omega_1\wedge\omega_2}$
and $[G,G]=\int_Y{\omega_2\wedge\omega_2}$. Corollary \ref{cor-of-Hodge-index}
now implies that $[F,G]^2\geq[F,F][G,G]$. But
$[F,G]=[L_1,L_2]$,$[F,F]=[L_1,L_1]$,$[G,G]=[L_2,L_2]$,
so the theorem is proved.
\end{proof}

From the version of Hodge inequality proved above we get an
analogue of the Alexandrov-Fenchel inequality for the intersection index (see
\cite{Burago-Zalgaller}).


\begin{thebibliography}{99}
\bibitem[Bernstein]{Bernstein} Bernstein, D. N.
{\it The number of roots of a system of equations}.
English translation: Functional Anal. Appl. 9 (1975), no. 3,
183--185 (1976).

\bibitem[BFJ]{French-paper} Bouksom, S.; Favre, C.; Jonsson, M.
{\it Differentiability of volumes of divisors and a problem of Teissier}.
J. Algebraic Geom. 18 (2009), 279-308.

\bibitem[Brion]{Brion} Brion, M.
{\it Groupe de Picard et nombres caracteristiques des varietes
spheriques}. [Picard group and characteristic numbers of spherical
varieties] Duke Math. J. 58 (1989), no. 2, 397-424.

\bibitem[Burda-Khovanskii]{Burda-Khovanskii} Burda, Y.; Khovanskii, A. G.
{Degree of a rational mapping and theorems of Sturm and Tarski}.
Preprint (2008).

\bibitem[Dries]{Dries} van den Dries, L.
{\it Tame topology and o-minimal structures}. London Mathematical
Society Lecture Note Series, 248. Cambridge University Press,
Cambridge, 1998.

\bibitem[Fulton]{Fulton} Fulton, W. {\it Intersection theory}.
Second Edition, A Series of Modern Surveys in Mathematics,
Springer-Verlag, Berlin, 1998.



\bibitem[Griffiths-Harris]{Griffiths-Harris} Griffiths, P.; Harris, J.
{\it Principles of algebraic geometry}.
Pure and Applied Mathematics. Wiley-Interscience [John Wiley \& Sons], New York, 1978.


\bibitem[Hartshorne]{Hartshorne} Hartshorne, R. {\it Algebraic geometry}.
Graduate Texts in Mathematics, No. 52. Springer-Verlag, New York-Heidelberg, 1977.

\bibitem[Kaveh-Khovanskii1]{Askold-Kiumars-arXiv} Kaveh, K.; Khovanskii, A. G.
{\it Convex bodies and algebraic equations on affine varieties}. Preprint: arXiv:0804.4095v1
A short version with title {\it Algebraic equations and convex bodies}
to appear in {\it Perspectives in Analysis, Topology and Geometry},
Birkh\"aser series {\sf Progress in Mathematics}.

\bibitem[Kaveh-Khovanskii2]{Askold-Kiumars-Newton-Okounkov}
Kaveh, K.; Khovanskii, A. G. {\it Newton-Okounkov bodies, semigroups of integral points,
graded algebras and intersection theory}. {\sf arXiv:0904.3350}.

\bibitem[Kazarnovskii]{Kazarnovskii} Kazarnovskii, B. {\it Newton polyhedra and the
Bezout formula for matrix-valued functions of finite dimensional
representations}. Functional Analysis and its
applications, v. 21, no. 4, 73--74 (1987).



\bibitem[Khovanskii1]{Burago-Zalgaller} Appendix 3 in: Burago, Yu. D.; Zalgaller, V. A.
{\it Geometric inequalities}. Translated from the Russian by A. B. Sosinski\u\i.
Grundlehren der Mathematischen Wissenschaften, 285.
Springer Series in Soviet Mathematics (1988).

\bibitem[Khovanskii2]{Askold-Hilbert-poly}
Khovanskii, A.G. {\it Newton polyhedron, Hilbert polynomial and sums of finite sets}.
(Russian)  Funktsional. Anal. i Prilozhen.  26  (1992),  no. 4, 57--63, 96;
translation in  Funct. Anal. Appl.  26  (1992),  no. 4, 276--281.







\bibitem[Kiritchenko]{Kiritchenko} Kiritchenko, V.
{\it Chern classes of reductive groups and an adjunction formula}. Ann.
Inst. Fourier (Grenoble) 56 (2006), no. 4, 1225�1256.

\bibitem[Kushnirenko]{Kushnirenko} Ku\v{s}nirenko, A. G.
{\it Polyedres de Newton et nombres de Milnor}. (French)
Invent. Math. 32 (1976), no. 1, 1--31.

\bibitem[Lazarsfeld]{Lazarsfeld} Lazarsfeld, R.
{\it Positivity in algebraic geometry. I. Classical setting: line
bundles and linear series}. Ergebnisse der Mathematik und ihrer
Grenzgebiete. 3. Folge. A Series of Modern Surveys in Mathematics,
48. Springer-Verlag, Berlin, 2004.

\bibitem[Lazarsfeld-Mustata]{Lazarsfeld-Mustata}
Lazarsfeld, R.; Mustata, M. {\it Convex bodies associated with
linear series}. Preprint: arXiv:0805.4559.


\bibitem[Okounkov1]{Okounkov-Brunn-Minkowski} Okounkov, A.
{\it Brunn-Minkowski inequality for multiplicities}.
Invent. Math. 125 (1996), no. 3, 405--411.


\bibitem[Okounkov2]{Okounkov-log-concave} Okounkov, A.
{\it Why would multiplicities be log-concave?}
The orbit method in geometry and physics (Marseille, 2000),
329--347, Progr. Math., 213, Birkha"user Boston, Boston, MA, 2003.


\bibitem[Teissier]{Teissier} Teissier, B.
{\it Du theoreme de l'index de Hodge aux inegalites isoperimetriques}.
C. R. Acad. Sci. Paris Ser. A-B 288 (1979), no. 4, A287--A289.

\bibitem[Whitney]{Whitney} Whitney, H.
{\it Elementary structure of real algebraic varieties}.
Ann. of Math. (2) 66 1957 545--556.

\bibitem[Yuan]{Yuan} Yuan, X. {\it Volumes of arithmetic line bundles}.
Preprint: arXiv:0811.0226v1.

\bibitem[Zariski-Samuel]{Zariski} Zariski, O.; Samuel, P.
{\it Commutative algebra}. Vol. II. Reprint of the 1960 edition.
Graduate Texts in Mathematics, Vol. 29.
\end{thebibliography}
\end{document}